\newtheorem{theorem}{Theorem}[section]
\newtheorem{corollary}[theorem]{Corollary}
\newtheorem{lemma}[theorem]{Lemma}
\newtheorem{proposition}[theorem]{Proposition}
\theoremstyle{definition}
\newtheorem{definition}[theorem]{Definition}
\newtheorem{remark}[theorem]{Remark}
\newcommand{\Tr}{{\rm Tr}}
\newcommand{\spn}{{\rm span}}
\newcommand{\Aut}{{\rm Aut}}
\newcommand{\Ad}{{\rm Ad}\,}
\newcommand{\id}{{\rm id}}
\newcommand{\red}{{\lambda}}
\newcommand{\cB}{{\mathcal B}}
\newcommand{\cH}{{\mathcal H}}
\newcommand{\cK}{{\mathcal K}}
\newcommand{\cU}{{\mathcal U}}
\newcommand{\cO}{{\mathcal O}}
\newcommand{\cP}{{\mathcal P}}
\newcommand{\Cb}{{\mathbb C}}
\newcommand{\Zb}{{\mathbb Z}}
\newcommand{\Rb}{{\mathbb R}}
\newcommand{\Nb}{{\mathbb N}}
\newcommand{\tr}{{\rm tr}}
\newcommand{\op}{{\rm op}}
\newcommand{\mx}{{\rm max}}
\newcommand{\Rep}{{\rm Rep}}
\newcommand{\Irr}{{\rm Irr}}
\newcommand{\orb}{{\mathcal O}}
\newcommand{\Act}{{\rm Act}}
\newcommand{\unit}{{\mathbf 1}}
\newcommand{\iso}{{\rm i}}
\newcommand{\WM}{{\rm WM}}
\newcommand{\FWM}{{\rm FWM}}
\newcommand{\FE}{{\rm FE}}
\newcommand{\Erg}{{\rm Erg}}
\newcommand{\SL}{{\rm SL}}
\newcommand{\Fr}{{\rm Fr}}
\begin{document}

\title[Turbulence]{Turbulence, representations, and trace-preserving actions}

\author{David Kerr}
\author{Hanfeng Li}
\author{Mika\"el Pichot}
\address{\hskip-\parindent
David Kerr, Department of Mathematics, Texas A{\&}M University,
College Station TX 77843-3368, U.S.A.}
\email{kerr@math.tamu.edu}

\address{\hskip-\parindent
Hanfeng Li, Department of Mathematics, SUNY at Buffalo,
Buffalo NY 14260-2900, U.S.A.}
\email{hfli@math.buffalo.edu}

\address{\hskip-\parindent
Mika\"el Pichot, Department of Mathematical Sciences,
University of Tokyo, 3-8-1 Komaba, Tokyo 153-8914, Japan}
\email{pichot@ms.u-tokyo.ac.jp}

\date{August 11, 2008}

\begin{abstract}
We establish criteria for turbulence in certain spaces of $C^*$-algebra representations
and apply this to the problem of
nonclassifiability by countable structures for group actions on a standard atomless
probability space $(X,\mu )$ and on the hyperfinite II$_1$ factor $R$.
We also prove that the conjugacy action
on the space of free actions of a countably infinite amenable group on $R$ is turbulent,
and that the conjugacy action on the space of ergodic
measure-preserving flows on $(X,\mu )$ is generically turbulent.
\end{abstract}

\maketitle

\section{Introduction}

Descriptive set theory provides a natural framework for the study of the complexity of classification
problems in analysis and dynamics \cite{Kechris,COER}. Often one has a collection of objects that can be viewed
as elements in a Polish space $X$ and an equivalence relation $E$ on $X$ encoding the isomorphism
relation between the objects. Consider for example the set of unitary operators on a separable Hilbert space with the
strong operator topology or the set of measure-preserving transformations of a standard probability
space with the weak topology, each under the relation of conjugacy. We may then attempt to gauge
the complexity of $E$ by the way it relates descriptively to other equivalence relations. Given another
equivalence relation $F$ on a standard Borel space $Y$, one says that $E$ is {\it Borel reducible} to $F$
if there is a Borel map $\theta : X\to Y$ such that, for all $x_1 , x_2 \in X$, $x_1 Ex_2$ if and only if
$\theta (x_1 )F\theta (x_2 )$. If we can Borel reduce $E$ to a relation on objects which are in some sense
better understood, we may reasonably claim to have a classification theory.

The relation $E$ is said to be {\it smooth} if it can be Borel reduced to equality on $\Rb$, i.e.,
if we can assign numerical invariants in a Borel manner. By a theorem of Glimm,
the space of irreducible representations of a separable $C^*$-algebra $A$ is smooth
precisely when $A$ is type I (see Section~6.8 of \cite{Ped}). The theorem of Ornstein asserting that entropy
is a complete invariant for Bernoulli shifts provides another example of smoothness \cite{Orn}.
To show that a Borel equivalence relation $E$ on $X$ is not smooth, it suffices to demonstrate the existence
of a Borel probability measure on $X$ which is ergodic (i.e., every invariant Borel set has measure $0$ or $1$)
and is zero on every equivalence class. The relation $E_0$ of tail equivalence
on $\{ 0,1 \}^\Nb$ satisfies this proper ergodicity condition in a prototypical way, and
indeed when $E$ is Borel the continuous embeddability of $E_0$ into $E$ is a universal obstruction to
smoothness \cite{HKL}. There is also a topological version of the proper ergodicity
obstruction to smoothness via Baire category in the case that $E$ arises as the orbit equivalence
relation of the continuous action of a Polish group on $X$, namely that every equivalence class
is both dense and meager (see Section~3.1 of \cite{COER}).

At a higher level of complexity is the notion of classification by countable structures,
which means that $E$ can be Borel reduced to the isomorphism relation on the space of
countable structures of some countable language as implemented by the logic action of the
infinite permutation group $S_\infty$ with its unique Polish topology \cite[Defn.\ 2.38]{COER}.
Equivalently, $E$ can be Borel reduced to the orbit equivalence relation
of a Borel action of $S_\infty$ on a Polish space \cite[Sect.\ 2.7]{BecKec}.
Non-smooth examples of this are the Halmos-von Neumann
classification of discrete spectrum transformations by their sets of eigenvalues \cite{HvN} and
the Giordano-Putnam-Skau classification of minimal homeomorphisms of the Cantor set up
to strong orbit equivalence by countable ordered Abelian groups \cite{GPS}.
Note that the isomorphism relation on any type of countable algebraic structure
can be encoded as a continuous $S_\infty$-action
on a Polish space (see Example~2 in \cite{FW}).

In analogy with the topological proper ergodicity obstruction to smoothness, Hjorth developed the
notion of turbulence as a means for demonstrating nonclassifiability by countable structures \cite{COER}.
Let $G$ be a Polish group acting continuously on a Polish space $X$. For an $x\in X$ and
open sets $U\subseteq X$ and $V\subseteq G$ containing $x$ and $e$, respectively, we define
the local orbit $\cO (x,U,V)$ as the set of all $y\in U$ for which
there are $g_1 , g_2 , \dots , g_n \in V$ such that
$g_k g_{k-1} \cdots g_1 x \in U$ for each $k=1,\dots , n-1$ and $g_n g_{n-1} \cdots g_1 x = y$.
A point $x\in X$ is {\it turbulent} if for all nonempty open sets $U\subseteq X$ and $V\subseteq G$
containing $x$ and $e$, respectively, the closure of $\cO (x,U,V)$ has nonempty interior.
The action is {\it turbulent} if every orbit is dense and meager and every point is turbulent.
Section~3.2 of \cite{COER} shows that if the action of $G$ on $X$
is turbulent then whenever $F$ is an equivalence relation
arising from a continuous action of $S_\infty$ on a Polish space $Y$
and $\theta : X\to Y$ is a Baire measurable function such that $x_1 Ex_2$ implies $\theta (x_1 )F\theta (x_2 )$,
there exists a comeager set $C\subseteq X$ such that $\theta (x_1 )F\theta (x_2 )$ for all $x_1 , x_2 \in C$.
As a consequence the orbit equivalence relation on $X$ does not admit classification by countable structures.
In fact to obtain this conclusion it suffices to show that the action is {\it generically turbulent},
which can be expressed by saying that some orbit is dense, every orbit is meager,
and some point is turbulent (see Definiton~3.20 and Theorem~3.21 in \cite{COER}).
By Theorem~3.21 of \cite{COER}, if the action of $G$ on $X$ is
generically turbulent then there is a $G$-invariant dense $G_\delta$ subset of $X$ on which the action
is turbulent.

Turbulence has now been established in several situations.
Hjorth showed in \cite{NSIDGR} that if $G$ is a countably infinite
group which is not a finite extension of an Abelian group (which in this case is equivalent to $G$ not being
type I by a result of Thoma) then the space of irreducible representations of $G$ on a separable infinite-dimensional
Hilbert space $\cH$ under the conjugation action of the unitary group $\cU (\cH )$ admits an invariant $G_\delta$
subset on which the action is turbulent. Hjorth's argument yields the same conclusion more generally for
the space of irreducible representations of any separable non-type I
$C^*$-algebra on $\cH$. Within the type I realm, Kechris and Sofronidis established generic turbulence
for the conjugation actions of $\cU (\cH )$ on itself and on the space of self-adjoint operators
of norm at most one with the strong topology \cite{KS}.

Suppose now that $G$ is a countably infinite group and consider the Polish space\linebreak 
$\Act (G,X,\mu )$ of actions
of $G$ by measure-preserving transformations on a standard atomless probability space $(X,\mu )$
under the weak topology, with the conjugation action of $\Aut (X,\mu ) = \Act (\Zb ,X,\mu )$. Hjorth constructed
a turbulent action which Borel reduces to the conjugacy relation on the space of ergodic automorphisms
in $\Aut (G,X,\mu )$ \cite{OIMPT} and used turbulence
in spaces of irreducible representations to show nonclassifiability by countable structures for the subspace of
free weakly mixing actions
when $G$ is not a finite extension of an Abelian group (see Theorem~13.7 in \cite{Kechris}).
Foreman and Weiss proved
that the action of $\Aut (X,\mu )$ on the space of free ergodic actions in
$\Act (G,X,\mu )$ is turbulent when $G$ is amenable, using entropy
and disjointness to obtain the meagerness of orbits and the Rokhlin lemma and orbit equivalence to
verify that every point is turbulent \cite{FW}. Free weakly mixing actions of any countably infinite $G$
considered up to unitary conjugacy also do not admit classification by countable structures
\cite[Thm.\ 13.8]{Kechris}.

One of the main goals of the present paper is to develop a general spectral approach to the identification
of turbulent behaviour in spaces of representations and actions.
We prove that, for a separable $C^*$-algebra
$A$ and a separable infinite-dimensional Hilbert space $\cH$, the action of $\cU (\cH )$ on the Polish space
of faithful essential nondegenerate representations of $A$ on $\cH$ has the property that every point is turbulent
and has dense orbit, while the meagerness of all orbits is equivalent to the isolated points
of the spectrum $\hat{A}$ not being dense, so that the action is turbulent precisely in this case.
The action of $\cU (\cH )$ on the Polish space
of all nondegenerate representations of $A$ on $\cH$ is turbulent precisely when $A$ is simple and
not isomorphic to the compact operators on some Hilbert space. Furthermore, the orbit equivalence relation
on the space of nondegenerate representations does not admit classification by countable structures
as soon as $\hat{A}$ is uncountable (if $\hat{A}$ is
countable then the classification of nondegenerate representations on $\cH$ is a matter of counting
multiplicities of irreducible subrepresentations and hence is smooth).

This spectral picture leads in particular to a unified proof of nonclassifiability by countable
structures for free weakly mixing actions of countably infinite $G$ that does not rely on the
type I/non-type I dichotomy, and also allows us to extend the conclusion to
weakly mixing actions of many nondiscrete $G$ of type I, such as $\Rb^d$ and $\SL (2,\Rb )$.
We show moreover that the same nonclassifiability statements hold if we replace $(X,\mu )$
with the hyperfinite II$_1$ factor $R$.
What is of particular interest about the noncommutative context is the fact that,
under amenability assumptions, actions on a factor can be classified up
to cocycle conjugacy by cohomological invariants. For actions of countable amenable groups on $R$
this was done by Ocneanu \cite[Thm.\ 2.6]{Ocneanu}, extending the fundamental work of Connes
on single automorphisms \cite{OCC}. For finite groups one can go further and
produce a classification up to conjugacy, as was done by Connes in the periodic case \cite{PA} and Jones
in general \cite{Jones}.

In the case that the acting group $G$ is countably infinite and amenable, we prove that
the action of the automorphism group of $R$ on the space of free $G$-actions on $R$ is
turbulent. To obtain the meagerness of orbits we follow the idea of Foreman and Weiss
of using entropy and disjointness, although in the noncommutative situation a different
technical perspective is required. We show for general second countable locally compact $G$
that there exists a turbulent point with dense orbit, and deduce from this that every point
is turbulent when $G$ is countably infinite and amenable by applying Ocneanu's result
that any two free actions are cocycle conjugate in this case, with bounds on the cocycle
\cite[Thm.\ 1.4]{Ocneanu}.
Our method for demonstrating the existence of a turbulent point with dense orbit also
works in the commutative situation, yielding a proof that works equally well for nondiscrete $G$
and does not involve orbit equivalence (compare \cite{FW} and Section~5 in \cite{Kechris}).
Using this we deduce that the action of $\Aut (X,\mu )$ on the space of ergodic
measure-preserving flows on $(X,\mu )$ is generically turbulent.

The paper contains five sections beyond the introduction.
Section~2 contains results on turbulence in spaces of $C^*$-algebra representations, while Section~3 discusses
the ramifications of these for group representations. In Section~4 we discuss freeness and weak mixing
and establish our nonclassifiability results
for actions based on the spectral analysis of Section~2. Section~5 contains the proof of turbulence in
the space of free actions of a countably infinite amenable group on $R$. Finally, in
Section~6 we show generic turbulence in the space of ergodic measure-preserving flows.
\medskip

\noindent{\it Acknowledgements.} D.K. was partially supported by NSF grant DMS-0600907.
H.L. was partially supported by NSF grant DMS-0701414.
M.P. was supported by the EPDI and a JSPS fellowship
for European researchers. M.P. is grateful to the Max-Planck
Institut f\"ur Mathematik for hospitality and to Yasuyuki Kawahigashi
for hosting his stay at the University of Tokyo. The initial stages of this
work were carried out during a visit of M.P. to Texas A\&M University in July 2007.


\section{Representations of $C^*$-algebras}\label{S-rep}

Let $A$ be a separable $C^*$-algebra and $\cH$ a separable infinite-dimensional Hilbert space.
A representation $\pi : A \to \cB (\cH )$ is said to be {\it essential} if
$\pi (A) \cap \cK (\cH ) = \{ 0 \}$, where $\cK (\cH )$ denotes the $C^*$-algebra of
compact operators on $\cH$.
We say that $\pi$ is {\it nondegenerate} if for every nonzero
vector $\xi\in\cH$ there is an $a\in A$ such that $\pi (a)\xi \neq 0$. If
$\{ h_\eta \}_\eta$ is an approximate unit for $A$, then $\pi$ is nondegenerate if and only if
$\pi (h_\eta )$ tends to the identity operator on $\cH$ in the strong operator topology.
In particular, if $A$ is unital then $\pi$ is nondegenerate if and only if it is unital.
By Voiculescu's theorem, any two faithful essential nondegenerate representations $\pi_1$ and $\pi_2$
of $A$ on separable infinite-dimensional Hilbert spaces $\cH_1$ and $\cH_2$, respectively, are
approximately unitarily equivalent in the sense that there exists a sequence of unitary operators
$U_n : \cH_1 \to \cH_2$ such that
\[ \lim_{n\to\infty} \big\| U_n \pi_1 (a) U_n^{-1} - \pi_2 (a) \big\| = 0 \]
for all $a\in A$, and every representation of
$A$ is approximately unitarily equivalent to a direct sum of
irreducible representations \cite{NCWvNT}. (see also \cite{Dav,QD}).

We write $\Rep (A,\cH )$ for the Polish space of all nondegenerate representations of $A$ on $\cH$
whose topology has as a basis the sets
\[ Y_{\pi ,F, \Omega ,\varepsilon} =
\{ \rho\in\Rep (A,\cH ) : | \langle (\rho (a) - \pi (a))\xi , \zeta\rangle | < \varepsilon
\text{ for all } a\in F \text{ and } \xi , \zeta \in\Omega \} \]
where $\pi\in\Rep (G,\cH )$, $F$ is a finite subset of $A$, $\Omega$ is a
finite subset of $\cH$, and $\varepsilon > 0$.
Sets of the form
\[ \{ \rho\in\Rep (A,\cH ) : \| (\rho (a) - \pi (a))\xi \| < \varepsilon
\text{ for all } a\in F \text{ and } \xi \in\Omega \} \]
with the same type of $\pi$, $F$, $\Omega$, and $\varepsilon$ also form a basis for the topology.
We write $\cU (\cH )$ for the group of unitary operators on $\cH$ equipped with the relative
strong operator (equivalently, relative weak operator) topology, under which it is a Polish
group. We will be concerned with the continuous action $(U,\pi ) \mapsto \Ad U \circ\pi$ of
$\cU (\cH )$ on $\Rep (A,\cH )$.

The spectrum $\hat{A}$ of $A$ is defined as the set of unitary equivalence
classes of irreducible representations of $A$ equipped with the topology
under which the canonical map from $\hat{A}$ onto the primitive ideal space of $A$
with the Jacobson topology is open and continuous \cite[Sect.\ 4.1]{Ped}.
As usual we identify elements in $\hat{A}$ with their representatives.
As we are assuming $A$ to be separable, the topology on $\hat{A}$ is second countable
\cite[Prop.\ 3.3.4]{Dix}. This topology can also be described in terms of
weak containment or, in the case that $A$ has no finite-dimensional
irreducible representations, as the quotient topology on unitary equivalence classes of
irreducible representations in $\Rep (A,\cH )$ (if $A$ has finite-dimensional
irreducible representations then one can stabilize and use the canonical homeomorphism
from $\widehat{A\otimes\cK}$ to $\hat{A}$) \cite[Sects.\ 3.4 and 3.5]{Dix}\cite[II.6.5.16]{OA}.

In the proof of the following lemma we use a rotation trick as in \cite{NSIDGR}.
For a closed linear subspace $E$ of $\cH$ we write $P_E$ for the orthogonal projection of $\cH$ onto $E$.

\begin{lemma}\label{L-turbpoint}
Let $\pi\in\Rep (A,\cH )$. If $\pi$ is faithful and essential then it has dense orbit and is a
turbulent point for the action of $\cU (\cH )$. If $\pi$ is not faithful or not essential
then its orbit is nowhere dense.
\end{lemma}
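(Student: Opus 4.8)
The plan is to prove the three assertions of the lemma in turn, drawing on Voiculescu's theorem (approximate unitary equivalence of faithful essential nondegenerate representations) as the central tool.

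The plan is to handle the three assertions in turn, with Voiculescu's theorem as the engine throughout.

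\emph{Dense orbit (faithful essential case).} Given an arbitrary target $\rho\in\Rep(A,\cH)$ together with a basic neighborhood $Y_{\rho,F,\Omega,\varepsilon}$, I would form $\rho\oplus\pi'$ on $\cH\oplus\cH$, where $\pi'$ is a copy of a faithful essential representation. This direct sum is again faithful, essential and nondegenerate, so by Voiculescu's theorem it is approximately unitarily equivalent to $\pi$: there are unitaries $U_n\colon\cH\to\cH\oplus\cH$ with $\|U_n\pi(a)U_n^{-1}-(\rho\oplus\pi')(a)\|\to 0$ for all $a\in A$. Fixing a unitary $W\colon\cH\oplus\cH\to\cH$ whose inverse sends each $\zeta\in\Omega$ to $(\zeta,0)$ in the first summand, the representation $\Ad(WU_n)\circ\pi$ lies in the orbit of $\pi$ and, testing matrix coefficients against the vectors of $\Omega$ placed in the first summand, converges weakly to $\rho$; hence it lies in $Y_{\rho,F,\Omega,\varepsilon}$ for large $n$. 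This part is routine.

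\emph{Turbulent point (faithful essential case).} Fix neighborhoods $U\ni\pi$ and $V\ni e$; I would show that $\overline{\cO(\pi,U,V)}$ has nonempty interior by showing the local orbit is dense in a nonempty open subset of $U$. Two ingredients drive this. Voiculescu's theorem lets me identify $\pi$, up to approximate unitary equivalence with the error absorbed into the tolerance defining $U$, with an infinite ampliation $\pi\oplus\pi\oplus\cdots$, which supplies room; and a rotation trick as in \cite{NSIDGR} lets a chain travel far inside $\cU(\cH)$ while the representation stays weakly close to $\pi$. Concretely, the scalar block rotations $R_\theta$ mixing two copies commute with the ampliated representation, so conjugating by them is a free move that does not leave $\pi$, and composing them with diagonal twists $\diag(U_0,I,\dots)$ by unitaries $U_0$ for which $\Ad U_0\circ\pi$ is weakly close to $\pi$ produces finite chains of small ($V$) steps. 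The key estimate is that the deviation of such a chain from $\pi$ along the rotation path is governed by $\|(U_0\pi(a)U_0^{-1}-\pi(a))\eta\|$ for $\eta$ in a fixed finite-dimensional subspace, namely the coordinate components of the test vectors, rather than by $\|U_0-I\|$; thus a chain may wander arbitrarily far in $\cU(\cH)$ and still remain inside $U$. Composing such moves, the reachable conjugates accumulate on a weakly dense subset of a neighborhood of $\pi$ within the orbit, and since the orbit is dense by the first part, $\overline{\cO(\pi,U,V)}$ contains an open set.

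\emph{Nowhere dense orbit (remaining cases).} If $\pi$ is not faithful, I would pick $a\in\ker\pi$ with $a\neq 0$; the orbit lies in the weakly closed set $\{\rho:\rho(a)=0\}$, which has empty interior because faithful essential representations, dense by the first part, satisfy $\rho(a)\neq 0$. If $\pi$ is not essential, choose $a$ with $\pi(a^*a)$ a nonzero positive compact operator; its largest eigenvalue is isolated with finite multiplicity, so a continuous function $f$ vanishing at $0$ yields $b:=f(a^*a)\in A$ with $\pi(b)$ a finite-rank projection of some rank $r\geq 1$. Every conjugate sends $b$ to a rank-$r$ projection, and using lower semicontinuity of the rank of a strong-operator limit of projections, the orbit lies in the closed set $Z=\{\rho:\rho(b)\text{ is a projection of rank }\le r\}$. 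This $Z$ has empty interior: every neighborhood contains a faithful essential $\rho$, for which $\rho(b)$ is either not a projection (if $b$ is not a projection in $A$) or a projection of infinite rank (if $b$ is a nonzero projection in $A$, since an essential representation has no nonzero finite-rank image); in either case $\rho\notin Z$, so the orbit is nowhere dense.

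The main obstacle is the turbulence step. The delicate point is organizing the rotation trick so that a long chain of small moves never leaves the prescribed neighborhood $U$ while the reachable conjugates fill a dense subset of some open set, keeping the compounded errors, namely the Voiculescu approximation, the discretization into $V$-steps, and the composition of many moves, under simultaneous control.
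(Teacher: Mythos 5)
Your treatment of the dense orbit and of the two nowhere-dense cases is correct and agrees in substance with the paper's (the paper obtains density of the orbit as a byproduct of its turbulence construction rather than from a separate Voiculescu argument, but your version works). The genuine gap is in the turbulence step, at exactly the point you flag as delicate. First, the reduction ``identify $\pi$, up to approximate unitary equivalence with the error absorbed into the tolerance defining $U$, with $\pi\oplus\pi\oplus\cdots$'' is not available: the unitaries realizing the approximate equivalence are not small, so they cannot be inserted into a chain of $V$-moves, and turbulence of a point is not in any evident way an invariant of approximate unitary equivalence (orbit closures are preserved, local orbits are not). Second, even granting that $\pi$ is literally an ampliation, the moves you describe --- rotations commuting with the ampliation composed with diagonal twists $\diag (U_0 , I,\dots )$ --- reach only conjugates of a special block-diagonal form, and the claim that these accumulate on all of $(\cU (\cH )\cdot\pi )\cap W$ for some neighbourhood $W$ of $\pi$ is not established; without it, your final inference via density of the orbit does not go through.

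The paper repairs precisely this by working locally rather than globally. Given an \emph{arbitrary} target $\sigma$ in the neighbourhood $Y = Y_{\pi ,K,\Omega ,\varepsilon}$ (not merely a conjugate of $\pi$), one forms the finite-dimensional subspace $E$ spanned by the test vectors and their images under $\pi (K)$ and $\sigma (L)$, and invokes the matrix form of Glimm's lemma (Bunce--Salinas) --- this is where faithfulness and essentiality enter --- to produce an isometry $V\colon E\to\cH$ with $VE\perp\spn (E\cup\pi (K\cup L)^* E)$ and $\| V^* \pi (a) V - P_E \sigma (a)|_E \| <\delta$. The rotation is then performed only in the $2\times 2$ corner $E\oplus VE$; the computation gives $P_E (W_t \pi (a) W_t^{-1}\xi ) \approx \cos^2 (t)\, \pi (a)\xi + \sin^2 (t)\, \sigma (a)\xi$, a convex combination, so the entire norm-continuous path stays in $Y$, and its discretization supplies the chain of $Z$-small steps ending in an arbitrary basic neighbourhood of $\sigma$. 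This yields $Y\subseteq\overline{\cO (\pi ,Y,Z)}$ directly, giving turbulence and density of the orbit simultaneously and avoiding any appeal to the local orbit structure of an ampliation. To salvage your outline, replace the global ampliation by this local construction of an orthogonal approximate copy of $P_E \sigma (\cdot )|_E$.
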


\begin{proof}
Suppose first that $\pi$ is faithful and essential.
To establish turbulence, let $Y$ be a neighbourhood of $\pi$ in $\Rep (A,\cH )$
and $Z$ a neighbourhood of $\mathbf{1}$ in $\cU (\cH )$. We will show that the closure of
the local orbit $\orb (\pi , Y, Z)$ has nonempty interior. We may suppose by shrinking
$Y$ and $Z$ if necessary that $Y = Y_{\pi , K, \Omega , \varepsilon}$ and
$Z = Z_{\mathbf{1} ,\Omega , \varepsilon}$ where $K$ is a finite subset of $A$, $\Omega$ is a finite subset
of the unit ball of $\cH$,
and $\varepsilon > 0$. Suppose that we are given a $\sigma\in Y$.
Let $L$ be a finite subset of $A$, $\Upsilon$ a finite subset of the unit ball of $\cH$, and $\delta > 0$. We will
construct a norm continuous path of unitaries $W_t \in \cU (\cH )$ for $t\in [0,\pi /2]$ such that $W_0 = \mathbf{1}$,
$\Ad W_t \circ\pi\in Y$ for every $t\in [0,\pi /2]$, and
$\Ad W_{\pi /2} \circ\pi\in Y_{\sigma , L, \Upsilon , \delta}$. This will show that
$\sigma$ lies in the closure of $\orb (\pi , Y, Z)$, since the continuity of the path permits us
to find $t_0 = 0 < t_1 < t_2 < \cdots < t_m = \pi /2$ such that $W_{t_i} W_{t_{i-1}}^{-1} \in Z$
for each $i=1, \dots ,m$.

We may assume that $\Upsilon$ contains $\Omega$ and that $\delta$ is small enough so that
$\| \sigma (a)\xi - \pi (a) \xi \| + \delta < \varepsilon$ for all $a\in K$ and $\xi\in\Omega$.
Let $E$ be the subspace spanned by $\Upsilon\cup\pi (K)\Upsilon\cup\sigma (L)\Upsilon$.
Since $\pi$ is faithful and essential, by the matrix version of 
Glimm's lemma \cite{BunSal76} (see also Lemma~II.5.2 in \cite{Dav} and the
paragraph following it) there is an isometry $V : E\to \cH$ such that $VE \perp\spn (E\cup \pi (K\cup L)^* E)$
and $\| V^* \pi (a) V - P_E \sigma (a) |_E \| < \delta$ for all $a\in K\cup L$.
For each $t\in [0,\pi /2]$ let $W_t$ be the unitary operator on $\cH$ which is the
identity on $(E\oplus VE )^\perp$ and acts on $E\oplus VE$ in $2\times 2$ block form as
\[ \left[ \begin{matrix} \cos (t)\mathbf{1} & \sin (t)V^* \\ -\sin (t)V & \cos (t)\mathbf{1} \end{matrix} \right] . \]

Let $t\in [0,\pi /2]$, $a\in K\cup L$, and $\xi\in\Upsilon$. Since
$\| P_{VE} \pi (a) V\xi - V\sigma (a)\xi \| < \delta$ and $\pi (a) V\xi \in E^\perp$
we have, writing $c = \cos t$ and $s = \sin t$,
\begin{align*}
W_t \pi (a) W_t^{-1} \xi &= W_t \big( c\pi (a)\xi + s\pi (a) V\xi \big) \\
&\approx_\delta W_t \big( c\pi (a)\xi + sV\sigma (a) \xi + sP_{E\oplus VE}^\perp \pi (a) V\xi \big) \\
&= c^2 \pi (a)\xi + s^2 \sigma (a) \xi + csV(-\pi (a)\xi + \sigma (a)\xi )
+ sP_{E\oplus VE}^\perp \pi (a) V\xi .
\end{align*}
It follows that if $a\in K$ and $\xi\in\Omega$ then
\begin{align*}
\| P_E (W_t \pi (a) W_t^{-1} \xi - \pi (a)\xi ) \| &\leq
\| c^2 \pi (a)\xi + s^2 \sigma (a)\xi - \pi (a) \xi \| + \delta \\
&= s^2 \| \sigma (a)\xi - \pi (a) \xi \| + \delta \\
&< \varepsilon .
\end{align*}
and so $\Ad W_t \circ\pi\in Y$ since $\Omega$ is contained in the unit ball of $E$.
In the case $t = \pi /2$ and $a\in L$
we obtain $\| P_E (W_{\pi /2} \pi (a) W_{\pi /2}^{-1} \xi - \sigma (a)\xi ) \| < \delta$
and so $\Ad W_{\pi /2} \circ\pi\in Y_{\sigma , L, \Upsilon , \delta}$ since $\Upsilon$ is
contained in the unit ball of $E$.
We thus conclude that $\sigma$ lies in the closure of $\orb (\pi , Y, Z)$ and hence that
$\pi$ is a turbulent point for the action of $\cU (\cH )$. Moreover, since we have shown that
$Y\subseteq \overline{\orb (\pi , Y, Z)}$ for any $Y$ of the form $Y_{\pi , K, \Omega , \varepsilon}$,
we see that the orbit of $\sigma$ is dense.

Now if $\pi$ is any faithful representation in $\Rep (A,\cH )$ then for every $a\in A$
we evidently have $\sigma (a) \neq 0$ for all $\sigma$ in some neighbourhood of
$\pi$. Since the faithful representations are dense in $\Rep (A,\cH )$ by the first paragraph,
we deduce that the orbit of every nonfaithful representation is nowhere dense.

Suppose finally that $\pi$ is a representation in $\Rep (A,\cH )$ which is not essential.
Then $\pi (A)$ contains a nonzero positive compact operator, and hence by the functional
calculus there is an $a\in A$ such that $\pi (a)$ is nonzero and of finite rank.
Take a faithful essential representation $\sigma\in\Rep (G,\cH )$ (for example, a
representation unitarily equivalent to $\rho^{\oplus\Nb}$ for any faithful $\rho\in\Rep (G,\cH )$).
It is readily seen that for every $\sigma'$ in some neighbourhood of
$\sigma$ the range of $\sigma' (a)$
has dimension larger than the rank of $\pi (a)$. Since the orbit of $\sigma$ is dense
as shown above, we conclude that the orbit of $\pi$ is nowhere dense.
\end{proof}

Since for an action of a group on a second countable topological space
the set of points with dense orbit is a $G_\delta$, we obtain from Lemma~\ref{L-turbpoint}
the following.

\begin{lemma}\label{L-fe}
The set of faithful essential representations in $\Rep (A,\cH )$ is a dense $G_\delta$.
\end{lemma}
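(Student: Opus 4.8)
The plan is to recognize the set of faithful essential representations as exactly the set of points with dense orbit, and then quote the preceding remark that the latter is a $G_\delta$. By the first assertion of Lemma~\ref{L-turbpoint}, every faithful essential representation has dense orbit; by the second assertion, every representation that is not faithful or not essential has nowhere dense, hence non-dense, orbit. Together these identify the faithful essential representations with the set $D$ of dense-orbit points. Since $\Rep(A,\cH)$ is second countable and the $\cU(\cH)$-action on it is continuous, $D$ is a $G_\delta$ by the remark preceding the statement, so the faithful essential representations form a $G_\delta$.

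For density I would simply observe that a faithful essential representation exists---for instance $\rho^{\oplus\Nb}$ for any faithful $\rho\in\Rep(A,\cH)$, as in the proof of Lemma~\ref{L-turbpoint}---and that its orbit, being dense by that lemma, lies in the set of faithful essential representations, because faithfulness and essentialness are preserved under $\Ad U$ (indeed $U\pi(a)U^{-1}=0$ iff $\pi(a)=0$, and $U\cK(\cH)U^{-1}=\cK(\cH)$ gives $\Ad U\circ\pi(A)\cap\cK(\cH)=U(\pi(A)\cap\cK(\cH))U^{-1}$). Thus this set contains a dense subset and is dense.

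Both ingredients are immediate consequences of Lemma~\ref{L-turbpoint} and the standard $G_\delta$ fact recalled just before the statement, so there is no real obstacle. The only subtlety worth flagging is that the identification of the faithful essential representations with $D$ must use \emph{both} halves of Lemma~\ref{L-turbpoint}: the forward implication gives one inclusion and the contrapositive of the nowhere-dense conclusion gives the other.
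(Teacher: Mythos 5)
Your proof is correct and is essentially the paper's own argument: the paper likewise deduces the lemma from Lemma~\ref{L-turbpoint} together with the observation that the set of points with dense orbit under a continuous action on a second countable space is a $G_\delta$, the identification of that set with the faithful essential representations using both halves of Lemma~\ref{L-turbpoint} being exactly as you describe.
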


The following was observed for unitary representations of countable groups
in Proposition~H.2 of \cite{Kechris} but the same argument applies more generally.

\begin{lemma}\label{L-wc}
Let $\pi , \sigma\in\Rep (A,\cH )$ and let $\rho$ be an element of $\Rep (A,\cH )$
unitarily conjugate to $\sigma^{\oplus\Nb}$. Then $\pi$ is weakly contained in $\sigma$ if and only
if $\pi$ lies in the orbit closure of $\rho$.
\end{lemma}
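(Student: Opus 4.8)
The plan is to prove both implications of the equivalence between weak containment $\pi \prec \sigma$ and membership of $\pi$ in the orbit closure $\overline{\orb(\rho)}$, where $\rho \cong \sigma^{\oplus\Nb}$. Recall that $\pi \prec \sigma$ means that every state associated to $\pi$ (equivalently, every matrix coefficient $a \mapsto \langle \pi(a)\xi,\xi\rangle$) can be approximated pointwise on $A$ by convex combinations of matrix coefficients of $\sigma$; since $\sigma$ and its infinite amplification $\rho$ have the same matrix coefficients up to sums, one may as well work with $\rho$ from the start, which is the reason for passing to $\sigma^{\oplus\Nb}$.

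For the implication $(\Leftarrow)$, I would argue that weak containment is preserved under the topology on $\Rep(A,\cH)$ and under unitary conjugacy. First, $\Ad U \circ \rho$ is unitarily equivalent to $\rho$, hence has the same matrix coefficients, so every element of the orbit of $\rho$ is weakly equivalent to $\sigma$. It then suffices to check that the set $\{\tau \in \Rep(A,\cH) : \tau \prec \sigma\}$ is closed in the topology generated by the basic sets $Y_{\pi,F,\Omega,\varepsilon}$. This follows because weak containment of $\tau$ in $\sigma$ can be tested on the dense-in-norm countable set of matrix coefficients, each of which is a jointly continuous function of $\tau$ in this topology; if $\pi$ lies in the closure of a set of representations all weakly contained in $\sigma$, then each matrix coefficient of $\pi$ is a pointwise limit of matrix coefficients of those representations, hence is itself approximable by matrix coefficients of $\sigma$. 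Thus $\pi \prec \sigma$.

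For the converse $(\Rightarrow)$, the key is an explicit approximation using the infinite multiplicity of $\rho$. Suppose $\pi \prec \sigma$, and fix a basic neighbourhood $Y_{\pi,F,\Omega,\varepsilon}$ of $\pi$; I want to produce a unitary $U$ with $\Ad U \circ \rho \in Y_{\pi,F,\Omega,\varepsilon}$. The idea is that since the finitely many vector functionals $a \mapsto \langle \pi(a)\xi,\zeta\rangle$ for $\xi,\zeta \in \Omega$ are weakly contained in $\sigma$, one can find finitely many vectors in the Hilbert space of $\sigma^{\oplus\Nb}$ whose associated matrix coefficients (and inner products) approximate those of the vectors in $\Omega$ to within $\varepsilon$ on $F$. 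Concretely, weak containment together with the amplification lets one approximate the compression of $\pi$ to $\spn(\Omega \cup \pi(F)\Omega)$ by the compression of $\rho$ to a suitable finite-dimensional subspace; because $\rho$ has infinite multiplicity there is enough room to build a partial isometry matching the Gram data, which then extends to a unitary $U$ on all of $\cH$. Conjugating $\rho$ by $U$ places it inside $Y_{\pi,F,\Omega,\varepsilon}$, and since these sets form a neighbourhood basis at $\pi$, we conclude $\pi \in \overline{\orb(\rho)}$.

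The main obstacle is the direction $(\Rightarrow)$, specifically turning the purely functional-analytic content of weak containment (approximation of states by convex combinations of matrix coefficients) into an honest unitary matching of a finite block of vectors in the \emph{same} Hilbert space $\cH$. The convex combinations appearing in weak containment must be absorbed by the infinite direct sum structure of $\rho$, so that a sum of several matrix coefficients of $\sigma$ becomes a single matrix coefficient of $\rho$ realized on orthogonal copies; tracking the inner products among the chosen vectors (not just the functionals $\langle \pi(a)\xi,\xi\rangle$ but the full Gram matrix and the cross terms $\langle\pi(a)\xi,\zeta\rangle$) carefully enough to build a genuine isometry is the delicate computation. Everything else reduces to the standard fact that matrix coefficients are continuous in the given topology and that the basic sets form a neighbourhood basis.
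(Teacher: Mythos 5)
Your argument is the standard one and matches what the paper intends: the paper gives no proof of this lemma at all, remarking only that it was observed as Proposition~H.2 of \cite{Kechris} for unitary group representations and that the same argument applies in the $C^*$-algebra setting. Both directions of your outline are sound, and the one step you defer --- upgrading weak containment to simultaneous approximation of the full matrix of coefficients $\langle \pi (a)\xi_i ,\xi_j \rangle$ together with the Gram matrix $\langle \xi_i ,\xi_j\rangle$ by a single tuple of vectors for $\rho$ --- is carried out in the standard way by polarizing (equivalently, passing to $M_n (A)$ and testing against the elements $\sigma (a)\otimes e_{ij}$, with an approximate unit supplying the Gram data), after which the infinite multiplicity of $\rho$ converts a convex combination of vector functionals of $\sigma$ into a single vector functional of $\rho$ exactly as you indicate, and a small perturbation to an exactly orthonormal family yields the required unitary.
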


For each open set $U\subseteq\hat{A}$ there is a closed ideal $I\subseteq A$ such that $U$
is equal to the set $\hat{A}^I$ of all $\sigma\in\hat{A}$ for which $\sigma (I) \neq 0$, and the
restriction map $\hat{A}^I \to \hat{I}$ is a homeomorphism \cite[Sect.\ 3.2]{Dix}. This sets up
a bijective correspondence between the open subsets of $\hat{A}$ and the closed ideals
of $A$, and for each closed ideal $I$ we regard $\hat{I}$ as an open subset of $\hat{A}$.

We write $\hat{A}_\iso$ for the set of isolated points in $\hat{A}$. This set is countable
because $\hat{A}$ is second countable, which follows from the separability of $A$
\cite[Prop.\ 3.3.4]{Dix}.

\begin{lemma}\label{L-not dense}
Let $A$ be a separable $C^*$-algebra such that $\hat{A}_\iso$ is not dense in $\hat{A}$.
Let $I$ be the closed ideal of $A$ for which $\hat{I}$ is the complement of the closure of $\hat{A}_\iso$.
Let $\pi\in  {\rm Rep}(A, \cH)$ and set $\cH_I=\overline{\pi(I)\cH}$. Then the set
$(\pi|_{\cH_I})^{\perp}$ of representations in $\Rep (A,\cH )$ which are disjoint from $\pi |_{\cH_I}$
is a dense $G_{\delta}$.
\end{lemma}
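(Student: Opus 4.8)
The plan is to treat separately the two assertions—that $(\pi|_{\cH_I})^\perp$ is $G_\delta$ and that it is dense—writing $\tau=\pi|_{\cH_I}$ for brevity. Two preliminary reductions organize everything. First, since $\cH_I=\overline{\pi(I)\cH}$ the representation $\tau$ is nondegenerate on $I$, so every subrepresentation of $\tau$ is nondegenerate on $I$; writing an arbitrary $\sigma\in\Rep(A,\cH)$ as $\sigma=\sigma^I\oplus\sigma'$ with $\sigma^I$ the part nondegenerate on $I$ (acting on $\overline{\sigma(I)\cH}$) and $\sigma'$ the part annihilating $I$, a short argument shows that any common subrepresentation of $\sigma$ and $\tau$ is forced, by nondegeneracy on $I$, to be a subrepresentation of $\sigma^I$. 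Hence $\sigma\perp\tau$ if and only if $\sigma^I\perp\tau$, and summands coming from $\widehat{A/I}$ are automatically disjoint from $\tau$. Second, the irreducible subrepresentations of $\tau$, being pairwise orthogonal inside the separable space $\cH_I$, form a countable set $\cF\subseteq\hat{A}$, and by the same nondegeneracy $\cF\subseteq\hat{I}$.

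For the $G_\delta$ assertion I would characterize the complementary relation through intertwiners. A nonzero intertwiner between $\sigma$ and $\tau$ may be rescaled to a contraction, so $\sigma\not\perp\tau$ exactly when there are a contraction $V$ with $V\sigma(a)=\tau(a)V$ for all $a\in A$ and indices $m,l$, drawn from fixed countable dense sequences $(\xi_m)$ and $(\zeta_l)$ in $\cH$, with $\operatorname{Re}\langle V\xi_m,\zeta_l\rangle\ge 1/8$. Each such set $C_{m,l}$ is closed: given $\sigma_j\to\sigma$ with contractions $V_j$, pass to a weak-operator limit $V$ of $(V_j)$; the intertwining relation survives because the topology on $\Rep(A,\cH)$ is the strong one, so $\sigma_j(a)\xi\to\sigma(a)\xi$ in norm, while $\operatorname{Re}\langle V\xi_m,\zeta_l\rangle\ge 1/8$ survives because $V\mapsto\langle V\xi_m,\zeta_l\rangle$ is weak-operator continuous. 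Using a fixed weak matrix coefficient rather than $\|V\|$ is precisely what avoids the wrong-way semicontinuity of the norm under weak limits. Thus $\{\sigma:\sigma\not\perp\tau\}=\bigcup_{m,l}C_{m,l}$ is $F_\sigma$ and $(\pi|_{\cH_I})^\perp$ is $G_\delta$.

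Density is where the hypothesis on $\hat{A}_\iso$ enters, and is the heart of the matter. Because direct sums of irreducibles are dense in $\Rep(A,\cH)$ by Voiculescu's theorem, it suffices, in a given basic neighbourhood and on the relevant finite data, to approximate such a direct sum by a representation $\sigma=\bigoplus_n q_n$ whose summands all lie in $(\hat{I}\setminus\cF)\cup\widehat{A/I}$; for such a $\sigma$ the first reduction gives $\sigma\perp\tau$ as soon as no summand lies in $\cF$. The construction decomposes the target into irreducibles, truncates, retains the $\widehat{A/I}$-summands (already disjoint from $\tau$), and replaces each summand from $\hat{I}$ by a nearby irreducible whose matrix coefficients agree to the required tolerance. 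The irreducibles meeting these finitely many matrix-coefficient constraints form a nonempty open subset $O$ of $\hat{I}$, and what is needed is a point of $O$ lying outside the countable set $\cF$; one then assembles the $q_n$ on $\cH$, padding with $\widehat{A/I}$-summands to fill out the space and secure nondegeneracy.

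The main obstacle is exactly this dodging step, and it is what forces the hypothesis. By construction $I$ is chosen so that $\hat{I}$ is the complement of the closure of $\hat{A}_\iso$, hence a nonempty open set with no isolated points; the step reduces to the point-set fact that every nonempty open subset of $\hat{I}$ is uncountable, so that it cannot be swallowed by the countable set $\cF$. Since such an $O$ is itself homeomorphic to the spectrum of a nonzero ideal, and the primitive ideal space of a separable $C^*$-algebra is a second countable, locally quasi-compact Baire space, the uncountability follows from the absence of isolated points; carrying this out rigorously is delicate precisely because $\hat{I}$ is only $T_0$, so the naive Baire-category argument through nowhere-dense singletons cannot be applied directly and must be routed through the open continuous surjection $\hat{I}\to\Prim I$ and the ideal–open-set correspondence. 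Once a non-forbidden irreducible is produced in each $O$, the assembled $\sigma$ lies in $(\pi|_{\cH_I})^\perp$ and in the prescribed neighbourhood, which together with the $G_\delta$ property completes the proof that $(\pi|_{\cH_I})^\perp$ is a dense $G_\delta$.
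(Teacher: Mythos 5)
Your overall architecture is sound and matches the paper's in outline: reduce via Voiculescu to direct sums of irreducibles, use the countability of the equivalence classes of irreducible subrepresentations of $\pi|_{\cH_I}$, and use the uncountability of nonempty open subsets of $\hat{I}$ (which the paper gets from Wang's lemma, and which your Baire-category sketch gestures at but, as you yourself note, does not complete). Your direct proof of the $G_\delta$ assertion via weak-operator limits of contraction intertwiners and a fixed matrix coefficient is correct and is a reasonable substitute for the paper's citation of \cite[Lemma 3.7.3]{Dix}.

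The genuine gap is in the density argument, at the step where you ``replace each summand from $\hat{I}$ by a nearby irreducible whose matrix coefficients agree to the required tolerance'' and assert that the admissible replacements form a nonempty \emph{open} subset $O$ of $\hat{I}$. A replacement whose matrix coefficients agree with those of $\pi_k$ on prescribed vectors must act on the same space $\cH_k$, so $O$ is the image of an open subset of $\Irr (A,\cH_k )$; that image is open only in the part of $\hat{A}$ consisting of classes of dimension $\dim\cH_k$, which is in general not open in $\hat{A}$ (the sets $\hat{A}_{\leq n}$ are closed, not open). Hence the uncountability of nonempty open subsets of $\hat{I}$ does not apply to $O$, and $O$ can in fact be finite. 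Concretely, take $A=\{f\in C([0,1],M_2): f(0)\ \text{diagonal}\}$: then $\hat{A}_\iso=\emptyset$, so $I=A$, and the two characters $\chi_1 ,\chi_2$ at $0$ are $1$-dimensional, non-isolated points of $\hat{I}$; the only irreducibles realizable on $\Cb$ are $\chi_1$ and $\chi_2$ themselves, so if both lie in $\cF$ there is no admissible replacement for a summand $\chi_1$, and your dodging step fails even though the lemma is true. The missing idea is exactly what the paper's weak-containment route supplies: since every nonempty open subset of $\hat{I}$ is uncountable and $\hat{I}$ is second countable, one can choose a countable set $E\subseteq\hat{I}\setminus\cF$ that is dense in $\hat{I}$, so that each $\pi_k$ lies in $\overline{E}$, i.e.\ is \emph{weakly contained} in $E$; then \cite[Thm.\ 3.4.10]{Dix} together with Lemma~\ref{L-wc} puts $\pi_0\oplus\bigoplus_k\pi_k$ in the orbit closure of $\pi_0\oplus\bigoplus_{\rho\in E}\rho^{\oplus\Nb}$, which is disjoint from $\pi|_{\cH_I}$. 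The point is that weak containment lets the approximating irreducibles have different dimensions and be realized with infinite multiplicity anywhere in $\cH$, rather than block-by-block on the fixed subspaces $\cH_k$; your summand-by-summand replacement is too rigid to capture this. (Your argument does go through when $A$ has no finite-dimensional irreducible representations, since then $\Irr (A,\cH )\to\hat{A}$ is open onto all of $\hat{A}$; but the lemma is needed, and used in the paper, in full generality.)
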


\begin{proof}
By Lemma~3.7.3 of \cite{Dix}, $(\pi|_{\cH_I})^{\perp}$ is a $G_{\delta}$ subset of ${\rm Rep}(A, \cH)$.
By Voiculescu's theorem \cite[Cor.\ 1.6]{NCWvNT}, the orbit closure of a given element of ${\rm Rep}(A, \cH)$
is the same as the orbit closure of some $\pi'=\pi_0\oplus \bigoplus_{k\in K}\pi_k$ with $\pi_0(I)=\{0\}$,
$K$ a countable index set, and $\pi_k \in\hat{I}$ for each $k\in K$. To show that
$(\pi|_{\cH_I})^{\perp}$ is dense, it suffices to show that $\pi'$ is
in the orbit closure of some element in $(\pi|_{\cH_I})^{\perp}$.

We first argue that each nonempty open subset of $\hat{I}$ is uncountable.
Indeed suppose to the contrary that there is a countable open set $U\subseteq \hat{I}$.
Let $I_0$ be the closed ideal of $A$ such that $\hat{I_0} = U$.
By Lemma~1.3 of \cite{Wang}, $\hat{J}$ has an isolated point. This point is also isolated
when viewed as an element of $\hat{A}$, yielding a contradiction.

Since $\cH_I$ is separable, the set $D$ of irreducible subrepresentations of $\pi|_{\cH_I}$
is countable. Thus, since $\hat{I}$ is second countable and
each of its nonempty open subsets is uncountable, we can construct a countable
set $E\subseteq\hat{I} \setminus D$ which contains $\pi_k$ in its closure for each $k\in K$.
Then $\pi |_{\cH_I}$ is disjoint from $\pi_0\oplus \bigoplus_{\rho\in E} \rho^{\oplus \Nb}$,
and $\pi'$ is weakly contained in $\pi_0\oplus \bigoplus_{\rho\in E} \rho^{\oplus \Nb}$
\cite[Thm.\ 3.4.10]{Dix}
and hence lies in the orbit closure of every representation in $\Rep (A,\cH )$
unitarily conjugate to $\pi_0\oplus \bigoplus_{\rho\in E} \rho^{\oplus \Nb}$ by Lemma~\ref{L-wc}.
This finishes the proof.
\end{proof}

Write $\FE (A,\cH )$ for the $\cU (\cH )$-invariant set of faithful essential representations in\linebreak
$\Rep (A,\cH )$, which is a dense $G_\delta$ by Lemma~\ref{L-fe}.

\begin{theorem}\label{T-turbulent}
Let $A$ be a separable $C^*$-algebra.
If $\hat{A}_\iso$ is dense in $\hat{A}$ then the action of $\cU (\cH )$ on $\Rep (A,\cH )$ has a dense
$G_\delta$ orbit, while if $\hat{A}_\iso$ is not dense in $\hat{A}$ then the restriction of the
action to $\FE (A,\cH )$ is turbulent.
Furthermore, the action on $\Rep (A,\cH )$ is turbulent precisely when $A$ is simple and
not isomorphic to the compact operators
on some Hilbert space.
\end{theorem}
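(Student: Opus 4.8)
The plan is to derive the characterization from Lemma~\ref{L-turbpoint} together with the first assertion of the present theorem, by translating the two ways turbulence can fail—non-dense orbits and non-meager orbits—into algebraic conditions on $A$. First I would note that, by Lemma~\ref{L-turbpoint}, the action on $\Rep(A,\cH)$ has every orbit dense and every point turbulent \emph{if and only if} every representation is faithful and essential: a representation that is not faithful or not essential has nowhere dense orbit, so density of all orbits already forces all representations to be faithful and essential, and conversely Lemma~\ref{L-turbpoint} then supplies both density and turbulence. Thus turbulence of the action is equivalent to the conjunction of ``every representation is faithful and essential'' with ``every orbit is meager,'' and the task reduces to pinning down when these hold.

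Next I would identify the algebraic content of the first conjunct. Every nondegenerate representation is faithful precisely when $A$ is simple: if $A$ is simple, a nondegenerate (hence nonzero) representation has kernel a proper closed ideal, so trivial; while if $A$ has a nontrivial closed ideal $I$, then composing the quotient $A\to A/I$ with a faithful nondegenerate representation of $A/I$, amplified to infinite multiplicity, yields a nondegenerate non-faithful representation on $\cH$. Granting simplicity, every representation is essential unless some nondegenerate $\pi$ satisfies $\pi(A)\cap\cK(\cH)\neq\{0\}$; then $\pi^{-1}(\cK(\cH))$ is a nonzero closed ideal, hence all of $A$, so $\pi(A)$ is a simple $C^*$-subalgebra of $\cK(\cH)$. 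By the structure theory of $C^*$-algebras of compact operators (equivalently, a simple $C^*$-algebra with a nonzero minimal projection is isomorphic to some $\cK(\cH_0)$) this forces $A\cong\cK(\cH_0)$. When $\cH_0$ is infinite dimensional the identity representation on $\cH_0\cong\cH$ is genuinely non-essential, whereas when $\cH_0$ is finite dimensional (so $A\cong M_n$) every nondegenerate representation on the infinite dimensional $\cH$ is a faithful \emph{essential} ampliation. Hence the first conjunct holds exactly when $A$ is simple and either isomorphic to no algebra of compact operators or isomorphic to some $M_n$.

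It remains to fold in meagerness, which I would route through the first statement of the theorem. For simple $A$ the ideal correspondence forces $\hat A$ to carry the indiscrete topology, so $\hat A_\iso$ is dense if and only if $\hat A$ is a single point; by Naimark's theorem (valid in the separable case) a unique irreducible representation forces $A\cong\cK(\cH_0)$, so for simple $A$ not isomorphic to any compacts one has $|\hat A|\ge 2$ and $\hat A_\iso=\emptyset$, which is not dense. The first part of the theorem then gives turbulence of the restriction to $\FE(A,\cH)$, and by the previous paragraph $\FE(A,\cH)=\Rep(A,\cH)$ in this case, yielding the ``$\Leftarrow$'' direction with meagerness included. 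For ``$\Rightarrow$,'' turbulence forces all representations faithful and essential, hence $A$ simple with $A\not\cong\cK(\cH_0)$ for infinite dimensional $\cH_0$; the only surviving possibility $A\cong M_n$ is then eliminated by meagerness, since $\Rep(M_n,\cH)$ is a single $\cU(\cH)$-orbit (every nondegenerate representation being the $\aleph_0$-fold ampliation of the standard one) and a nonempty Polish space is never meager in itself.

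The main obstacle I anticipate is the clean separation of the two obstructions in the borderline case $A\cong M_n$: here Lemma~\ref{L-turbpoint} alone would certify every point as turbulent with dense orbit, so turbulence can fail only through non-meagerness of the single orbit, and it is precisely this Baire-category/single-orbit observation—rather than any representation-theoretic defect—that excludes the finite dimensional compacts. A secondary technical point is the appeal to Naimark's theorem to guarantee at least two inequivalent irreducible representations (equivalently, $\hat A_\iso$ not dense) in the remaining simple case; everything else is either a direct application of Lemma~\ref{L-turbpoint} and the first part of the theorem or a routine ideal computation.
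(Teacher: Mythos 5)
Your proposal proves only the final sentence of the theorem (the characterization of turbulence on all of $\Rep(A,\cH)$), and it does so by explicitly taking the first two assertions as inputs. Those assertions are part of the statement, and they carry essentially all of the work in the paper's proof: (a) when $\hat{A}_\iso$ is dense one must actually exhibit a dense $G_\delta$ orbit, which the paper does by taking $\rho=\bigoplus_{\pi\in\hat{A}_\iso}\pi^{\oplus\Nb}$, showing its orbit is dense via Voiculescu's theorem and weak containment, and showing the orbit is a $G_\delta$ using Wang's result that every representation of the ideal corresponding to $\hat{A}_\iso$ decomposes into irreducibles together with explicitly constructed open conditions on multiplicities; and (b) when $\hat{A}_\iso$ is not dense one must prove that every orbit in $\FE(A,\cH)$ is \emph{meager}, which is not a consequence of Lemma~\ref{L-turbpoint} at all --- the paper derives it from Lemma~\ref{L-not dense}, whose proof uses disjointness, Voiculescu's theorem again, and the fact that every nonempty open subset of the relevant part of the spectrum is uncountable. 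Your reduction of turbulence to ``all representations faithful and essential'' plus ``all orbits meager'' correctly isolates what is needed, but the meagerness half is precisely the missing content, so the proposal has a genuine gap rather than being a complete alternative proof.

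That said, the part you do prove is correct and runs parallel to the paper's third paragraph, with two small differences worth noting. You are more explicit than the paper in invoking Rosenberg's solution of Naimark's problem for separable algebras to rule out a simple non-elementary $A$ having singleton spectrum (the paper uses this implicitly when it asserts that for such $A$ one can quote ``what we know from above,'' which requires $\hat{A}_\iso$ not to be dense). And for the elementary case you argue via the single-orbit/Baire observation for $M_n$ and via a non-essential representation for $\cK(\cH_0)$ with $\cH_0$ infinite dimensional, whereas the paper handles both at once by noting the spectrum is a singleton and a dense $G_\delta$ orbit is nonmeager; the two routes are equivalent. To make the submission complete you would need to supply proofs of the first two assertions, in particular a meagerness argument along the lines of Lemma~\ref{L-not dense}.
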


\begin{proof}
Suppose first that $\hat{A}_\iso$ is dense in $\hat{A}$. Let $I$ be the closed ideal of $A$ such that
$\hat{I} = \hat{A}_\iso$.
For each $\pi\in\hat{A}_\iso$ let $I_\pi$ be the closed ideal of $A$ such that $\hat{I}_\pi = \{ \pi \}$.
Note that the set $V$
of all representations in $\Rep (A,\cH )$ which are nondegenerate on $I$ can be expressed as
\[ \bigcap_{n=1}^\infty \big\{ \sigma\in\Rep (G,\cH ) : \| \sigma (a)\xi_n \| > {\textstyle\frac12} \| \xi_n \|
\text{ for some } a \text{ in the unit ball of } A \big\} \]
for a given dense sequence $\{ \xi_n \}_{n=1}^\infty$ in the unit sphere of $\cH$ and hence is a $G_\delta$.
Given a $\pi\in\hat{I}$, choose a projection $p_\pi \in I_\pi$ such that $\pi (p_\pi )$ has rank one.
Then for every $n\in\Nb$ the set $V_{\pi ,n}$ of all $\pi\in\Rep (A,\cH )$ such that
$\Tr (\sigma (p_\pi )) > n$ is easily seen to be open by expressing
$\Tr$ in terms of a fixed orthonormal basis of $\cH$.
As $\hat{A}$ is second countable, $\hat{I}$ is countable.
Set $\rho = \bigoplus_{\pi\in\hat{A}_\iso} \pi^{\oplus\Nb}$.
By Lemma~1.4 of \cite{Wang}, every representation of $I$ is a direct sum of irreducible representations.
Consequently the orbit of $\rho$ is precisely
$V\cap\bigcap_{\pi\in\hat{A}_\iso} \bigcap_{n\in\Nb} V_{\pi ,n}$, a $G_\delta$ subset of $\Rep (A,\cH )$.
Moreover, since by Voiculescu's theorem the orbit closure of any element in $\Rep (A,\cH )$ is equal
to the orbit closure of some direct sum of irreducible representations \cite[Cor.\ 1.6]{NCWvNT},
we see by \cite[Thm.\ 3.4.10]{Dix} and Lemma~\ref{L-wc} that the orbit of $\pi$ is dense in $\Rep (A,\cH )$.

Suppose now that $\hat{A}_\iso$ is not dense in $\hat{A}$. To establish that the action of $\cU (\cH )$
on $\FE (A,\cH )$ is turbulent,
it suffices by Lemma~\ref{L-turbpoint} to show that every orbit in $\FE (A,\cH )$ is meager.
Let $\pi\in\FE (A,\cH )$. Let $I$ be closed ideal of $A$ such that $\hat{I}$
is the complement of the closure of $\hat{A}_\iso$ in $\hat{A}$.
Then $\pi (I) \neq \{ 0 \}$ by faithfulness, and so the orbit of $\pi$ is meager
by Lemma~\ref{L-not dense}.
Thus every orbit in $\FE (A,\cH )$ is meager and we have turbulence.

Finally, if $A$ is simple and not isomorphic to the compact operators on some Hilbert space then every
representation is faithful and essential and the topology on $\hat{A}$ is trivial. Thus by Lemma~\ref{L-turbpoint}
every orbit in $\Rep (A,\cH )$ is dense and so the action is turbulent in view of what we
know from above. If $A$ is isomorphic to the compact operators on some Hilbert space
then its spectrum is a singleton and so from above there is a dense $G_\delta$ orbit in $\Rep (A,\cH )$.
If $A$ is not simple then it has a nontrivial quotient and hence a
nonfaithful representation in $\Rep (A,\cH )$, and the orbit of this representation is nowhere
dense by Lemma~\ref{L-turbpoint}, so that the action fails to be turbulent.
\end{proof}

\begin{corollary}
Let $A$ be a separable unital antiliminary $C^*$-algebra.
Then the action of $\cU (\cH )$ on $\Rep (A,\cH )$ is generically turbulent.
\end{corollary}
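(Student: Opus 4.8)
The plan is to derive the corollary from Theorem~\ref{T-turbulent}: by that theorem it suffices to check that $\hat{A}_\iso$ is not dense in $\hat{A}$, and I will in fact show that $\hat{A}$ has no isolated points at all. Since $A$ is unital it is nonzero, so $\hat{A}\neq\emptyset$ and $\hat{A}_\iso=\emptyset$ is trivially not dense. Granting this, Theorem~\ref{T-turbulent} tells us that the restriction of the $\cU(\cH)$-action to $\FE(A,\cH)$ is turbulent, and Lemma~\ref{L-fe} identifies $\FE(A,\cH)$ as a $\cU(\cH)$-invariant dense $G_\delta$ in $\Rep(A,\cH)$.

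To rule out isolated points, suppose $\pi\in\hat{A}$ were isolated, so that $\{\pi\}$ is open. Using the bijective correspondence between open subsets of $\hat{A}$ and closed ideals recalled before Lemma~\ref{L-not dense}, let $I$ be the closed ideal with $\hat{I}=\{\pi\}$. First I would check that $I$ is simple: the canonical continuous surjection $\hat{I}\to\Prim(I)$ has a one-point domain, so $\Prim(I)$ is a single point, and hence the only closed ideals of $I$, which correspond to the open subsets of $\Prim(I)$, are $\{0\}$ and $I$.

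Next I would invoke Glimm's theorem. The separable algebra $I$ has a one-point spectrum, which is trivially $T_0$, so $I$ is type I; being simultaneously simple, $I$ is then liminary, indeed isomorphic to $\cK(\cH')$ for some Hilbert space $\cH'$. Thus $I$ is a nonzero liminary closed ideal of $A$, contradicting the antiliminarity of $A$, and this contradiction yields $\hat{A}_\iso=\emptyset$. I expect this structural step, extracting a liminary ideal from an isolated point of the spectrum, to be the main obstacle, since it is where the hypothesis of antiliminarity is used and where the structure theory enters.

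It then remains to promote turbulence on $\FE(A,\cH)$ to generic turbulence of the action on $\Rep(A,\cH)$. Here I would note that any point of $\FE(A,\cH)$ has dense orbit and is a turbulent point for the action on all of $\Rep(A,\cH)$ by Lemma~\ref{L-turbpoint}; that its orbit, being meager in the dense $G_\delta$ set $\FE(A,\cH)$, is meager in $\Rep(A,\cH)$; and that the orbit of any representation lying outside $\FE(A,\cH)$ is nowhere dense, again by Lemma~\ref{L-turbpoint}. Taken together these furnish a dense orbit, meagerness of every orbit, and a turbulent point, which is exactly what generic turbulence requires.
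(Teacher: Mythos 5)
Your proof is correct and follows essentially the same route as the paper: an isolated point of $\hat{A}$ would yield a closed ideal isomorphic to the compact operators on a separable Hilbert space, contradicting antiliminarity, so $\hat{A}_\iso=\emptyset$ and Theorem~\ref{T-turbulent} applies. The only differences are that the paper cites Lemma~1.3 of \cite{Wang} for this structural fact where you rederive it from Glimm's theorem, and that you spell out the routine passage (left implicit in the paper) from turbulence on the dense $G_\delta$ set $\FE(A,\cH)$ to generic turbulence of the action on $\Rep(A,\cH)$.
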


\begin{proof}
This follows from the theorem because the existence of an isolated point of $\hat{A}$ would
yield an ideal isomorphic to the compact operators on a separable Hilbert space \cite[Lemma 1.3]{Wang},
contradicting antiliminarity.
\end{proof}

If $A$ is a separable $C^*$-algebra such that $\hat{A}$ is countable, then every representation
of $\Rep (A,\cH )$ is a direct sum of irreducible representations \cite[Lemma 1.4]{Wang}, and
the associated multiplicity function $\hat{A} \to \{ 0,1,\dots ,\infty \}$ is a complete
invariant for unitary equivalence. So in this case the classification of elements in $\Rep (A,\cH )$
up to unitary equivalence is smooth. On the other hand:

\begin{theorem}\label{T-not classifiable}
Let $A$ be a separable $C^*$-algebra such that $\hat{A}$ is uncountable. Then
the elements of $\Rep (A,\cH )$ up to unitary equivalence do not admit classification by
countable structures.
\end{theorem}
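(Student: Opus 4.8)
The plan is to deduce nonclassifiability from turbulence via Theorem~\ref{T-turbulent}, using two transfer principles recalled in the introduction: first, a turbulent action on a Polish space has an orbit equivalence relation that does not admit classification by countable structures; second, if $E$ Borel reduces to $F$ and $F$ is classifiable then so is $E$, so that the obstruction is inherited by any Polish space that contains a given one as a $\cU(\cH)$-invariant Borel subset on which the action is turbulent (the inclusion being a reduction of the restricted relation). When $\hat{A}_\iso$ is not dense in $\hat{A}$ there is nothing to do: Theorem~\ref{T-turbulent} already asserts that the action of $\cU(\cH)$ on the invariant $G_\delta$ set $\FE(A,\cH)$ is turbulent, and since $\FE(A,\cH)$ is an invariant Borel subset of $\Rep(A,\cH)$, unitary equivalence on $\Rep(A,\cH)$ is not classifiable by countable structures. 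The whole difficulty is thus concentrated in the case where $\hat{A}_\iso$ \emph{is} dense in $\hat{A}$ while $\hat{A}$ is uncountable, in which Theorem~\ref{T-turbulent} produces a comeager orbit and hence no turbulence on all of $\Rep(A,\cH)$.

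To treat this case I would pass to a quotient whose spectrum has no isolated points. Since $A$ is separable, $\hat{A}$ is second countable, so the isolated points of any subspace form a countable set and the transfinite Cantor--Bendixson derivation $\hat{A} = \hat{A}^{(0)} \supseteq \hat{A}^{(1)} \supseteq \cdots$, obtained by repeatedly deleting isolated points and intersecting at limit stages, stabilizes at a countable ordinal. Deleting the (relatively open, countable) set of isolated points keeps each $\hat{A}^{(\alpha)}$ closed, and $\hat{A}\setminus\hat{A}^{(\alpha)}$ is a countable union of countable sets, so the perfect kernel $P=\bigcap_\alpha \hat{A}^{(\alpha)}$ is a closed, co-countable subset of $\hat{A}$ equal to its own derived set, i.e.\ with no isolated points in its relative topology. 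As $\hat{A}$ is uncountable and $\hat{A}\setminus P$ is countable, $P$ is uncountable, in particular nonempty. Under the correspondence between closed subsets of $\hat{A}$ and quotients of $A$ there is a closed ideal $J$ with $\widehat{A/J}=P$; setting $B=A/J$, the algebra $B$ is separable with $\hat{B}=P$, and $\hat{B}_\iso=\varnothing$ is not dense in the nonempty space $\hat{B}$.

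Applying Theorem~\ref{T-turbulent} to $B$ then shows the action of $\cU(\cH)$ on $\FE(B,\cH)$ is turbulent, so unitary equivalence on $\FE(B,\cH)$ is not classifiable by countable structures. To return to $A$, I would use that pullback along the quotient map $A\to B$ identifies $\Rep(B,\cH)$ with the closed $\cU(\cH)$-invariant set of representations in $\Rep(A,\cH)$ annihilating $J$, via a $\cU(\cH)$-equivariant homeomorphism, and is therefore a Borel reduction of unitary equivalence on $\Rep(B,\cH)$ to unitary equivalence on $\Rep(A,\cH)$. Consequently $\FE(B,\cH)$ embeds into $\Rep(A,\cH)$ as an invariant Borel subset carrying a turbulent action, and the transfer principles yield nonclassifiability on $\Rep(A,\cH)$. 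The step I expect to be the main obstacle is the Cantor--Bendixson analysis itself: $\hat{A}$ is only $T_0$ and need not be Hausdorff, so one cannot invoke the classical perfect set theorem and must verify directly, from second countability alone, that isolated points are countable, that the derivation terminates at a countable ordinal, and that the kernel is both uncountable and genuinely free of isolated points.
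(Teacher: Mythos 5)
Your proposal is correct and follows essentially the same route as the paper: pass to the perfect kernel of $\hat{A}$, realize it as the spectrum of a quotient $A/J$ with no isolated points, apply Theorem~\ref{T-turbulent} to that quotient, and pull back along the closed $\cU(\cH)$-equivariant embedding of $\Rep (A/J,\cH )$ into $\Rep (A,\cH )$. The paper avoids both your case split and the transfinite Cantor--Bendixson derivation by defining $P$ in one step as the set of condensation points of $\hat{A}$, for which second countability alone (no Hausdorff hypothesis) yields that $P$ is closed, perfect, and co-countable, thereby disposing of the obstacle you flag at the end.
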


\begin{proof}
By the Cantor-Bendixson theorem, the set $P$ of condensation points in $\hat{A}$
is perfect (and in particular closed) and its complement is countable. Since $\hat{A}$ is
uncountable, $P$ is nonempty. Thus there is an ideal $I\subseteq A$ such that $P$
is equal to the set $\hat{A}_I$ of all $\sigma\in\hat{A}$ for which $\sigma (I) = 0$, and
the map $h : \hat{A}_I \to \widehat{A/I}$ obtained by passing to the quotient is a
homeomorphism \cite[Sect.\ 3.2]{Dix}. Then $A/I$ is nontrivial and $\widehat{A/I}$ contains
no isolated points. By Theorem~\ref{T-turbulent} above and Corollary~3.19 of \cite{COER}
the elements of of $\Rep (A/I,\cH )$ up to unitary equivalence do not admit classification by
countable structures. Since $\Rep (A/I,\cH )$ can be view as the closed set of all
representations in $\Rep (A,\cH )$ which factor through $A/I$, we obtain the theorem.
\end{proof}

Finally we turn to the problem of classifying irreducible representations. By Glimm's
theorem (see Section~6.8 of \cite{Ped}), a separable $C^*$-algebra is of type I
if and only if the Mackey Borel structure on $\hat{A}$ is standard.
This is strengthened by the following result, which was
shown by Hjorth for countable discrete groups \cite{NSIDGR} by a different argument
that can also be applied to our more general setting.

\begin{theorem}\label{T-irred classification}
Let $A$ be a separable non-type I $C^*$-algebra. Then the irreducible representations
of $A$ on $\cH$ up to unitary equivalence do not admit classification by countable structures.
\end{theorem}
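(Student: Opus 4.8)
The plan is to exhibit a Borel reduction from a turbulent orbit equivalence relation into the relation of unitary equivalence on the Polish space $\Irr (A,\cH )$ of irreducible representations (a $G_\delta$ subset of $\Rep (A,\cH )$, on which unitary equivalence is exactly the orbit equivalence relation of the restricted conjugation action of $\cU (\cH )$, with quotient Borel structure the Mackey Borel structure on $\hat A$). Since classification by countable structures is preserved downward under Borel reducibility, and since the orbit equivalence relation of a generically turbulent action does not admit such a classification by the results of Section~3.2 of \cite{COER}, producing such a reduction yields the theorem.

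To build the turbulent family I would first locate a copy of the CAR algebra $M_{2^\infty} = \bigotimes_{n\in\Nb} M_2$ inside $A$. By Glimm's theorem (see Section~6.8 of \cite{Ped}), since $A$ is not type~I there is a $C^*$-subalgebra $B\subseteq A$ and a closed ideal $J$ of $B$ with $B/J \cong M_{2^\infty}$. On $M_{2^\infty}$ I would use the product pure states: for a point $\xi = (\xi_n )_n$ in the Polish space $X_0 = \prod_{n\in\Nb} S$, where $S$ is the unit sphere of $\Cb^2$, set $\omega_\xi = \bigotimes_n \omega_{\xi_n}$ with $\omega_{\xi_n}(x) = \langle x\xi_n , \xi_n\rangle$. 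Each $\omega_\xi$ restricts to a pure state on every $M_{2^n}$ and is therefore pure, so its GNS representation is irreducible, and $\xi\mapsto\omega_\xi$ is Borel. The relevant relation is the summable relation $E$ on $X_0$ defined by $\xi \, E \, \eta$ if and only if $\sum_n (1 - |\langle \xi_n , \eta_n\rangle |) < \infty$; by von Neumann's analysis of infinite (incomplete) tensor products this is precisely the condition for $\omega_\xi$ and $\omega_\eta$ to have unitarily equivalent GNS representations. The relation $E$ is the orbit equivalence relation of a continuous action of an $\ell^1$-type Polish group of coordinatewise rotations, and is generically turbulent; this is the sort of turbulent relation isolated by Hjorth and by Kechris and Sofronidis \cite{KS}.

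It remains to realize this family as irreducible representations of $A$ itself, in a Borel manner that both preserves and reflects unitary equivalence, and this is the main obstacle. The states $\omega_\xi$ pull back along $B\to B/J$ to pure states on $B$, which extend to pure states $\tilde\omega_\xi$ on $A$ by the pure state extension theorem, and the GNS representations $\rho_\xi$ of $\tilde\omega_\xi$ are irreducible representations of $A$. The difficulty is that an arbitrary choice of extensions need not respect $E$ in either direction. One must instead choose the extensions coherently, using the finite-dimensional approximating data supplied by Glimm's construction (a Glimm system of matrix units in $A$ lifting those of $M_{2^\infty}$), so that the whole family $\xi\mapsto\rho_\xi$ arises from a single inductive construction on $A$ and the unitary equivalence class of $\rho_\xi$ is governed by the tail behaviour of $\xi$. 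Arranging this so that $\rho_\xi \cong \rho_\eta$ if and only if $\xi \, E \, \eta$, with $\xi\mapsto\rho_\xi$ Borel, is exactly Hjorth's argument in \cite{NSIDGR}; the same construction applies once $M_{2^\infty}$ has been located inside $A$ by Glimm's theorem rather than inside a group $C^*$-algebra. This yields the required Borel reduction of the generically turbulent relation $E$ into unitary equivalence on $\Irr (A,\cH )$, completing the proof.
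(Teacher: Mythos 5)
Your route is genuinely different from the paper's, and it is the classical one: it is essentially Glimm's original argument for the failure of smoothness, upgraded by Hjorth in \cite{NSIDGR} to nonclassifiability by countable structures — locate a subquotient $B/J\cong M_{2^\infty}$ of $A$ via Glimm's theorem, parametrize product pure states by $\prod_n S(\Cb^2)$, identify unitary equivalence of their GNS representations with the summable relation $\sum_n(1-|\langle\xi_n,\eta_n\rangle|)<\infty$, and Borel-reduce that turbulent relation into $\Irr(A,\cH)$. The paper explicitly acknowledges in the surrounding text that this argument ``can also be applied to our more general setting,'' so the route is sound in principle; but be aware that the step you defer — choosing the pure state extensions from $B$ to $A$ coherently so that the map $\xi\mapsto\rho_\xi$ both preserves \emph{and reflects} equivalence (the reflection being indispensable, since a mere homomorphism into a turbulent relation gives no contradiction) — is where essentially all of the work lies, and your proposal only gestures at it. The paper's own proof avoids this entirely and is quite different in character: it takes an \emph{essential} irreducible representation $\pi$ supplied by Glimm's theorem, passes to the primitive quotient $B=A/\ker(\pi)$, observes that $\hat{B}$ has no isolated points (an isolated point would produce an ideal of compacts, contradicting essentiality, via \cite[Lemma 1.3]{Wang}), invokes its Theorem~\ref{T-turbulent} to get generic turbulence of the conjugation action of $\cU(\cH)$ on $\Rep(B,\cH)$, restricts to the invariant dense $G_\delta$ $\Irr(B,\cH)$, and embeds this as a closed invariant subset of $\Irr(A,\cH)$ before applying Corollary~3.19 of \cite{COER}. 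What the paper's approach buys is twofold: it reuses the Voiculescu-theorem-based local-orbit analysis of Section~\ref{S-rep} so that no delicate state-extension bookkeeping is needed, and it proves the stronger, intrinsic statement that the conjugation action is itself generically turbulent on a dense $G_\delta$ of irreducible representations, rather than merely importing an external turbulent relation. What your approach buys is independence from the machinery of Section~\ref{S-rep} and a concrete, hands-on family of representations — at the price of the coherent-extension argument, which you should either carry out or cite with precision to \cite{NSIDGR} (and ultimately to Glimm's construction in \cite[Sect.~6.8]{Ped}) if you want the proof to stand on its own.
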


\begin{proof}
Since $A$ is not of type I, by Glimm's theorem \cite[Thm.\ 6.8.7]{Ped}
there exists an essential irreducible representation
$\pi$ of $A$ on $\cH$.
Set $B = A/\ker (\pi )$. Write $\Irr (B,\cH )$ for the set of irreducible representations
in $\Rep (B,\cH )$, which is a dense $G_\delta$ by Lemma~\ref{L-turbpoint} and
\cite[Prop.\ 3.7.4]{Dix}.
Since $B$ admits a faithful irreducible representation
its spectrum $\hat{B}$ contains no isolated points and so by Theorem~\ref{T-turbulent} the
action of $\cU (\cH)$ on $\Rep (B,\cH )$, and hence also on $\Irr (B,\cH )$, is generically turbulent.
We may view $\Irr (B,\cH )$ as the closed subset of $\Irr (A,\cH )$ consisting of those irreducible
representations of $A$ which factor through $B$, and so we conclude by Corollary~3.19 of \cite{COER}
that the irreducible representations of $A$ do not admit classification by countable structures.
\end{proof}


\section{Representations of groups}\label{S-groups}

Here we record some consequences of Section~\ref{S-rep} for unitary group representations.
Let $G$ be a second countable locally compact group.
We write $\Rep (G,\cH )$ for $\Rep (C^* (G),\cH )$, and we denote by $\WM (G,\cH )$ the subset of
representations in $\Rep (G,\cH )$ which are weak mixing, i.e., which have no nonzero finite-dimensional
subrepresentations. The subset $\WM (G,\cH )$ is a $G_\delta$ \cite{BerRos} and
it is dense precisely when $G$ has property T \cite{KP,Kechris}.

As mentioned previously, Theorem~\ref{T-irred classification} specializes in the group setting
to the following theorem of Hjorth \cite{NSIDGR}.
Actually Hjorth proved the result in the discrete case but his argument works more generally using Glimm's theorem.

\begin{theorem}[Hjorth]
Suppose that $G$ is not of type I. Then the irreducible representations
of $G$ do not admit classification by countable structures.
\end{theorem}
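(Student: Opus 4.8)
The plan is to deduce this group-theoretic statement directly from Theorem~\ref{T-irred classification} by translating the type I condition from the group $G$ to its full group $C^*$-algebra $C^*(G)$. The key observation is definitional: a second countable locally compact group $G$ is said to be of type I precisely when $C^*(G)$ is a type I $C^*$-algebra, and under the identification $\Rep(G,\cH) = \Rep(C^*(G),\cH)$ established at the start of Section~\ref{S-groups}, the irreducible representations of $G$ on $\cH$ correspond exactly to the irreducible representations of $C^*(G)$ on $\cH$. Moreover this correspondence is a homeomorphism for the relevant topologies and is equivariant for the $\cU(\cH)$-conjugation action, so it respects unitary equivalence and hence the isomorphism relation whose classifiability is in question.

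First I would record that since $G$ is not of type I, the $C^*$-algebra $C^*(G)$ is a separable non-type I $C^*$-algebra; separability follows from $G$ being second countable. Then I would invoke Theorem~\ref{T-irred classification} applied to $A = C^*(G)$, which tells us that the irreducible representations of $C^*(G)$ on $\cH$ up to unitary equivalence do not admit classification by countable structures. Finally I would transport this conclusion back across the identification $\Rep(G,\cH) = \Rep(C^*(G),\cH)$ to obtain the statement for $G$. The Borel reduction witnessing nonclassifiability for $C^*(G)$ is literally a Borel reduction for $G$ once one applies the canonical bijection, so no new construction is required.

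I do not expect a genuine obstacle here, since the whole content is the dictionary between unitary representations of $G$ and nondegenerate representations of $C^*(G)$, which is standard and already implicit in the paper's notation $\Rep(G,\cH) := \Rep(C^*(G),\cH)$. The one point that deserves a sentence of care is the equivalence ``$G$ not of type I'' $\Longleftrightarrow$ ``$C^*(G)$ not of type I,'' which I would justify by citing the definition of type I for groups via the enveloping $C^*$-algebra (as in \cite{Dix}), together with the matching of the respective Mackey Borel structures on $\hat{G}$ and $\widehat{C^*(G)}$. The remark already appended to the theorem statement---that although Hjorth treated the discrete case, his argument applies more generally via Glimm's theorem---signals that the author intends exactly this reduction, so the proof is essentially a one-line application of Theorem~\ref{T-irred classification} once the translation is in place.
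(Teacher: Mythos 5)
Your proposal is correct and is exactly the paper's approach: the text introduces this theorem as the specialization of Theorem~\ref{T-irred classification} to $A = C^*(G)$, using the identification $\Rep(G,\cH) = \Rep(C^*(G),\cH)$ and the fact that $G$ is of type I if and only if $C^*(G)$ is. The additional care you take over separability and the equivariance of the correspondence is sound but routine, and the paper leaves it implicit.
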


By Theorem~\ref{T-not classifiable}, the orbit equivalence relation of $\cU (\cH )$ acting on $\Rep (G,\cH )$
either does not admit classification by countable structures or is smooth according to whether
$\hat{G}$ is uncountable or countable. As an example of Fell illustrates, it is possible
for a noncompact second countable locally compact group to have countable dual
(see Section~IV of \cite{Baggett}). On the other hand, we can deduce nonclassifiability for
countable structures when $G$ is countably infinite or a noncompact separable Lie group, and in these cases
we can furthermore restrict to representations that are weakly contained in $\lambda_G$, as we now explain.

We write $\hat{G}_\lambda$ for the reduced dual of $G$, i.e., the closed set of all elements in $\hat{G}$
which are weakly contained in $\lambda_G$. We denote by $\Rep_\lambda (G,\cH )$ the closed set of
representations in $\Rep (G,\cH )$ which are weakly contained in $\lambda_G$. By Proposition~H.2 in
\cite{Kechris} this is equal to the closure of the orbit of $\lambda_G^{\oplus\Nb}$ viewed as a
representation on $\cH$ via some unitary equivalence.
We write $\WM_\lambda (G,\cH )$ for $\WM (G,\cH ) \cap \Rep_\lambda (G,\cH )$.

The following is well known.

\begin{lemma}\label{L-no irred subrep}
Suppose that $G$ is countably infinite. Then the left regular
representation $\lambda_G$ has no irreducible subrepresentations and $\hat{G}_\lambda$ has no isolated points.
\end{lemma}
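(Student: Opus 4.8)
The plan is to realize $\lambda_G$ on $\ell^2(G)$, so that its commutant is the right group von Neumann algebra $\rho(G)''$, which is a \emph{finite} von Neumann algebra carrying the faithful normal trace $\tau(x)=\langle x\delta_e,\delta_e\rangle$. Closed $\lambda_G$-invariant subspaces correspond to projections $p\in\rho(G)''$, and $\lambda_G|_{p\ell^2(G)}$ is irreducible precisely when $p$ is minimal, i.e. $p\rho(G)''p=\Cb p$. Thus the first assertion is equivalent to the statement that $\rho(G)''$ has no minimal projection, and I would prove it by deriving a contradiction from the existence of one. For the second assertion I would show that an isolated point of $\hat{G}_\lambda$ would likewise produce a minimal projection, reducing it to the first.

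For a hypothetical minimal projection $p$, first note that $\xi_0:=p\delta_e\neq 0$: otherwise $p\delta_g=\lambda_g p\delta_e=0$ for every $g$ (since $p$ commutes with $\lambda$), forcing $p=0$; and $\|\xi_0\|^2=\tau(p)>0$. I would then run two incompatible computations, setting $E=p\ell^2(G)$. The first is a trace count: because $p\in\lambda(G)'$ one has $P_E\delta_g=\lambda_g\xi_0$ and hence $\langle p\delta_g,\delta_g\rangle=\|\xi_0\|^2$ for all $g$, so $\Tr(p)=\sum_g\langle p\delta_g,\delta_g\rangle=\infty$ and $\dim E=\infty$. The second uses finiteness: the normal representation $\theta\colon M:=\lambda(G)''\to\cB(E)$, $x\mapsto xp|_E$, has image with commutant $p\rho(G)''p|_E=\Cb\,\mathrm{id}_E$ by minimality, so $\theta(M)=\cB(E)$; since $\theta(M)\cong Mz$ for the central support $z$ of $p$, it is a direct summand of the finite algebra $M$ and therefore finite, forcing $\dim E<\infty$. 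These two conclusions contradict each other, so $\rho(G)''$ has no minimal projection.

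For the second assertion I would argue by contradiction from an isolated point $\pi\in\hat{G}_\lambda=\widehat{C^*_\lambda(G)}$. By \cite[Lemma 1.3]{Wang} (as already invoked for the Corollary) an isolated point yields a closed ideal $J\subseteq C^*_\lambda(G)$ with $J\cong\cK(\cH_\pi)$. Passing to weak closures inside $\lambda(G)''$ gives $\overline{J}^{\,w}=\lambda(G)''z=\cB(\cH_\pi)$ for a central projection $z$; finiteness of $\lambda(G)''$ again forces $\dim\cH_\pi<\infty$, so $z$ is central (hence commutes with both $\lambda$ and $\rho$) and $z\ell^2(G)$ is a finite-dimensional $\lambda_G$-invariant subspace. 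Decomposing it produces an irreducible subrepresentation of $\lambda_G$, contradicting the first assertion; hence $\hat{G}_\lambda$ has no isolated points.

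The main obstacle is conceptual rather than computational: neither count alone suffices, since the trace argument only rules out finite-dimensional subrepresentations while the finiteness argument only rules out infinite-dimensional ones, and the contradiction emerges only by combining them. The one genuinely nontrivial input is the finiteness of the group von Neumann algebra together with the extraction of an honest finite-dimensional subrepresentation from a minimal projection (or from an elementary ideal of the reduced C*-algebra); the remaining steps are short once this dictionary is in place.
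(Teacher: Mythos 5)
Your argument is correct in substance but takes a genuinely different route from the paper. The paper handles both claims by citation: Corollary~5.12 of Rieffel's paper shows a countably infinite group has no square-integrable irreducible representations, which gives the first assertion since square-integrability is equivalent to being a subrepresentation of $\lambda_G$, and Corollary~1.9 of Wang then yields the second. You instead give an essentially self-contained operator-algebraic proof resting on the finiteness of the group von Neumann algebra: for a hypothetical minimal projection $p\in\lambda(G)'$, the clash between $\Tr(p)=\sum_{g\in G}\|p\delta_e\|^2=\infty$ and the finiteness of $\cB(p\ell^2(G))\cong Mz$ is exactly right, and your reduction of the isolated-point statement to the first assertion via Wang's Lemma~1.3 (an isolated point produces an elementary ideal $J\cong\cK(\cH_\pi)$ whose weak closure is a central summand $Mz\cong\cB(\cH_\pi)$) is sound. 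What this buys is transparency and independence from the square-integrability machinery, at the cost of invoking the induction/reduction dictionary for von Neumann algebras. One local slip in the second part: after concluding $\dim\cH_\pi<\infty$ you assert that $z\ell^2(G)$ is a \emph{finite-dimensional} invariant subspace. It need not be: $z\ell^2(G)\cong\cH_\pi\otimes\cH'$ with $Mz$ acting as $\cB(\cH_\pi)\otimes\Cb$, and nothing bounds the multiplicity space $\cH'$. What is true, and is all you need, is that $Mz$ is a finite-dimensional algebra, so $z\ell^2(G)$ decomposes into finite-dimensional irreducible $\lambda_G$-invariant subspaces (copies of $\pi$), each of which contradicts the first assertion; your closing clause ``decomposing it produces an irreducible subrepresentation'' therefore stands once ``finite-dimensional'' is deleted from the description of $z\ell^2(G)$ itself.
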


\begin{proof}
By Corollary~5.12 of \cite{Rieffel}, $G$ has no square-integrable irreducible representations
and thus, since square-integrability for a representation is equivalent to being a subrepresentation
of $\lambda_G$, $G$ has no irreducible subrepresentations. Consequently
$\hat{G}_\lambda$ has no isolated points by Corollary~1.9 of \cite{Wang}.
\end{proof}

\begin{theorem}\label{T-reg turbulent}
Suppose that $G$ is countably infinite. Then the action of $\cU (\cH )$ on\linebreak
$\Rep_\lambda (G,\cH )$ is generically turbulent. Furthermore, the action is turbulent precisely
when $C^*_\red (G)$ is simple.
\end{theorem}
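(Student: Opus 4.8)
The plan is to reduce Theorem~\ref{T-reg turbulent} to the already-established Theorem~\ref{T-turbulent} by applying the latter to the $C^*$-algebra $C^*_\red(G)$. The key observation is that $\Rep_\red(G,\cH)$ is, by the cited Proposition~H.2 of \cite{Kechris}, precisely the closed set of representations of $C^*(G)$ weakly contained in $\lambda_G$, which is the same as the space $\Rep(C^*_\red(G),\cH)$ of nondegenerate representations of the reduced group $C^*$-algebra, since the representations of $C^*(G)$ that factor through $C^*_\red(G)$ are exactly those weakly contained in $\lambda_G$. Under this identification the $\cU(\cH)$-action by $\Ad$ on $\Rep_\red(G,\cH)$ matches the conjugation action on $\Rep(C^*_\red(G),\cH)$, so the turbulence statements for the two actions coincide.

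With this reduction in hand, the generic turbulence assertion follows once I verify that the hypothesis of Theorem~\ref{T-turbulent} is met, namely that the set $\widehat{C^*_\red(G)}_\iso = (\hat{G}_\red)_\iso$ of isolated points is not dense in $\hat{G}_\red$. In fact I would establish the stronger fact that $\hat{G}_\red$ has \emph{no} isolated points at all, which is exactly the content of Lemma~\ref{L-no irred subrep}: since $G$ is countably infinite it has no square-integrable (equivalently, no irreducible sub-) representation, hence $\lambda_G$ contains no irreducible subrepresentation, and by Corollary~1.9 of \cite{Wang} this forces $\hat{G}_\red$ to be perfect. An empty set of isolated points is vacuously nowhere dense, so Theorem~\ref{T-turbulent} applies and gives that the restriction of the action to the faithful essential representations $\FE(C^*_\red(G),\cH)$ is turbulent; by Theorem~3.21 of \cite{COER} this invariant dense $G_\delta$ of turbulence witnesses generic turbulence of the action on all of $\Rep_\red(G,\cH)$.

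For the second, sharper claim I would again invoke the final sentence of Theorem~\ref{T-turbulent}: the full action on $\Rep(B,\cH)$ is turbulent precisely when $B$ is simple and not isomorphic to the compact operators on a Hilbert space. Taking $B = C^*_\red(G)$, I must rule out the compact-operator case: since $G$ is countably infinite, $\hat{G}_\red$ is perfect and in particular has more than one point (indeed is uncountable), whereas the spectrum of $\cK(\cH')$ is a single point, so $C^*_\red(G)$ can never be isomorphic to an algebra of compact operators. Therefore the criterion collapses to the single condition that $C^*_\red(G)$ be simple, which is exactly the stated dichotomy. The main subtlety I anticipate is the bookkeeping in the identification $\Rep_\red(G,\cH) \cong \Rep(C^*_\red(G),\cH)$ — one must confirm that this is a $\cU(\cH)$-equivariant homeomorphism of Polish spaces, so that turbulence, meagerness of orbits, and the $G_\delta$ structure transfer faithfully; but this is routine given the universal property of $C^*_\red(G)$ and the description of $\Rep_\red(G,\cH)$ as an orbit closure recalled just before the lemma.
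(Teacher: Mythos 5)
Your proposal is correct and follows essentially the same route as the paper, whose proof is simply the one-line citation of Lemma~\ref{L-no irred subrep} and Theorem~\ref{T-turbulent}; you have merely made explicit the identification $\Rep_\lambda (G,\cH )\cong\Rep (C^*_\red (G),\cH )$ and the verification that the compact-operator alternative is excluded because $\hat{G}_\lambda$ has no isolated points. (The parenthetical claim that $\hat{G}_\lambda$ is uncountable is unnecessary for the argument and is not needed to rule out $C^*_\red (G)\cong\cK (\cH ')$, since a singleton spectrum would already contain an isolated point.)
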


\begin{proof}
The result then follows by Lemma~\ref{L-no irred subrep} and Theorem~\ref{T-turbulent}.
\end{proof}

Combining Theorem~\ref{T-reg turbulent} with Corollary~3.19 of \cite{COER} yields:

\begin{theorem}\label{T-reg nonclassifiable}
Suppose that $G$ is countably infinite. Then the elements of $\Rep_\lambda (G,\cH )$ up to unitary
equivalence do not admit classification by countable structures.
\end{theorem}

\begin{remark}\label{R-WM}
In Theorems~\ref{T-reg turbulent} and \ref{T-reg nonclassifiable} we can replace
$\Rep_\lambda (G,\cH )$ by $\WM_\lambda (G,\cH )$, as follows from the following fact,
which we record as a proposition for future reference.
\end{remark}

\begin{proposition}\label{P-WM}
For a second countable locally compact group $G$, $\WM_\lambda (G,\cH )$
is a dense $G_\delta$ subset of $\Rep_\lambda (G,\cH )$.
\end{proposition}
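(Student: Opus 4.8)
I need to show that $\WM_\lambda(G,\cH)$ is a dense $G_\delta$ subset of $\Rep_\lambda(G,\cH)$. The $G_\delta$ claim is immediate from the excerpt: $\WM(G,\cH)$ is a $G_\delta$ by \cite{BerRos}, and $\Rep_\lambda(G,\cH)$ is closed in $\Rep(G,\cH)$, so $\WM_\lambda(G,\cH) = \WM(G,\cH) \cap \Rep_\lambda(G,\cH)$ is a $G_\delta$ in the Polish space $\Rep_\lambda(G,\cH)$.

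The substance is density. The plan is to produce, inside $\Rep_\lambda(G,\cH)$, a single representation that is simultaneously weakly mixing and has dense orbit, and then invoke the machinery already built. The natural candidate is $\lambda_G^{\oplus\Nb}$ (viewed on $\cH$ via a unitary equivalence), whose orbit closure is all of $\Rep_\lambda(G,\cH)$ by Proposition~H.2 of \cite{Kechris}, as recorded just before the proposition. First I would check that $\lambda_G$, and hence $\lambda_G^{\oplus\Nb}$, is weakly mixing. For $G$ noncompact this follows because $\lambda_G$ has no finite-dimensional subrepresentations: a finite-dimensional subrepresentation would be square-integrable, but the argument of Lemma~\ref{L-no irred subrep} (via Corollary~5.12 of \cite{Rieffel}) shows $\lambda_G$ has no square-integrable irreducible subrepresentations, and a finite-dimensional subrepresentation would decompose into finite-dimensional irreducibles sitting inside $\lambda_G$. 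The compact case is handled separately, since there $\lambda_G$ is a direct sum of finite-dimensional irreducibles and is emphatically not weakly mixing.

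Granting that $\lambda_G^{\oplus\Nb} \in \WM_\lambda(G,\cH)$ for noncompact $G$, density of $\WM_\lambda(G,\cH)$ would follow if its orbit were dense, but $\WM(G,\cH)$ is merely a $G_\delta$, so I instead argue directly that $\WM_\lambda(G,\cH)$ is dense. The cleanest route is to show that the $\cU(\cH)$-orbit of $\lambda_G^{\oplus\Nb}$ already lies in $\WM_\lambda(G,\cH)$ and is dense: weak mixing is a unitary-conjugacy invariant, so the whole orbit consists of weakly mixing representations weakly contained in $\lambda_G$, and its closure is all of $\Rep_\lambda(G,\cH)$. Hence $\WM_\lambda(G,\cH)$ contains a dense set and is itself dense. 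For the remaining case where $G$ is compact, every irreducible representation is finite-dimensional, so $\WM_\lambda(G,\cH)$ is empty while $\Rep_\lambda(G,\cH)$ is nonempty; I expect this case to be excluded at the outset, either by restricting to noncompact $G$ or by observing that the statement as intended concerns the situation where weakly mixing representations exist. The main obstacle is therefore not the category computation but pinning down exactly why $\lambda_G$ is weakly mixing for general noncompact second countable locally compact $G$, i.e. controlling all finite-dimensional subrepresentations simultaneously rather than just irreducible ones — once that structural fact about $\lambda_G$ is in hand, density reduces to the already-established orbit-closure identity.
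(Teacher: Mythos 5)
Your overall strategy is genuinely different from the paper's and is viable, but the one step you yourself flag as the ``main obstacle'' is exactly where your proposed argument breaks down. You want to deduce that $\lambda_G$ has no nonzero finite-dimensional subrepresentations for noncompact $G$ by arguing that such a subrepresentation would be square-integrable and that $\lambda_G$ has no square-integrable irreducible subrepresentations. That last assertion is false for general noncompact second countable locally compact $G$: the discrete series representations of $\SL (2,\Rb )$ are square-integrable irreducible subrepresentations of the regular representation. The argument of Lemma~\ref{L-no irred subrep} via Corollary~5.12 of \cite{Rieffel} is specific to infinite discrete groups and does not transfer. The fact you actually need --- that $\lambda_G$ is weakly mixing for noncompact $G$ --- is nevertheless true and elementary by a different argument: matrix coefficients $g\mapsto\langle\lambda_g\xi ,\eta\rangle$ of $L^2$-vectors lie in $C_0 (G)$, while for a finite-dimensional invariant subspace $V\subseteq L^2 (G)$ with orthonormal basis $e_1 ,\dots ,e_n$ the function $g\mapsto\sum_{i,j} |\langle\lambda_g e_i ,e_j\rangle |^2$ is identically $n$ by unitarity, which for noncompact $G$ forces $n=0$. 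With that substitution your proof closes: the orbit of $\lambda_G^{\oplus\Nb}$ is dense in $\Rep_\lambda (G,\cH )$, consists of weakly mixing representations weakly contained in $\lambda_G$, and hence lies in $\WM_\lambda (G,\cH )$, so the latter is dense; the $G_\delta$ part is as you say.

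For comparison, the paper splits along amenable versus nonamenable rather than compact versus noncompact. For amenable $G$ it notes $\Rep_\lambda (G,\cH ) = \Rep (G,\cH )$ and quotes Theorem~2.5 of \cite{BerRos} for the density of $\WM (G,\cH )$; for nonamenable $G$ it observes that $\WM_\lambda (G,\cH )$ is all of $\Rep_\lambda (G,\cH )$, since a finite-dimensional subrepresentation $\pi$ of an element of $\Rep_\lambda (G,\cH )$ would make $\pi\otimes\bar{\pi}$, which contains the trivial representation, weakly contained in $\lambda_G$, contradicting nonamenability. Both proofs implicitly exclude compact $G$, where $\WM_\lambda (G,\cH )$ is empty while $\Rep_\lambda (G,\cH )$ is not; you are right to flag that. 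Your route avoids invoking \cite{BerRos} for density, at the cost of needing the $C_0$ matrix coefficient argument above in place of the square-integrability claim.
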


\begin{proof}
If $G$ is amenable then $\Rep_\lambda (G,\cH ) = \Rep (G,\cH )$ and, by Theorem~2.5 of \cite{BerRos},
$\WM (G,\cH )$ is a dense $G_\delta$ subset of $\Rep (G,\cH )$.
If $G$ is nonamenable then every element of $\Rep_\lambda (G,\cH )$ is weakly mixing, for
if an element of $\Rep_\lambda (G,\cH )$ contains a finite-dimensional subrepresentation $\pi$
then $\pi\otimes\bar{\pi}$ contains the trivial representation and is weakly contained in $\lambda_G$,
contradicting nonamenability.
\end{proof}

Theorem~2.5 of \cite{Baggett} asserts that a separable Lie group whose reduced dual is countable
must be compact, and so by Theorem~\ref{T-not classifiable} we can conclude the following.

\begin{theorem}\label{T-Lie nonclassifiable}
Let $G$ be a separable noncompact Lie group. Then the elements of\linebreak
$\Rep_\lambda (G,\cH )$ up to unitary
equivalence do not admit classification by countable structures.
\end{theorem}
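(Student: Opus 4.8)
The plan is to reduce the statement to Theorem~\ref{T-not classifiable} applied to the reduced group $C^*$-algebra. First I would set $A = C^*_\red (G)$, which is separable because $G$ is second countable, and identify $\Rep_\lambda (G,\cH )$ with $\Rep (A,\cH )$. This identification comes from the standard correspondence between nondegenerate representations of $C^*(G)$ and strongly continuous unitary representations of $G$: a representation in $\Rep (G,\cH )$ is weakly contained in $\lambda_G$ exactly when it annihilates $\ker \lambda_G$ and hence factors through the quotient $C^*(G)/\ker\lambda_G = C^*_\red (G)$. Since this correspondence is $\cU (\cH )$-equivariant and a homeomorphism for the respective Polish topologies (the basic open sets being cut out by matrix coefficients, which descend along the surjection $C^*(G)\to C^*_\red (G)$), it carries the unitary-equivalence relation on $\Rep_\lambda (G,\cH )$ to the one on $\Rep (A,\cH )$. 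Thus it suffices to prove nonclassifiability for $\Rep (A,\cH )$.

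Next I would identify the spectrum: the spectrum $\hat{A}$ of $A = C^*_\red (G)$ is precisely the reduced dual $\hat{G}_\lambda$, the set of irreducible representations of $G$ weakly contained in $\lambda_G$. With these two identifications in hand, Theorem~\ref{T-not classifiable} reduces the problem to showing that $\hat{G}_\lambda$ is uncountable.

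For this I would invoke Baggett's result (Theorem~2.5 of \cite{Baggett}), which states that a separable Lie group with countable reduced dual must be compact. Since $G$ is noncompact by hypothesis, $\hat{G}_\lambda$ cannot be countable, so it is uncountable. Applying Theorem~\ref{T-not classifiable} to $A = C^*_\red (G)$ then yields that the elements of $\Rep (A,\cH )$, and hence via the above identification the elements of $\Rep_\lambda (G,\cH )$, up to unitary equivalence do not admit classification by countable structures.

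The argument contains no genuine obstacle once the external inputs are granted: the whole weight of the theorem rests on Baggett's Theorem~2.5, which supplies the uncountability of the reduced dual, and on Theorem~\ref{T-not classifiable}, which converts uncountability of the spectrum into nonclassifiability. The only point requiring a little care is the bookkeeping of the identifications $\Rep_\lambda (G,\cH ) = \Rep (C^*_\red (G),\cH )$ and $\widehat{C^*_\red (G)} = \hat{G}_\lambda$, making sure they respect both the Polish topology and the $\cU (\cH )$-action so that the reduction preserves the classification problem; but these are standard facts about reduced group $C^*$-algebras.
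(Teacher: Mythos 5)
Your proposal is correct and follows exactly the route the paper intends: the paper's (one-sentence) justification is precisely Baggett's Theorem~2.5 giving uncountability of $\hat{G}_\lambda$ for a noncompact separable Lie group, combined with Theorem~\ref{T-not classifiable} applied to $C^*_\red (G)$ via the standard identification of $\Rep_\lambda (G,\cH )$ with $\Rep (C^*_\red (G),\cH )$ and of $\widehat{C^*_\red (G)}$ with $\hat{G}_\lambda$. Your additional care with the equivariance and topological compatibility of these identifications is a welcome elaboration of details the paper leaves implicit.
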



\section{Trace-preserving actions}\label{S-nonclassifiability}

Let $M$ be a von Neumann algebra with separable predual, and let $\tau$ be a faithful normal tracial state
on $M$. We write $\| \!\cdot \!\|_2$ for the $\tau$-norm on $M$, i.e., $\| a \|_2 = \tau (a^* a)^{1/2}$.
Let $G$ be a second countable locally compact group.
We denote by $\Act (G,M,\tau )$ the Polish space of continuous $\tau$-preserving actions of $G$ on $M$
whose topology has as a basis the sets
\[ Y_{\alpha ,K , \Omega ,\varepsilon} =
\{ \beta\in\Act (G,M,\tau ) : \| \beta_s (a) - \alpha_s (a) \|_2 < \varepsilon
\text{ for all } s\in K \text{ and } a\in\Omega \} \]
where $\alpha\in\Act (G,M,\tau )$, $K$ is a compact subset of $G$, $\Omega$ is a
finite subset of $M$, and $\varepsilon > 0$.
We equip the group $\Aut (M,\tau )$ of $\tau$-preserving automorphisms of $M$ with the Polish topology
which has as a basis the sets
\[ Z_{\alpha , \Omega ,\varepsilon} =
\{ \beta\in\Aut (R,G) : \| \beta (a) - \alpha (a) \|_2 < \varepsilon
\text{ for all } a\in\Omega \} \]
where $\alpha\in\Aut (R,G)$, $\Omega$ is a finite subset of $M$, and $\varepsilon > 0$.
We have a continuous action of $\Aut (M,\tau )$ on $\Act (G,M,\tau )$ given by
$(\gamma\cdot\alpha )_s (a) = (\gamma\circ\alpha_s \circ\gamma^{-1} )(a)$
for all $s\in G$, $a\in A$, $\gamma\in\Aut (M,\tau )$, and $\alpha\in\Act (G,M,\tau )$.

For every $\alpha\in\Act (G,M,\tau )$ we write $\kappa_\alpha$ for the associated
the unitary representation of $G$ on the GNS Hilbert space $L^2 (M,\tau )$
given by $\pi_\sigma (s) a\xi = \alpha_s (a) \xi$ for $a\in M$, where $\xi$ is the canonical cyclic
vector and $M$ is viewed as acting on $L^2 (M,\tau )$ via left multiplication. The
restriction of $\kappa_\alpha$ to $L^2 (M,\tau )\ominus\Cb\unit$ will be denoted $\kappa_{\alpha ,0}$.

An action $\alpha\in\Act (G,M,\tau )$ is said to be {\it ergodic} if $\kappa_{\alpha ,0}$
is ergodic (i.e., if $\kappa_{\alpha ,0}$ has no nonzero $G$-invariant vectors),
and {\it weakly mixing} if $\kappa_{\alpha ,0}$ is weakly mixing (i.e.,
if $\kappa_{\alpha ,0}$ has no nonzero finite-dimensional subrepresentations).
See \cite[App.\ D]{Vaes} for some standard characterizations of weak mixing for actions.
We write $\WM (G,M,\tau )$ for the set of weakly mixing actions in $\Act (G,M,\tau )$. This
is a $G_\delta$ set, as the proof of Proposition~2.3 in \cite{BerRos} shows.

\subsection{Freeness}\label{SS-freeness}
An automorphism $\theta$ of a von Neumann algebra $M$ is said to be
{\em properly outer} if for every nonzero $\theta$-invariant projection $p$
the restriction of $\theta$ to $pMp$ is not inner \cite{Kallman}\cite[Defn.\ XVII.1.1]{Tak3}.
We may equivalently quantify over $\theta$-invariant projections in the
centre of $M$ (see the comment after Theorem~XVII.1.2 in \cite{Tak3}).
An action $\alpha$ of $G$ on $M$ is said to be {\em free} if
$\alpha_s$ is properly outer for every $s\in G\setminus \{ e \}$.
In the commutative case this is equivalent to
the usual definition for actions on probability spaces.
The aim of this subsection is to show that when $G$ is countable the free actions
form a $G_\delta$ subset of $\Act (G,M,\tau )$, which was observed by Glasner and King
for measure-preserving tranformations of a standard atomless probability space \cite{GK}.
To this end we will establish in Lemma~\ref{L-properly outer} some characterizations of
proper outerness in the tracial case.
Compare \cite[Thm.\ 1.2.1]{OCC} and \cite[Thm.\ 3.3]{SMASA}.

An example of a noncommutative free action in the case that $G$ is countably infinite
is the Bernoulli shift $\beta$ on the weak operator closure of $M_n^{\otimes G}$ (with $n\geq 2$)
in the tracial representation, which is isomorphic to $R$.
The freeness of $\beta$ can be seen as follows.
If $s$ is an element of $G$ of infinite order then it is well known and easy to check
that the automorphism $\beta_s$ is mixing and in particular ergodic, in which case $\beta_s$
is not inner, for otherwise any unitary witnessing the innerness would be different from $\unit$
and fixed by $\beta_s$. If $s$ is an element of $G\setminus \{ e \}$ of finite
order then $\beta_s$ is conjugate to an automorphism as in the statement of the following
proposition and hence is not inner. Thus $\beta$ is free.

\begin{proposition}\label{P-not inner}
Let $n\geq 2$ be an integer and let $\alpha$ be an automorphism of $M_n$ which
is not the identity. Let $\theta$ be the extension of
$\alpha^{\otimes\Nb}$ on the weak operator closure $R$ of $M_n^{\otimes\Nb}$
in the tracial representation. Then $\theta$ is not inner.
\end{proposition}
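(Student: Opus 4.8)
The plan is to argue by contradiction. Suppose that $\theta = \Ad V$ for some unitary $V\in R$. Since $\alpha\neq\id$, I would first fix an element $a\in M_n$ with $\|a\|_\infty\leq 1$ and $\alpha(a)\neq a$, and set $\delta=\|\alpha(a)-a\|_2>0$, where the $\tau$-norm on $M_n$ is taken with respect to the normalized trace. The whole point will be to contradict the positivity of $\delta$ by exploiting the local tensor structure of $R$.

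For each $N$ let $E_N$ denote the $\tau$-preserving conditional expectation of $R$ onto the subfactor $M_n^{\otimes\{1,\dots,N\}}$ generated by the first $N$ tensor slots. Since the algebraic tensor product $\bigcup_N M_n^{\otimes\{1,\dots,N\}}$ is $\|\!\cdot\!\|_2$-dense in $R$, one has $\|V-E_N(V)\|_2\to 0$, and each $E_N$ is a contraction for $\|\!\cdot\!\|_\infty$. Thus, given $\varepsilon>0$, I would choose $N$ so that $W:=E_N(V)$ satisfies $\|V-W\|_2<\varepsilon$ and $\|W\|_\infty\leq 1$. For $k>N$ let $a_k\in R$ be the element equal to $a$ in the $k$-th tensor slot and to $\unit$ elsewhere. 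The key observation is locality: $W$ lives in the first $N$ factors while $a_k$ lives in the $k$-th factor, so $W$ commutes with $a_k$ and $W a_k W^*=a_k$. Using that left and right multiplication by a unitary preserves $\|\!\cdot\!\|_2$, together with the routine bounds $\|xy\|_2\leq\|x\|_2\|y\|_\infty$ and $\|xy\|_2\leq\|x\|_\infty\|y\|_2$, the estimate I expect is
\[
\|\theta(a_k)-a_k\|_2 = \|V a_k V^* - W a_k W^*\|_2
\leq \|(V-W)a_k V^*\|_2 + \|W a_k (V^*-W^*)\|_2 \leq 2\varepsilon .
\]
On the other hand $\theta(a_k)=\alpha(a)_k$, so the left-hand side equals $\|(\alpha(a)-a)_k\|_2=\delta$, a positive constant independent of $k$ and $\varepsilon$. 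Hence $\delta\leq 2\varepsilon$ for every $\varepsilon>0$, forcing $\delta=0$ and contradicting the choice of $a$. Therefore $\theta$ cannot be inner.

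I do not anticipate a serious obstacle here: the only substantive ingredient is the commutation $W a_k W^*=a_k$, which reflects the independence of disjoint tensor factors, and the $\|\!\cdot\!\|_2$ inequalities are standard facts about finite tracial von Neumann algebras. The mild technical care will be in organizing the approximation of $V$ by a finitely supported element via the conditional expectation (ensuring both $\|\!\cdot\!\|_2$-closeness and the $\|\!\cdot\!\|_\infty$-contractivity used in the cross terms), and in recording that $\|(\alpha(a)-a)_k\|_2$ is genuinely constant in $k$ because it is computed entirely within a single copy of $M_n$.
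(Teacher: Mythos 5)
Your argument is correct in outline and genuinely different from the paper's. The paper writes $\alpha = \Ad u$, notes that $\theta$ restricts to $\Ad u^{\otimes [1,k]}$ on $M_n^{\otimes [1,k]}$, and computes the conditional expectations $E_k(v)$ of the implementing unitary explicitly: a centre argument in the factor $M_n^{\otimes [1,k]}$ forces $E_k(v)=\lambda_k u^{\otimes [1,k]}$, the compatibility $E_k = E_k\circ E_{k+1}$ gives the recursion $\lambda_k = \tr(u)\lambda_{k+1}$, and $|\lambda_k|\to 1$ then forces $|\tr(u)|=1$, hence $u$ scalar by Cauchy--Schwarz. Your route instead exploits the tail: every inner automorphism of $R$ is asymptotically trivial in $\|\cdot\|_2$ on elements supported in far tensor slots, whereas $\theta$ moves such elements by the fixed amount $\delta=\|\alpha(a)-a\|_2>0$. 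This is more elementary -- it never uses that $\alpha$ is inner on $M_n$, nor the factoriality of $M_n^{\otimes [1,k]}$ beyond the commutation of disjoint slots -- and it proves slightly more, namely that $\Ad V$ stays at $\|\cdot\|_2$-distance at least $\delta$ from $\theta$ on tail elements for \emph{every} unitary $V$.

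One identity in your write-up is false as stated: $W a_k W^* = a_k$. Since $W=E_N(V)$ commutes with $a_k$ you in fact get $W a_k W^* = a_k WW^*$, and $W$ is a contraction but not in general a unitary, so $WW^*\neq\unit$. The repair is one line: either estimate $\|a_k WW^* - a_k\|_2 \leq \|a\|\,\|WW^*-VV^*\|_2\leq 2\varepsilon$ and accept a final bound of $4\varepsilon$, or, more cleanly, avoid conjugating by $W$ altogether and write $\|Va_kV^*-a_k\|_2 = \|Va_k-a_kV\|_2 = \|(V-W)a_k - a_k(V-W)\|_2\leq 2\varepsilon$, using only that $W$ commutes with $a_k$. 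With either fix the contradiction $\delta\leq 4\varepsilon$ (resp.\ $2\varepsilon$) for all $\varepsilon>0$ goes through, and the rest of your argument is sound.
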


\begin{proof}
We can express $\alpha$ as $\Ad u$ for some unitary $u\in M_n$.
Suppose that $\theta$ is inner. Then $\beta=\Ad v$ for some unitary $v\in R$.
Let $k\geq 1$. Then $\theta$ restricts to $\Ad u^{\otimes [1,k]}$
on $M_n^{\otimes [1,k]}$. Denote by $E_k$ the trace-preserving conditional expectation
of $R$ onto $M_n^{\otimes [1,k]}$.
For every $a\in M_n^{\otimes [1,k]}$,
from $(u^{\otimes [1,k]})a(u^{\otimes [1,k]})^*v=\theta(a)v=va$ we obtain
$(u^{\otimes [1,k]})a(u^{\otimes [1,k]})^*E_k(v)=E_k(v)a$. Then
$(u^{\otimes [1,k]})^*E_k(v)$ is in the centre of
$M_n^{\otimes [1,k]}$ and hence $E_k(v)=\lambda_k(u^{\otimes [1,k]})$ for some $\lambda_k\in \Cb$.
As $k\to \infty$, $|\lambda_k|=\| \lambda_k(u^{\otimes [1,k]})\|_2= \| E_k(v)\|_2\to \| v\|_2=1$.
We also have
$E_k(v)=E_k(E_{k+1}(v))=E_k(\lambda_{k+1}(u^{\otimes [1,k+1]}))=
\tr(u)\lambda_{k+1}(u^{\otimes [1,k]})$,
where $\tr$ denotes the normalized trace on $M_n$. Thus $\lambda_k=\tr(u)\lambda_{k+1}$.
It follows that $|\tr(u)|=1$. By the Cauchy-Schwarz inequality we get $u\in \Cb$. This implies
that $\alpha=\Ad u$ is trivial, contradicting our assumption. Therefore $\beta$ is not inner.
\end{proof}

The equivalence of the first two conditions in the following lemma is due to
Connes, who proved it for arbitrary automorphisms
of a countably decomposable von Neumann algebra \cite[Thm.\ 1.2.1]{OCC}.

\begin{lemma}\label{L-properly outer}
Let $M$ be a von Neumann algebra with faithful normal tracial state $\tau$.
Let $\theta\in \Aut(M,\tau )$. Let $0<\lambda\leq 1/3$ and $0<\varepsilon<1$.
Let $S$ be a $\tau$-norm dense subset of the set of all nonzero projections in $M$.
Then the following are equivalent:
\begin{enumerate}
\item $\theta$ is properly outer,

\item for every nonzero projection $p\in M$ there is a nonzero projection
$q\in M$ with $q\leq p$ and $\| q\theta(q)\| <\varepsilon$,

\item for every nonzero projection $p\in M$ there is a nonzero projection
$q\in M$ with $q\leq p$ and $\| q\theta(q)\|_2 <\varepsilon \| q\|_2$,

\item for every $p\in S$ there is a projection
$q\in M$ with $q\leq p$, $\| q\theta(q)\|_2 <\varepsilon \| q\|_2$,
and $\tau(q)\geq \lambda \tau(p)$,

\item for every nonzero projection $p\in M$ there is an
$x\in pMp$ with $\| x-\theta(x)\|_2> (1-\varepsilon)\| x\|_2$,

\item for every nonzero $\theta$-invariant projection $p\in M$ there is an
$x\in pMp$ with $\| x-\theta(x)\|_2> (1-\varepsilon) \| x\|_2$.
\end{enumerate}
\end{lemma}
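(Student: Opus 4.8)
The plan is to establish the six equivalences via a cycle of implications, exploiting the fact that the easy directions are those that strengthen the conclusion or pass to a larger class of projections. I would organize the argument as $(1)\Rightarrow(2)\Rightarrow(3)\Rightarrow(4)\Rightarrow(5)\Rightarrow(6)\Rightarrow(1)$, or some convenient variant, and I expect several of the arrows to be essentially formal. The implication $(2)\Rightarrow(3)$ is immediate since $\| q\theta(q)\|_2 \leq \| q\theta(q)\| \cdot \| q\|_2 \leq \| q\theta(q)\|\cdot\| q\|_2$ (using $\tau(q)\leq 1$ and that $q$ is a projection), and $(3)\Rightarrow(4)$ needs only the observation that a nonzero projection $q\leq p$ can be chosen with trace bounded below by a fixed fraction of $\tau(p)$, which is where the constant $\lambda\leq 1/3$ enters. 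The passage from the $\| \cdot\|_2$-estimate on $q\theta(q)$ to the existence of an $x\in pMp$ displaced by $\theta$ (statements $(5)$ and $(6)$) should follow by taking $x=q$ itself and relating $\| q-\theta(q)\|_2$ to $\| q\theta(q)\|_2$ via the identity $\| q-\theta(q)\|_2^2 = \| q\|_2^2 + \| \theta(q)\|_2^2 - 2\,\re\,\tau(q\theta(q))$ together with trace-preservation $\| \theta(q)\|_2=\| q\|_2$.

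The heart of the matter is the equivalence of proper outerness $(1)$ with the quantitative spectral-gap conditions, and here I would invoke the Connes result cited in the lemma, namely Theorem~1.2.1 of \cite{OCC}, which already gives $(1)\Leftrightarrow(2)$ for arbitrary automorphisms of a countably decomposable von Neumann algebra. Thus the genuinely new content is (a) the $\| \cdot\|_2$-reformulations, which are needed because the ambient topology on $\Act(G,M,\tau )$ is the $\tau$-norm topology, and (b) the ability to quantify over a dense set $S$ of projections with a uniform trace lower bound, which is what makes the eventual $G_\delta$ argument work. For the reverse implication $(6)\Rightarrow(1)$ I would argue contrapositively: if $\theta$ is not properly outer, there is a nonzero $\theta$-invariant central projection $p$ with $\theta|_{pMp}=\Ad u$ for a unitary $u\in pMp$; then for every $x\in pMp$ one computes $x-\theta(x)=x-uxu^*$, and choosing $x$ to lie in the relative commutant of $u$ forces $\| x-\theta(x)\|_2$ small, violating $(6)$.

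The main obstacle I anticipate is the implication from the weak quantitative statement $(4)$, which only asserts something for projections in the dense set $S$ and with a lower trace bound, back up to the full strength of $(3)$ or $(2)$ for \emph{every} projection. The difficulty is a maximality/exhaustion argument: given an arbitrary nonzero projection $p$, one must approximate $p$ by elements of $S$, extract the guaranteed subprojections, and patch them together—typically via a Zorn's lemma exhaustion producing a maximal family of pairwise orthogonal subprojections each satisfying the displacement estimate—while controlling that the $\tau$-norm perturbation incurred by replacing $p$ with a nearby $S$-element does not destroy the inequality $\| q\theta(q)\|_2<\varepsilon\| q\|_2$. The uniform trace bound $\tau(q)\geq\lambda\tau(p)$ in $(4)$ is precisely what guarantees this exhaustion terminates, and keeping track of the interplay between $\lambda$, $\varepsilon$, and the norm-$2$ estimates across the approximation is the delicate bookkeeping step; I would expect to lose constants here but to recover the clean form of $(2)$ since the latter is stated with an arbitrary $\varepsilon$.
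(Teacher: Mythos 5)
Your overall skeleton (invoke Connes for $(1)\Rightarrow(2)$, run a cycle through the $\|\cdot\|_2$ reformulations, close with $(6)\Rightarrow(1)$ by contraposition) matches the paper's, and your $(2)\Rightarrow(3)$ and $(3)\Rightarrow(5)$ computations are fine. But there are two genuine gaps.

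First, your $(6)\Rightarrow(1)$ has the quantifiers backwards. Condition (6) asserts the \emph{existence} of an $x\in pMp$ with large displacement; to contradict it you must produce a nonzero $\theta$-invariant projection $p$ on which \emph{every} $x\in pMp$ satisfies $\|x-\theta(x)\|_2\leq(1-\varepsilon)\|x\|_2$. Exhibiting one $x$ in the relative commutant of $u$ with small displacement proves nothing. The correct move (as in the paper) is to choose $p$ to be a spectral projection of $u$ with $\|up-tp\|<(1-\varepsilon)/2$ for some unimodular scalar $t$; then $\Ad u$ moves every element of $pMp$ by at most $(1-\varepsilon)$ times its $\|\cdot\|_2$-norm, which is what kills (6).

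Second, you treat the trace lower bound $\tau(q)\geq\lambda\tau(p)$ in $(3)\Rightarrow(4)$ as ``only the observation that $q$ can be chosen with trace bounded below by a fixed fraction of $\tau(p)$.'' That is the hardest step of the lemma and is exactly where $\lambda\leq 1/3$ comes from: condition (3) only hands you \emph{some} nonzero $q\leq p$, possibly of arbitrarily small trace. The paper takes a maximal family $\{q_j\}$ of subprojections of $p$ that are mutually orthogonal, satisfy $q_j\perp\theta(q_k)$ for $j\neq k$, and each obey the displacement estimate; sums them to $q$ (the orthogonality makes the estimate survive summation); and then uses comparison theory to show $p\precsim\theta^{-1}(q)\vee q\vee\theta(q)$ (otherwise (3) would produce a projection enlarging the family), whence $\tau(p)\leq 3\tau(q)$. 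You do describe a maximal-family exhaustion, but you attach it to a direction ($(4)$ back to $(3)$ or $(2)$) that your cycle does not need, while the place where it \emph{is} needed is left unproved. Relatedly, in $(4)\Rightarrow(5)$ you cannot ``take $x=q$,'' since (4) only gives $q\leq p'$ for some $p'\in S$ near $p$, and $q$ need not lie in $pMp$; the paper takes $x=pq'p$ and uses the trace lower bound not to terminate an exhaustion but to guarantee $\|x\|_2$ is bounded below by roughly $\lambda^{1/2}\|p\|_2$, so that the $O(\delta)$ perturbation from $q'$ to $x$ is negligible (at the cost of relaxing $\varepsilon$ to any $\varepsilon'>\varepsilon$, which is harmless since the cycle closes through (1)).
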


\begin{proof}
(1)$\Rightarrow$(2). This is part of \cite[Thm.\ 1.2.1]{OCC}.

(2)$\Rightarrow$(3). With $q$ as in (2) we have
\begin{align*}
\| q\theta(q)\|^2_2 &= \| \theta(q)q\|^2_2 =\tau(qq\theta(q)qq)\le
\tau(q \| q\theta(q)q\| q) \\
&= \| q\theta(q)q\| \tau(q)=\| q\theta(q)\|^2 \| q\|^2_2< \varepsilon^2 \| q\|^2_2.
\end{align*}

(3)$\Rightarrow$(5). This follows by observing that $q$ as in (3) satisfies
\[ \| q-\theta(q)\|_2 \geq \| q-q\theta(q)\|_2
\ge \| q\|_2-\| q\theta(q)\|_2 >(1-\varepsilon)\| q\|_2. \]

(5)$\Rightarrow$(6). Trivial.

(6)$\Rightarrow$(1). Suppose that $\theta$ is not properly outer. Then
there exists a nonzero $\theta$-invariant projection $e\in M$ such that $\theta|_{eMe}$ is
equal to $\Ad u$ for some unitary $u\in eMe$. Let $p$ be a nonzero spectral
projection of $u$ such that $\| up-tp\|<(1-\varepsilon)/2$ for some
$t\in \Cb$ with $|t|=1$. Then $p$ is $\theta$-invariant, and
for every $x\in pMp$ we have
\begin{align*}
\| x-\theta(x)\|_2 &= \| (tp)x(\bar{t}p)-upxpu^*\|_2 \\
&\leq \| (tp-up)x(\bar{t}p)\|_2+\| upx(\bar{t}p-pu^*)\|_2 \\
&\leq \| tp-up\| \| x\|_2 \| \bar{t}p\|+\| up\| \| x\|_2 \| \bar{t}p-pu^*\| \\
&\leq (1-\varepsilon) \| x\|_2,
\end{align*}
contradicting (6).

(3)$\Rightarrow$(4). Let $p\in S$. Let $\{q_j\}_{j\in J}$ be a maximal family of subprojections
of $p$ such that $q_j\perp q_k$ and $q_j\perp \theta(q_k)$ for all distinct $j, k\in J$
and $\| q_j\theta(q_j)\|_2<\varepsilon \| q_j\|_2$ for each $j\in J$.
Set $q=\sum_{j\in J}q_j$.
Then
\begin{align*}
\| q\theta(q)\|^2_2
&= \tau(q\theta(q)q)=\sum_j \tau(q_j\theta(q_j)q_j)
=\sum_{j\in J}\| q_j\theta(q_j)\|^2_2 \\
&< \varepsilon^2 \sum_{j\in J}\| q_j\|^2_2=\varepsilon^2 \| q\|^2_2,
\end{align*}
and hence $\| q\theta(q)\|_2< \varepsilon \| q\|_2$.
We claim that $p \precsim \theta^{-1}(q)\vee q\vee \theta(q)$. Suppose that this is not true.
Since the range projection $p'$ of $p(\theta^{-1}(q)\vee q\vee \theta(q))$ is equivalent
to the orthogonal complement of its kernel projection, which is a subprojection
of $\theta^{-1}(q)\vee q\vee \theta(q)$, we see that $p'\neq p$. By (3) we can find
a projection $e\in M$ with $e\leq p-p'$ and $\| e\theta(e)\|_2< \varepsilon \| e\|_2$.
Then $e(\theta^{-1}(q)\vee q\vee \theta(q))=ep(\theta^{-1}(q)\vee q\vee \theta(q))=0$
and hence we can add $e$ to the family $\{p_j\}_{j\in J}$ to get a larger family,
which is a contradiction. Thus $p \precsim \theta^{-1}(q)\vee q\vee \theta(q)$ as claimed.
Note that for any projections $e_1, e_2\in M$
we have $e_1\vee e_2=e_1+e_3$ for some $e_3\thicksim e_2-e_1\wedge e_2$
and hence $\tau(e_1\vee e_2)\leq \tau(e_1)+\tau(e_2)$. Thus
\[ \tau(p)\leq \tau(\theta^{-1}(q)\vee q\vee \theta(q))
\leq \tau(\theta^{-1}(q))+\tau(q)+\tau(\theta(q))=3\tau(q). \]

(4)$\Rightarrow$(5). With $\varepsilon$ as in (4), let us show that (5) holds
for any given $\varepsilon'$ satisfying $1>\varepsilon'>\varepsilon$.
Take a $p'\in S$ with $\| p-p'\|_2<\delta$ for $\delta$ to be determined later.
By (4) we can find a projection $q'\in M$ with $q'\leq p'$ and $\| q'\theta(q')\|_2 <\varepsilon \| q'\|_2$
and $\tau(q')\geq \lambda \tau(p')$. As in the proof of (3)$\Rightarrow$(5) we have
$\| q'-\theta(q')\|_2 >(1-\varepsilon)\| q'\|_2$.
Set $x=pq'p\in pMp$. Then
\[ \| x-q'\|_2=\| pq'p-p'q'p'\|_2 \le 2\| p-p'\|_2 <2\delta \]
and hence
\[ \| x\|_2 >\| q'\|_2-2\delta \ge \lambda^{1/2}\| p'\|_2-2\delta>
\lambda^{1/2}(\| p\|-\delta)-2\delta\ge \frac{4}{\varepsilon'-\varepsilon}\delta , \]
granted $\delta$ is chosen small enough. Therefore
\begin{align*}
\| x-\theta(x)\|_2
&\geq \| q'-\theta(q')\|_2-2\| x-q'\|_2 >(1-\varepsilon)\| q'\|_2 -4\delta \\
&\geq (1-\varepsilon)\| x\|_2 -4\delta
\geq (1-\varepsilon')\| x\|_2.
\end{align*}
\end{proof}

\begin{lemma}\label{L-free Gdelta}
Let $M$ be a von Neumann algebra with separable predual and faithful normal tracial state $\tau$.
Then for a countable $G$ the set of free actions in $\Act (G,M,\tau )$ is a $G_\delta$.
\end{lemma}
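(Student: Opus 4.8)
The plan is to reduce freeness to the proper outerness of the individual automorphisms $\alpha_s$ and then feed in the characterization established in Lemma~\ref{L-properly outer}. By definition $\alpha\in\Act (G,M,\tau )$ is free precisely when $\alpha_s$ is properly outer for every $s\in G\setminus \{ e \}$, so if we write $F_s = \{ \alpha\in\Act (G,M,\tau ) : \alpha_s \text{ is properly outer} \}$ then the set of free actions is $\bigcap_{s\neq e} F_s$. As $G$ is countable this is a countable intersection, and a countable intersection of $G_\delta$ sets is again $G_\delta$, so it suffices to prove that each $F_s$ is a $G_\delta$.

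Fix $s\in G\setminus \{ e \}$ and fix once and for all the constants $\lambda = 1/3$ and $\varepsilon = 1/2$, which are admissible in Lemma~\ref{L-properly outer}. Since $M$ has separable predual, $L^2 (M,\tau )$ is separable, so the set of nonzero projections of $M$ is separable for the metric induced by $\| \cdot \|_2$; choose a countable $\tau$-norm dense subset $S$ of it. By the equivalence of conditions (1) and (4) in Lemma~\ref{L-properly outer}, applied with $\theta = \alpha_s$ and this $S$, the automorphism $\alpha_s$ is properly outer if and only if for every $p\in S$ there is a projection $q\in M$ with $q\leq p$, $\tau (q)\geq \lambda\tau (p)$, and $\| q\alpha_s (q) \|_2 < \varepsilon\| q \|_2$. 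Hence
\[ F_s = \bigcap_{p\in S} U_p , \qquad U_p = \big\{ \alpha : \exists\, q\leq p \text{ a projection with } \tau (q)\geq \lambda\tau (p) \text{ and } \| q\alpha_s (q) \|_2 < \varepsilon\| q \|_2 \big\} . \]
Because $S$ is countable, this exhibits $F_s$ as a countable intersection, and everything reduces to showing that each $U_p$ is open.

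For openness I would write $U_p = \bigcup_q \{ \alpha : \| q\alpha_s (q) \|_2 < \varepsilon\| q \|_2 \}$, the union taken over all projections $q\leq p$ with $\tau (q)\geq \lambda\tau (p)$; note that the constraints $q\leq p$ and $\tau (q)\geq \lambda\tau (p)$ do not involve $\alpha$. For each such fixed $q$ the map $\alpha\mapsto\alpha_s (q)$ is $\| \cdot \|_2$-continuous by the very definition of the topology on $\Act (G,M,\tau )$ (the singleton $\{ s \}$ is a compact subset of $G$ and $\{ q \}$ is finite, so $Y_{\alpha , \{ s \} , \{ q \} , \delta}$ is a basic neighbourhood), and left multiplication by $q$ is a $\| \cdot \|_2$-contraction; thus $\alpha\mapsto\| q\alpha_s (q) \|_2$ is continuous and each set in the union is open. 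An arbitrary union of open sets is open, so $U_p$ is open and $F_s$ is a $G_\delta$, completing the argument. The main conceptual point — and the reason the proof is short — lies in the choice of characterization: conditions (2), (5), and (6) of Lemma~\ref{L-properly outer} carry a universal quantifier over all (uncountably many) nonzero projections $p$ and would not visibly produce a $G_\delta$, whereas condition (4) confines the universal quantifier to the countable set $S$ and packages the remaining content as an existential over $q$ that dissolves harmlessly into a union of open sets.
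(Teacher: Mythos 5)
Your proof is correct and takes essentially the same route as the paper: both arguments rest on condition (4) of Lemma~\ref{L-properly outer} to confine the quantifier over projections to a countable $\tau$-norm dense set, and then express freeness as a countable intersection of sets that are open because the existential over $q$ becomes a union of sets of the form $\{\alpha : \| q\alpha_s (q)\|_2 < \varepsilon \| q\|_2\}$. The only difference is organizational (the paper bundles the pairs $(s,p)$ into increasing finite sets $K_n\times\Omega_n$ rather than intersecting over each pair separately), which is immaterial.
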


\begin{proof}
Take an increasing sequence $K_1 \subseteq K_2 \subseteq\dots$ of finite subsets of $G$ whose union
is $G\setminus \{ e \}$, and an increasing sequence $\Omega_1 \subseteq\Omega_2 \subseteq\dots$
of finite sets of nonzero projections in $M$ whose union is $\tau$-norm dense in the set of all nonzero
projections in $M$. For every $n\in\Nb$ write $W_n$ for the open set of all
$\alpha\in\Act (G,M,\tau )$ such that for every $s\in K_n$ and $p\in\Omega_n$ there is a projection
$q\leq p$ satisfying $\| q\alpha_s (q) \|_2 < \| q \|_2 /3$ and $\| q \|_2 \geq \| p \|_2 /2$.
Then $W := \bigcap_{n=1}^\infty W_n$ is a $G_\delta$, and it consists precisely of the free actions
by Lemma~\ref{L-properly outer}.
\end{proof}

\subsection{Nonclassifiability by countable structures}
The proof of the theorems in this subsection will require Gaussian and Bogoliubov actions
(cf.\ the proof of Theorem~13.7 in \cite{Kechris}),
and so we will begin by briefly recalling these constructions.

Gaussian Hilbert spaces provide a standard means for producing actions from representations
(see Appendix~E of \cite{Kechris} or Appendix~A.7 of \cite{BHV}).
Let $\cH$ be a separable infinite-dimensional real Hilbert space.
Write $\nu$ for the standard Gaussian measure $(2\pi )^{-1/2} e^{-x^2 /2} \, dx$ on $\Rb$.
Fix an isometric isomorphism $\varphi : \cH \to \cH^{:1:}$ where $\cH^{:1:}$ is
the closed subspace of $L_2 (\Rb^\Nb , \nu^\Nb )$ spanned by the coordinate projections, i.e.,
the first Wiener chaos.
Then associated to each orthogonal operator $S$ on $\cH$ is a unique automorphism in $\Aut (\Rb^\Nb ,\nu^\Nb )$
whose Koopman operator restricts on $\cH^{:1:}$ to $\varphi\circ S \circ\varphi^{-1}$.
Thus from every representation in $\Rep (G,\cH )$ we obtain a $\nu^\Nb$-preserving action of $G$
on $(\Rb^\Nb , \nu^\Nb )$, called a {\it Gaussian action}, and up to conjugacy this action depends
only on the orthogonal equivalence class of the representation. The orthogonal Koopman representation for 
the Gaussian action associated to a $\pi\in\Rep (G,\cH )$ is orthogonally equivalent to the direct sum
$\bigoplus_{n=0}^\infty S^n \pi$ of all symmetric tensor powers of $\pi$. 

By the Gaussian action associated to a unitary 
representation of $G$ on a separable infinite-dimensional complex Hilbert space $\cH$ we 
mean the Gaussian action obtained from the induced orthogonal representation of $G$ on the realification
of $\cH$. From the description of the orthogonal Koopman representation in the latter case
we see that if $\pi$ is weakly mixing then so is
$\beta$, and if $\pi$ is weakly contained in $\lambda_G$ then so is $\kappa_{\beta ,0}$. These two facts
will be used in the proof of Theorem~\ref{T-discrete nonclassifiable}.

Bogoliubov actions are constructed as follows (see \cite{BraRob2} for a general reference).
Let $\cH$ be a separable infinite-dimensional Hilbert space.
The CAR algebra $A(\cH )$ is defined as the unique, up to $^*$-isomorphism, unital
$C^*$-algebra generated by operators $a(\xi )$ for $\xi\in\cH$ such that
the map $\xi\mapsto a(\xi )^*$ is linear and the anticommutation relations
\begin{align*}
a(\xi ) a(\zeta )^* + a(\zeta )^* a(\xi ) &= \langle \zeta ,\xi \rangle
\unit_{A(\cH )} ,\\
a(\xi ) a(\zeta ) + a(\zeta ) a(\xi ) &= 0,
\end{align*}
hold for all $\xi ,\zeta \in\cH$.
The $C^*$-algebra $A(\cH )$ is $^*$-isomorphic to the type $2^\infty$ UHF algebra and has
a unique tracial state \cite{BraRob2}.
The weak operator closure (equivalently, strong operator
closure) of $A(\cH )$ in the tracial representation is
isomorphic to the hyperfinite II$_1$ factor \cite[Prop.\ III.3.4.6]{OA} and we will
write it as $W(\cH )$.
Corresponding to a unitary operator $U$ on $\cH$ is
the Bogoliubov automorphism of $A(\cH )$ determined by
$a(\xi ) \mapsto a(U\xi )$ for $\xi\in\cH$.
A representation
$\pi\in\Rep (G,\cH )$ gives rise via Bogoliubov automorphisms to a continuous action of $G$ on
$A(\cH )$. Every continuous action of
$G$ on the CAR algebra by $^*$-automorphisms is trace-preserving and hence extends to a
continuous action on $W(\cH )$. We refer to these actions as {\em Bogoliubov actions}.
The tracial state $\tau$ on $A(\cH )$ is determined by
\[ \tau (a^* (\xi_1 )\cdots a^* (\xi_n ) a(\zeta_m ) \cdots a(\zeta_1 )) =
\delta_{nm} 2^{-n} \det ( \langle \xi_i , \zeta_j \rangle )_{i,j} .\]

The tracial GNS representation $(\pi_\tau , \cH_\tau , \Omega_\tau )$ and the corresponding
unitary implementation on $\cH_\tau$ of a unitary representation $\sigma$ of $G$ on $\cH$
can be described as follows (see Section~2 of \cite{Evans} for more details and references).
The antisymmetric Fock space $F(\cH )$ is defined as  $\bigoplus_{n=0}^\infty \Lambda^n \cH$ where
$\Lambda^n \cH$ is the $n$th antisymmetric tensor power of $\cH$, which for $n=0$ is defined as $\Cb$
with unit vector $\Omega$.
We have an irreducible representation $\rho$ of the CAR algebra on $F(\cH )$ determined by
\[ \rho (a(\xi )^* )\zeta_1 \wedge\cdots\wedge\zeta_n = \xi\wedge\zeta_1 \wedge\cdots\wedge\zeta_n . \]
For a unitary operator $U$ on $\cH$ we write $F(U)$ for the unitary operator on $F(\cH )$
which acts on $\Lambda^n \cH$ as $U^{\otimes n}$ and as the identity on $\Omega$.
Writing $\overline{\cH}$ for the conjugate Hilbert space of $\cH$ and
$\xi\mapsto\bar{\xi}$ for the canonical antilinear isometry from $\cH$ to $\overline{\cH}$,
we define a representation $\pi$ of $A(\cH )$ on $F(\cH )\otimes_2 F(\overline{\cH} )$ by
\[ \pi (a(\xi )) = \frac{1}{\sqrt{2}} \big[ \rho (a(\xi ))\otimes F(-\unit ) +
\unit\otimes\rho (a(\bar{\xi} )^* )\big] . \]
It can then be checked that the vector state on $A(\cH )$ associated to $\Omega\otimes\Omega$
coincides with $\tau$, so that $\pi_\tau$ can be identified with a subrepresentation of $\pi$. Writing $\beta$
for the Bogoliubov action associated to $\sigma$ and $F(\sigma )$ for the
unitary representation $s \mapsto F(\sigma_s )$ of $G$ on $F(\cH )$,
we observe accordingly that the representation $\kappa_{\beta ,0}$ of $G$ on $\cH_\tau \ominus\Cb\Omega_\tau$
arising from $\beta$ can be viewed as a subrepresentation of
\[ (F(\sigma )_0 \otimes\unit_G ) \oplus (F(\sigma )_0 \otimes F(\bar{\sigma} )_0 ) \oplus
(\unit_G \otimes F(\bar{\sigma} )_0 ) \]
where $F(\pi )_0$ for a representation $\pi$ means the restriction of $F(\pi )$ to the orthogonal complement
of $\Cb\Omega$. Since a tensor product of two representations is weakly contained in a tensor product
of two other representations under the assumption of factorwise weak containment, we see that
if $\sigma$ is weakly contained in $\lambda_G$ then so are $F(\sigma )_0$
and $F(\bar{\sigma} )_0$ and hence so is $\kappa_{\beta ,0}$. Since weak mixing for representations
is preserved under taking tensor products, we also observe that if $\sigma$ is weakly mixing
then $\beta$ is weakly mixing (as can alternatively be seen using the formula for $\tau$ from above).
Note also from the formula for $\tau$ that $\pi$ embeds as a subrepresentation of $\kappa_\beta$
via the map $\xi\mapsto \pi_\tau (\sqrt{2} a(\xi )^* )\Omega_\tau$ from $\cH$ to $\cH_\tau$.
We will invoke these facts in the proof of Theorem~\ref{T-discrete nonclassifiable}.

In the following results $M$ is assumed to be either $L_\infty (X,\tau )$ for some standard atomless probability
space $(X,\tau )$ or the hyperfinite II$_1$ factor $R$ with its unique faithful normal tracial state $\tau$.
We write $\Act_\lambda (G,M,\tau )$ for the closed set of all $\alpha\in\Act (G,M,\tau )$ such that the associated
representation $\kappa_{\alpha ,0}$ is weakly contained in the left regular representation.
We also define
\[ \WM_\lambda (G,M,\tau ) = \WM (G,M,\tau ) \cap \Act_\lambda (G,M,\tau ) , \]
which is a $G_\delta$ in $\Act (G,M,\tau )$,
and write $\FWM_\lambda (G,M,\tau )$ for the set of free weakly mixing actions in
$\Act_\lambda (G,M,\tau )$, which for countable $G$ is a $G_\delta$
by Lemma~\ref{L-free Gdelta}.

To establish the following theorem we argue by contradiction following the
scheme of the proof of Theorem~13.7 in \cite{Kechris}.

\begin{theorem}\label{T-discrete nonclassifiable}
Suppose that $G$ is countably infinite.
Then up to conjugacy the elements of $\FWM_\lambda (G,M,\tau )$
do not admit classification by countable structures.
\end{theorem}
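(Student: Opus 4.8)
The plan is to transfer the nonclassifiability of unitary representations from Theorem~\ref{T-reg nonclassifiable}, in the weakly mixing form of Remark~\ref{R-WM}, to the space of actions by means of the Gaussian construction when $M=L_\infty (X,\tau )$ and the Bogoliubov construction when $M=R$, arguing by contradiction exactly as in the scheme of Theorem~13.7 of \cite{Kechris}. Write $\Phi : \WM_\lambda (G,\cH )\to\Act (G,M,\tau )$ for the map sending a representation $\pi$ to the associated Gaussian, respectively Bogoliubov, action. First I would record that $\Phi$ is continuous and equivariant for unitary conjugation, so that unitarily equivalent representations are carried to conjugate actions, and that by the facts assembled before the statement $\Phi$ sends weakly mixing representations weakly contained in $\lambda_G$ to weakly mixing actions $\beta$ with $\kappa_{\beta ,0}\prec\lambda_G$. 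The one remaining point needed for $\Phi$ to land in $\FWM_\lambda (G,M,\tau )$ is freeness; since weak mixing of $\pi$ controls only the $G$-action and not the individual automorphisms $\beta_s$, I would secure freeness by replacing $\Phi (\pi )$ with its product against a fixed free weakly mixing action in $\FWM_\lambda (G,M,\tau )$ (for instance a Bernoulli-type shift as in Proposition~\ref{P-not inner}), which preserves weak mixing, $\lambda$-containment, and the recovery property below.

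Next I would run the turbulence dichotomy. Suppose toward a contradiction that conjugacy on $\FWM_\lambda (G,M,\tau )$ admitted classification by countable structures, witnessed by a Borel reduction $\theta$ to the orbit equivalence relation of a continuous $S_\infty$-action. Then $\theta\circ\Phi$ is a Baire measurable homomorphism from the unitary conjugacy relation on $\WM_\lambda (G,\cH )$ to an $S_\infty$-orbit relation. By Theorem~\ref{T-reg turbulent} together with Proposition~\ref{P-WM} the action of $\cU (\cH )$ on $\WM_\lambda (G,\cH )$ is generically turbulent, so by Theorem~3.21 and Corollary~3.19 of \cite{COER} there is a comeager set $C\subseteq\WM_\lambda (G,\cH )$ on which $\theta\circ\Phi$ is constant up to the $S_\infty$-relation; since $\theta$ is a reduction, all of the actions $\Phi (\pi )$ for $\pi\in C$ are then mutually conjugate, say to a fixed $\beta_0$.

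The contradiction will come from recovering $\pi$ inside the action. Conjugate actions have unitarily equivalent Koopman representations on $L^2 (M,\tau )\ominus\Cb\unit$, so $\kappa_{\Phi (\pi ),0}\cong\kappa_{\beta_0 ,0}=:\kappa_0$ for every $\pi\in C$; and by the embedding $\xi\mapsto\pi_\tau (\sqrt{2}a(\xi )^* )\Omega_\tau$ in the Bogoliubov case, respectively the first-chaos summand $S^1\pi$ in the Gaussian case, each $\pi$ is a subrepresentation of $\kappa_{\Phi (\pi ),0}$, hence of the single separable representation $\kappa_0$. Thus a comeager set of representations would embed as subrepresentations of one fixed representation. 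To finish I would show that this is impossible, namely that for any fixed $\kappa_0$ the set $\{ \pi\in\WM_\lambda (G,\cH ):\pi\subseteq\kappa_0 \}$ is meager. This set is $\cU (\cH )$-invariant and analytic, hence meager or comeager by the topological zero--one law for the generically turbulent action, so it would suffice to exhibit a dense $G_\delta$ of representations that do not embed into $\kappa_0$, using that the generic element of $\WM_\lambda (G,\cH )$ is weakly equivalent to $\lambda_G$ while realizing matrix coefficients outside the fixed family supplied by vectors of $\kappa_0$.

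The main obstacle I expect is precisely this last meagerness statement. The difficulty is that $\Phi$ is genuinely not injective up to unitary equivalence: a conjugacy of actions need not preserve the one-particle (first-chaos) subspace, so one cannot recover $\pi$ on the nose and must instead argue that honest containment in a fixed separable representation is a meager condition, in sharp contrast to weak containment, whose generic class (that of $\lambda_G$) is comeager by Lemma~\ref{L-wc}. Establishing that the honest subrepresentations of a fixed $\kappa_0$ form a meager set in the turbulent space is the technical heart of the argument; the rest is the by-now-standard turbulence transfer.
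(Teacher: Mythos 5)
Your overall strategy is exactly the paper's: transfer through the Gaussian/Bogoliubov construction, tensor with a fixed free weakly mixing Bernoulli-type action to secure freeness (via Kallman's result that a tensor product with a free action is free), invoke generic turbulence of $\cU (\cH )$ on $\WM_\lambda (G,\cH )$ together with Theorem~3.18 of \cite{COER} to get a comeager set $C$ of representations whose images are all conjugate to a single action $\beta_0$, and then derive a contradiction from the fact that each $\pi\in C$ embeds into $\kappa_{\beta_0}$. Up to that point the proposal matches the paper step for step.

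The gap is the final meagerness claim, which you correctly identify as the technical heart but do not actually prove. Saying that the set $\{\pi : \pi\subseteq\kappa_0\}$ is invariant and analytic, hence meager or comeager by the zero--one law, only reduces the problem to exhibiting a dense $G_\delta$ of representations not embeddable in $\kappa_0$; the phrase ``realizing matrix coefficients outside the fixed family supplied by vectors of $\kappa_0$'' is not an argument, and naively one might worry that containment, like weak containment, could be a generic condition. The paper closes this gap with Lemma~\ref{L-not dense}: since $G$ is countably infinite, $\hat{G}_\lambda$ has no isolated points (Lemma~\ref{L-no irred subrep}, via Rieffel's result that $\lambda_G$ has no irreducible subrepresentations), and therefore the set of representations \emph{disjoint} from the fixed separable representation $\kappa_{\beta_0}$ is a dense $G_\delta$ in $\Rep_\lambda (G,\cH )$. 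Disjointness is strictly stronger than non-containment, so this dense $G_\delta$ must meet the comeager set $C$, producing a $\pi$ that is simultaneously contained in and disjoint from $\kappa_{\beta_0}$ --- the desired contradiction. The proof of Lemma~\ref{L-not dense} in turn rests on Voiculescu's theorem, the second countability of $\hat{G}_\lambda$, and Wang's lemma guaranteeing that every nonempty open subset of a spectrum without isolated points is uncountable; none of these ingredients appears in your sketch, so as written the argument is incomplete at precisely the point you flagged.
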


\begin{proof}
Suppose that there does exist a Borel function $F$ from $\FWM_\lambda (G,M,\tau )$ to the space $\Theta_L$
of countable structures on some countable language $L$ such that $\alpha$ is conjugate to
$\beta$ if and only if $F(\alpha )\cong F(\beta )$. By Theorem~\ref{T-reg turbulent} and
Proposition~\ref{P-WM} there is a dense $G_\delta$ set
$Z$ of weakly mixing representations in $\Rep_\lambda (G,\cH )$ on which the action of $\cU (\cH )$ is turbulent.
Fix an action $\sigma$ in $\Act_\lambda (G,M,\tau )$ which is free and
weakly mixing, such as a Bernoulli shift (see Subsection~\ref{SS-freeness}). Then for every
$\gamma\in\WM (G,M,\tau )$ the action $\sigma\otimes\gamma$ is weakly mixing and, by Corollary~1.12 of
\cite{Kallman}, free.

Recall from the discussion above that if
$\pi$ is a weakly mixing representation in $\Rep_\lambda (G,\cH )$ then
the associated Gaussian action (if $M=L_\infty (X,\tau )$) or the associated Bogoliubov action
(if $M=R$) is contained in $\WM_\lambda (G,M,\tau )$.
By Theorem~3.18 of \cite{COER} there exists a $K\in \Theta_L$ such that $F(\sigma \otimes\beta_\pi ) \cong K$
for all $\pi$ in a comeager subset $Z_0$ of $Z$, where $\beta_\pi$ is the associated Gaussian or
Bogoliubov action and $\sigma \otimes\beta_\pi$ is viewed as an action on $M$ via some fixed isomorphism.
Thus there
is an $\alpha\in\WM_\lambda (G,M,\tau )$ such that $\alpha$ is conjugate to $\sigma\otimes\beta_\pi$ for
all $\pi\in Z_0$. But then $\kappa_\alpha$ contains every representation in $Z_0$,
which is a contradiction because the set of representations in $\Rep_\lambda (G,\cH )$ which are disjoint from
$\kappa_\alpha$ is a dense $G_\delta$ by Lemmas~\ref{L-not dense} and \ref{L-no irred subrep}
and hence has nonempty intersection with $Z_0$.
\end{proof}

The techniques of Hjorth using irreducible
representations \cite{NSIDGR}\cite[Thm.\ 13.7]{Kechris} can be applied to give
nonclassifiability-by-countable-structures results for actions of any second countable locally compact group
which is not compact and not type I. This excludes however many groups of interest,
such as $\Rb^d$ and $\SL (2,\Rb )$. The following theorem applies in particular to all
noncompact separable Lie groups which are not amenable \cite[Thm.\ 2.5]{Baggett} as well as many, if not
all, which are amenable.
To put the hypotheses into perspective, we remark as in Section~\ref{S-groups} that
Fell constructed an example of a noncompact second countable locally compact group
with countable dual (see Section~IV of \cite{Baggett}).

For a closed set $C\subseteq \hat{G}$, we write $\Rep_C (G,\cH )$ for the closed set of
representations in $\Rep (G,\cH )$ which are weakly contained in $C$, and
$\WM_C (G,\cH )$ for $\WM (G,\cH ) \cap \Rep_C (G,\cH )$.

\begin{theorem}\label{T-nondiscrete}
Let $G$ be a second countable locally compact group such that either (i) $G$ is
not amenable and $\hat{G}_\lambda$ is uncountable, or (ii) $G$ is amenable and
the set of isolated points in $\hat{G}$ is not dense.
Then up to conjugacy the elements of $\WM_\lambda (G,M,\tau )$ do not admit classification
by countable structures.
\end{theorem}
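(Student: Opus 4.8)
The plan is to follow the scheme of Theorem~\ref{T-discrete nonclassifiable}, now without the freeness considerations, since the claim concerns $\WM_\lambda(G,M,\tau)$ rather than the free weakly mixing actions. The first task is to produce, in each case, a Polish space of weakly mixing representations weakly contained in $\lambda_G$ on which $\cU(\cH)$ acts turbulently. In case (ii), amenability gives $\Rep_\lambda(G,\cH) = \Rep(C^*(G),\cH)$, the hypothesis is that $\widehat{C^*(G)}_\iso$ is not dense, and so by Theorem~\ref{T-turbulent} and Proposition~\ref{P-WM} there is a dense $G_\delta$ set $Z$ of faithful essential weakly mixing representations on which the action is turbulent. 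In case (i), $\hat{G}_\lambda$ is uncountable, so by the Cantor--Bendixson argument of Theorem~\ref{T-not classifiable} there is a closed ideal $I_0 \subseteq C^*_\lambda(G)$ for which $B := C^*_\lambda(G)/I_0$ is nontrivial and $\hat{B}$, being the perfect set of condensation points of $\hat{G}_\lambda$, has no isolated points. Viewing $\Rep(B,\cH)$ as a closed invariant subset of $\Rep_\lambda(G,\cH)$, Theorem~\ref{T-turbulent} shows that the action on $\FE(B,\cH)$ is turbulent, and nonamenability forces every such representation to be weakly mixing (as in the proof of Proposition~\ref{P-WM}); here I set $Z = \FE(B,\cH)$.

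Assume toward a contradiction that there is a Borel map $F$ from $\WM_\lambda(G,M,\tau)$ to the space $\Theta_L$ of countable $L$-structures with $\alpha$ conjugate to $\beta$ if and only if $F(\alpha) \cong F(\beta)$. To each $\pi \in Z$ I associate its Gaussian action (if $M = L_\infty(X,\tau)$) or its Bogoliubov action (if $M = R$), denoted $\beta_\pi$ and viewed as an element of $\Act(G,M,\tau)$ via a fixed isomorphism. By the facts recalled in the discussion of Gaussian and Bogoliubov actions, $\pi$ weakly mixing and weakly contained in $\lambda_G$ forces $\beta_\pi \in \WM_\lambda(G,M,\tau)$, the assignment $\pi \mapsto \beta_\pi$ is Borel, and unitarily equivalent representations give conjugate actions. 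Hence $\pi \mapsto F(\beta_\pi)$ is a Baire measurable map on $Z$ that is constant up to $\cong$ on $\cU(\cH)$-orbits, and by Theorem~3.18 of \cite{COER} there is a comeager set $Z_0 \subseteq Z$ and a fixed $K \in \Theta_L$ with $F(\beta_\pi) \cong K$ for all $\pi \in Z_0$. Fixing any $\pi_* \in Z_0$ and putting $\alpha = \beta_{\pi_*}$, the defining property of $F$ shows that $\beta_\pi$ is conjugate to $\alpha$ for every $\pi \in Z_0$, so that $\kappa_\alpha \cong \kappa_{\beta_\pi}$. As recorded above, $\pi$ embeds as a subrepresentation of $\kappa_{\beta_\pi}$ (through the first Wiener chaos in the Gaussian case and through $\xi \mapsto \pi_\tau(\sqrt{2}\,a(\xi)^*)\Omega_\tau$ in the Bogoliubov case), whence $\pi$ is, up to unitary equivalence, a subrepresentation of $\kappa_\alpha$ for every $\pi \in Z_0$.

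To derive the contradiction I apply Lemma~\ref{L-not dense}, using that $\kappa_{\alpha,0}$ is weakly contained in $\lambda_G$ because $\alpha \in \Act_\lambda(G,M,\tau)$. In case (i), let $\kappa'$ be the subrepresentation of $\kappa_{\alpha,0}$ on which $I_0$ acts trivially, regarded as a representation of $B$; since $\hat{B}$ has no isolated points, Lemma~\ref{L-not dense} (with the relevant ideal equal to all of $B$) shows that the representations in $\Rep(B,\cH)$ disjoint from $\kappa'$ form a dense $G_\delta$. This set meets the comeager set $Z_0$, giving a $\pi \in Z_0$ disjoint from $\kappa'$; but each $\pi \in Z_0$ is weakly mixing and factors through $B$, so $\pi \subseteq \kappa_\alpha$ forces $\pi \subseteq \kappa'$, contradicting disjointness. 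In case (ii), let $I$ be the closed ideal of $C^*(G)$ with $\hat{I}$ equal to the complement of the closure of $\widehat{C^*(G)}_\iso$, which is nonzero by the density hypothesis; Lemma~\ref{L-not dense} shows that the representations disjoint from $\kappa_{\alpha,0}|_{\cH_I}$ form a dense $G_\delta$, which again meets $Z_0$. For such a $\pi$, faithfulness on $C^*(G)$ gives $\pi(I) \neq 0$, so the subrepresentation of $\pi$ supported on $\hat{I}$ is nonzero and, since $\pi \subseteq \kappa_\alpha$, is realized inside $\kappa_{\alpha,0}|_{\cH_I}$; this is a common subrepresentation, contradicting disjointness. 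Either way the assumed classification cannot exist.

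The main obstacle I anticipate is the disjointness step of the last paragraph, where one must match the part of the Koopman representation $\kappa_{\alpha,0}$ relevant to Lemma~\ref{L-not dense} with the part that can actually contain the representations $\pi \in Z_0$. Concretely, in case (i) this requires checking that a weakly mixing $\pi$ factoring through $B$ and contained in $\kappa_\alpha$ must land in the $B$-factoring part $\kappa'$ of $\kappa_{\alpha,0}$ (so that the invariant subspace carrying $\pi$ is annihilated by $I_0$ and hence orthogonal to $\overline{\kappa_{\alpha,0}(I_0)\cH}$), while in case (ii) it requires that faithfulness of $\pi$ produce a genuinely nonzero common subrepresentation with $\kappa_{\alpha,0}|_{\cH_I}$. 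The remaining ingredients—the turbulence input from Theorem~\ref{T-turbulent}, the measurability and equivariance of the Gaussian and Bogoliubov constructions, and the embedding $\pi \hookrightarrow \kappa_{\beta_\pi}$—are exactly as in the proof of Theorem~\ref{T-discrete nonclassifiable}.
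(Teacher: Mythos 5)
Your proposal is correct and follows essentially the same route as the paper's proof: in case (ii) it is identical, and in case (i) your quotient $B=C^*_\lambda(G)/I_0$ with $Z=\FE(B,\cH)$ is just the paper's $\WM_C(G,\cH)=\Rep_C(G,\cH)$ (for $C$ the set of condensation points of $\hat{G}_\lambda$) in different notation, with the same turbulence input, the same appeal to Theorem~3.18 of \cite{COER}, and the same contradiction via Lemma~\ref{L-not dense}. The additional care you take in the final disjointness step --- cutting $\kappa_{\alpha,0}$ down to the part factoring through $B$, respectively to $\cH_I$, before invoking Lemma~\ref{L-not dense}, and checking that a $\pi\in Z_0$ contained in $\kappa_\alpha$ actually lands in that part --- fills in a detail the paper leaves implicit but does not constitute a different argument.
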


\begin{proof}
Let $\cH$ be a separable infinite-dimensional Hilbert space.
Suppose first that $G$ is not amenable and $\hat{G}_\lambda$ is uncountable.
Let $C$ be the closed set of condensation points of $\hat{G}_\lambda$, which is nonempty by the uncountability of
the latter. Since $G$ is not amenable $\hat{G}_\lambda$ contains no finite-dimensional representations, and so
$\WM_C (G,\cH ) = \Rep_C (G,\cH )$.
By Theorem~\ref{T-turbulent} there is a dense $G_\delta$ set
$Z\subseteq\WM_C (G,\cH )$ on which the action of $\cU (\cH )$ is turbulent.
Suppose that there exists a Borel function $F$ from $\WM_\lambda (G,M,\tau )$
to the space $\Theta_L$ of countable structures on some countable language $L$ such that $\alpha$ is conjugate to
$\beta$ if and only if $F(\alpha )\cong F(\beta )$. Theorem~3.18 of \cite{COER} yields the existence of a $K\in \Theta_L$
such that $F(\beta_\pi ) \cong K$ for all
$\pi$ in a comeager subset $Z_0$ of $Z$, where $\beta_\pi$ is the associated Gaussian action
if $M=L_\infty (X,\tau )$ or the associated Bogoliubov action on
$W(\cH )$ if $M=R$ (which, as in the proof of Theorem~\ref{T-discrete nonclassifiable}, are both
weakly mixing since $\pi$ is weakly mixing)
and $\beta_\pi$ is viewed as an action on $M$ via some fixed isomorphism.
Consequently there is an $\alpha\in\WM_\lambda (G,M,\tau )$ such that $\alpha$ is conjugate to $\beta_\pi$ for
all $\pi\in Z_0$. Then $\kappa_\alpha$ contains every representation in $Z_0$,
which is a contradiction because the set of representations in $\WM_C (G,\cH )$ which are disjoint from
$\kappa_\alpha$ is a dense $G_\delta$ by Lemma~\ref{L-not dense}
and hence has nonempty intersection with $Z_0$. Thus $\WM_C (G,M,\tau )$, and hence also
$\WM_\lambda (G,M,\tau )$, does not admit classification by countable structures.

Suppose now that $G$ is amenable and the set of isolated points in $\hat{G}$ is not dense. Then
$\WM (G,\cH ) = \WM_\lambda (G,\cH )$ and this set is a dense $G_\delta$ in $\Rep (G,\cH )$
\cite[Thm.\ 2.5]{BerRos}. Then by Theorem~\ref{T-turbulent} there is a dense $G_\delta$ set
$Z\subseteq\WM (G,\cH )$ on which the action of $\cU (\cH )$ is turbulent. Using this $Z$ we can now carry out
an argument by contradiction as in the first paragraph to obtain the desired conclusion.
\end{proof}

\begin{remark}
Conjugacy can be replaced by unitary equivalence in the statement of each of the
theorems in this subsection, as is clear from the proofs.
\end{remark}


\section{Turbulence and actions on the hyperfinite II$_1$ factor}\label{S-turb}

Our main goal in this section is to establish turbulence in the space of free actions of
a countably infinite amenable group on the hyperfinite II$_1$ factor $R$.
In Subsection~\ref{SS-ed} we show the meagerness of orbits in the space of actions of a countably infinite
amenable group on $R$ by developing a noncommutative version of an entropy and disjointness
argument of Foreman and Weiss \cite{FW}.
In Subsection~\ref{SS-turb} we show for general second countable locally compact $G$ how to construct
a $G$-action on $R$ which has dense orbit and is a turbulent point, and then deduce
turbulence in the space of free actions when $G$ is countably infinite and amenable by applying a theorem
of Ocneanu.

As usual, $G$ is assumed to be a second countable locally compact group,
subject to extra hypotheses as required.
In this section $\tau$ will invariably denote the unique normal tracial state on $R$.
We write $\Aut (R)$ for the automorphism group of $R$ and
$\Act (G,R)$ for the set of continuous actions of $G$ on $R$.
We regard these as Polish spaces under the topology defined in Section~\ref{S-nonclassifiability},
where we have dropped $\tau$ in the notation since every action on $R$ is $\tau$-preserving.
The set of free actions in $\Act (G,R)$ will be written $\Fr (G,R)$. We will require
the following two lemmas relating to freeness.

\begin{lemma}\label{L-free dense}
Suppose that $G$ is countable. Then $\Fr (G,R )$ is a dense $G_\delta$ subset of $\Act (G,R )$.
\end{lemma}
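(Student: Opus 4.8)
The plan is to treat the two assertions separately. The $G_\delta$ claim is immediate: since $R$ has separable predual and its unique tracial state $\tau$ is faithful and normal, applying Lemma~\ref{L-free Gdelta} with $M=R$ shows at once that $\Fr (G,R)$ is a $G_\delta$ subset of $\Act (G,R)$. So the only genuine work lies in the density statement.

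For density I would first record that $\Fr (G,R)$ is invariant under the conjugation action of $\Aut (R)$. Indeed, freeness is defined through proper outerness of each $\alpha_s$, and for $\gamma\in\Aut (R)$ the automorphism $\gamma\alpha_s\gamma^{-1}$ is properly outer exactly when $\alpha_s$ is, since $\gamma$ carries invariant projections to invariant projections and inner automorphisms of a corner to inner automorphisms of the image corner. Consequently, if a single free action has dense orbit, then $\Fr (G,R)$ contains that dense orbit and is therefore dense. The entire density claim thus reduces to exhibiting one free action $\alpha_0\in\Act (G,R)$ whose $\Aut (R)$-orbit is dense. A natural candidate is a Bernoulli shift on $R\cong\overline{M_n^{\otimes G}}$, which is free by the discussion in Subsection~\ref{SS-freeness} (via Proposition~\ref{P-not inner}); one then shows its orbit is dense by a Glimm/Voiculescu-type approximation. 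Given any target $\alpha$ and basic neighbourhood $Y_{\alpha ,K,\Omega ,\varepsilon}$ (with $K\subseteq G$ finite, as $G$ is countable), one uses hyperfiniteness to approximate $\Omega\cup\bigcup_{s\in K}\alpha_s(\Omega )$ within $\varepsilon$ in $\|\cdot\|_2$ by a finite-dimensional subalgebra, and then constructs a $\tau$-preserving automorphism of $R$ conjugating $\alpha_0$ to an action matching $\alpha$ on this finite data. This is precisely the dense-orbit (turbulent) action produced in Subsection~\ref{SS-turb}, which can be taken to be free, and I would simply invoke that construction once available.

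The main obstacle is exactly this dense-orbit step. It is tempting to seek a purely local argument by tensoring $\alpha$ with a free Bernoulli action $\sigma$: the action $\alpha\,\bar\otimes\,\sigma$ on $R\,\bar\otimes\,R\cong R$ is free by Corollary~1.12 of \cite{Kallman}, but transporting it back to $R$ through an isomorphism $R\,\bar\otimes\,R\cong R$ fails to keep it $\|\cdot\|_2$-close to $\alpha$. The reason is that such an isomorphism distorts the values $\alpha_s(a)$, which need not lie near the chosen tensor factor, so no choice of isomorphism reproduces $\alpha$ on the finite data $(K,\Omega )$. Overcoming this requires genuinely building an action in the neighbourhood rather than merely tensoring one — matching the homomorphism structure of $\alpha$ on the finite data while simultaneously forcing proper outerness of every $\alpha_s$ — and it is this approximation, carried out in the dense-orbit construction of Subsection~\ref{SS-turb}, that constitutes the technical heart of the argument.
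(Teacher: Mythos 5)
Your overall strategy coincides with the paper's: the $G_\delta$ part is indeed immediate from Lemma~\ref{L-free Gdelta}, and density is reduced to exhibiting one free action with dense orbit, with freeness supplied by a free tensor factor via \cite[Cor.\ 1.12]{Kallman} and the dense orbit supplied by a tensor product $\beta\otimes\bigotimes_{i=1}^\infty\alpha_i$ over a dense sequence $\{\alpha_i\}$ in $\Act(G,R)$. (The paper cites the argument of Lemma~3.5 of \cite{KP} for the dense orbit; deferring instead to the construction of Lemma~\ref{L-turbulent point} is legitimate and not circular, since that lemma does not depend on the present one.)

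Two points need repair, however. First, you assert that the Bernoulli shift $\alpha_0$ itself has dense orbit, to be proved by conjugating $\alpha_0$ so as to match an arbitrary target $\alpha$ on finite data. That step fails: for general countable $G$ a Bernoulli shift does not have dense orbit, and if every free action automatically had dense orbit then Lemma~\ref{L-free dense orbit} would need neither amenability nor Ocneanu's theorem. The density really comes from building the dense sequence $\{\alpha_i\}$ into the tensor product, so that for any target and finite data some factor $\alpha_i$ already does the approximating and one only has to move a finite-dimensional subfactor into position by an automorphism of $R$; the free factor $\beta$ is present solely to force freeness of the whole product. You do eventually point to the Subsection~\ref{SS-turb} action, but identifying it with the Bernoulli shift obscures where the density actually originates, and your closing paragraph's diagnosis that ``tensoring fails'' is precisely what the dense-sequence device circumvents. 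Second, the lemma allows $G$ finite, and for finite $G$ the weak closure of $M_n^{\otimes G}$ is a matrix algebra rather than $R$, so the Bernoulli shift does not furnish the required free action; the paper instead embeds $G$ into the unitary group of $M_n$ and takes $s\mapsto(\Ad\varphi(s))^{\otimes\Nb}$, which is free by Proposition~\ref{P-not inner}.
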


\begin{proof}
By Lemma~\ref{L-free Gdelta} we need only show the density.
Find a free action $\beta$ in $\Act (G,R )$.
If $G$ is infinite we may take a Bernoulli shift (see the first part of Subsection~\ref{SS-freeness}),
while if $G$ is finite then we may take an embedding $\varphi$ of $G$ into the unitary group
of $M_n$ for some $n$ and then use $s\mapsto (\Ad \varphi(s))^{\otimes \Nb}$ by Proposition~\ref{P-not inner}.
Take a dense sequence $\{ \alpha_i \}_{i=1}^\infty$ in
$\Act (G,R)$ and let $\alpha$ be an element of $\Act (G,R)$ which is conjugate to
$\beta\otimes\bigotimes_{i=1}^\infty \alpha_i \in
\Act (G,R\overline{\otimes} R^{\overline{\otimes}\Nb} )$.
Then the orbit of $\alpha$ is dense in $\Act (G,R)$, as can be established using an
argument as in the proof of Lemma~3.5 in \cite{KP}. Moreover, $\alpha$ is free since the tensor
product of a free action with any other action is free \cite[Cor.\ 1.12]{Kallman}, completing the proof.
\end{proof}

\begin{lemma}\label{L-free dense orbit}
Suppose that $G$ is countable and amenable.
Then every free action in $\Act (G,R )$ has dense orbit.
\end{lemma}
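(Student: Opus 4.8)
The plan is to deduce density of every free orbit from the single fact, established in Lemma~\ref{L-free dense}, that the free actions form a dense subset of $\Act(G,R)$. Since $\overline{\orb(\alpha)}$ is closed and $\Aut(R)$-invariant, it will be all of $\Act(G,R)$ as soon as it contains every free action. So fix free actions $\alpha$ and $\gamma$; I will show $\gamma \in \overline{\orb(\alpha)}$. Here the governing input is Ocneanu's theorem \cite[Thm.\ 1.4]{Ocneanu}: because $G$ is countable and amenable, $\gamma$ and $\alpha$ are cocycle conjugate, so there are an automorphism $\theta \in \Aut(R)$ and an $\alpha$-cocycle $\{ u_s \}_{s \in G}$ of unitaries in $R$ (satisfying $u_{st} = u_s \alpha_s(u_t)$) with $\theta \circ \gamma_s \circ \theta^{-1} = u_s \alpha_s(\cdot) u_s^{*}$ for all $s$. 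Writing $\alpha^u$ for this cocycle perturbation of $\alpha$, we have $\alpha^u = \theta \cdot \gamma \in \orb(\gamma)$, hence $\overline{\orb(\gamma)} = \overline{\orb(\alpha^u)}$; thus it suffices to prove that $\alpha^u \in \overline{\orb(\alpha)}$, for then $\gamma \in \overline{\orb(\gamma)} = \overline{\orb(\alpha^u)} \subseteq \overline{\orb(\alpha)}$.

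To place $\alpha^u$ in $\overline{\orb(\alpha)}$ I would compare it with inner conjugates of $\alpha$. A direct computation gives, for every unitary $v \in R$, the identity $(\Ad v \cdot \alpha)_s = \Ad(v\alpha_s(v^{*})) \circ \alpha_s$, so that conjugating $\alpha$ by the inner automorphism $\Ad v$ is exactly the perturbation of $\alpha$ by the coboundary $s \mapsto v\alpha_s(v^{*})$. Moreover, for unitaries $u, w \in R$ and $a \in R$ one estimates $\| u\alpha_s(a)u^{*} - w\alpha_s(a)w^{*} \|_2 \leq 2 \|a\| \, \| u - w \|_2$. Consequently, given a finite $K \subseteq G$, a finite $\Omega \subseteq R$, and $\varepsilon > 0$, the inner conjugate $\Ad v \cdot \alpha \in \orb(\alpha)$ lies in the basic neighbourhood $Y_{\alpha^u, K, \Omega, \varepsilon}$ of $\alpha^u$ as soon as $\| u_s - v\alpha_s(v^{*}) \|_2$ is sufficiently small for all $s \in K$. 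Thus $\alpha^u \in \overline{\orb(\alpha)}$ reduces to the assertion that the cocycle $\{ u_s \}$ can, on each finite subset of $G$ and to any tolerance, be approximated in $\|\cdot\|_2$ by a coboundary $\{ v\alpha_s(v^{*}) \}$.

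This approximate vanishing of the first cohomology of the free action $\alpha$ is the crux of the proof and the step I expect to be the main obstacle. It is precisely here that both the amenability of $G$ and the quantitative control in Ocneanu's theorem enter: one averages the cocycle over a sufficiently invariant F\o lner set and uses the cocycle bounds to solve the coboundary equation $u_s \approx v\alpha_s(v^{*})$ approximately over the prescribed finite set $K$, producing the required unitary $v$. Granting this cohomological input, the reductions above close up and yield $\gamma \in \overline{\orb(\alpha)}$; as $\gamma$ was an arbitrary free action and the free actions are dense, we conclude that $\overline{\orb(\alpha)} = \Act(G,R)$, i.e., every free action has dense orbit.
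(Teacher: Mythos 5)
Your overall architecture matches the paper's: reduce the statement to ``every free orbit is dense in the set of free actions'' and then invoke the density of $\Fr (G,R)$ in $\Act (G,R)$ from Lemma~\ref{L-free dense}, with Ocneanu's theorem as the governing input. The reductions in your first two paragraphs are correct, including the identity $(\Ad v\cdot\alpha )_s = \Ad (v\alpha_s (v^*))\circ\alpha_s$ and the estimate $\| u\alpha_s (a)u^* - w\alpha_s (a)w^* \|_2 \leq 2\| a\| \, \| u-w\|_2$. The problem is where you land: you have reduced everything to the assertion that the cocycle $\{ u_s \}$ can be approximated in $\| \cdot \|_2$, uniformly over a finite set, by coboundaries $\{ v\alpha_s (v^*)\}$, and you do not prove this. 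The suggested remedy --- averaging the cocycle over a F{\o}lner set --- does not work as stated: an average of unitaries is not a unitary, and the approximate vanishing of the first (unitary-valued) cohomology of a free action of an amenable group on $R$ is not an elementary averaging fact but one of the principal technical theorems of Ocneanu's monograph, of essentially the same depth as the result you are trying to prove. As written, the ``crux'' you identify is a genuine gap.

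The paper sidesteps this entirely by using the quantitative form of Ocneanu's theorem (Section~1.4 of \cite{Ocneanu}), which you cite but do not exploit: given free actions $\alpha ,\beta$, a finite set $K\subseteq G$, and $\varepsilon >0$, one can choose the conjugating automorphism $\theta$ and the cocycle so that $\| u_s - 1\|_2 < \varepsilon$ for all $s\in K$. With that bound in hand there is nothing left to approximate --- take $v=\unit$ in your own estimate and conclude directly that $\theta\cdot\beta$ lies in the prescribed basic neighbourhood of $\alpha$, so that the orbit of $\beta$ is dense in $\Fr (G,R)$ and hence, by Lemma~\ref{L-free dense}, in $\Act (G,R)$. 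So the fix is not to supply a new cohomological argument but to quote the version of Ocneanu's theorem that already carries the bound on the cocycle; your second and third paragraphs then become unnecessary.
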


\begin{proof}
A theorem of Ocneanu shows that if $G$ is a countable amenable group then
given free actions $\alpha ,\beta\in\Act (G,R)$, a finite set $K\subseteq G$,
and an $\varepsilon > 0$ there exist a $\theta\in\Aut (R)$ and unitaries $u_s \in R$
for $s\in K$ such that, for every $s\in K$, $\| u_s - 1 \|_2 < \varepsilon$
and $\alpha_s = \theta\circ (\Ad u_s )\circ\beta\circ\theta^{-1}$
(see Section~1.4 of \cite{Ocneanu}). Consequently the orbit of
every free action is dense in $\Fr (G,R)$, and so by
Lemma~\ref{L-free dense} we obtain the result.
\end{proof}

\subsection{Entropy, disjointness, and meagerness of orbits}\label{SS-ed}

We will use the entropy of Connes-Narnhofer-Thirring \cite{CNT} as applied to actions of
discrete amenable groups on $R$. For a general reference on CNT entropy see \cite{NS}.
A {\it channel} is a u.c.p.\ (unital completely positive) map $\gamma : B\to R$ where $B$ is a
finite-dimensional $C^*$-algebra. Given channels $\gamma_i : B \to R$ for $i=1,\dots ,n$ we write
$H_\tau (\gamma_1 , \dots , \gamma_n )$ for the supremum of the entropies of the Abelian models
for $\gamma_1 , \dots , \gamma_n$ (see Section~III of \cite{CNT}). By Theorem~IV.3 of \cite{CNT}
the function $H_\tau (\cdot )$ is continuous with respect to the trace norm in the sense that, given
a finite-dimensional $C^*$-algebra $B$, for every $\varepsilon > 0$ there is a $\delta > 0$ such that,
for all $n\in\Nb$, if $\gamma_i , \gamma_i' : B\to R$ are channels with
$\sup_{\| a \| \leq 1} \| \gamma_i' (a) - \gamma_i (a) \|_2 < \delta$ for $i=1, \dots ,n$ then
\[ | H_\tau (\gamma_1' , \dots , \gamma_n' ) - H_\tau (\gamma_1 , \dots , \gamma_n ) | < n\varepsilon . \]

Suppose now that $G$ is discrete and amenable and let $\alpha\in\Act (G,R)$.
We define $h_\tau (\gamma , \alpha )$ as the limit of
\[ \frac{1}{|F|} H_\tau ((\alpha_s \circ\gamma )_{s\in F} ) \]
as $F$ becomes more and more invariant. This limit exists by subadditivity
\cite[Theorem 6.1]{LW}\cite[Proposition 3.22]{Ind}.
The CNT entropy $h_\tau (\alpha )$ of $\alpha$ is then defined as the supremum of $h_\tau (\gamma , \alpha )$
over all channels $\gamma : B\to R$.

\begin{lemma}\label{L-zero entropy}
Suppose that $G$ is countably infinite and amenable.
Then the set of $\alpha\in\Act (G,R)$ with $h_\tau (\alpha ) = 0$ is a dense $G_\delta$.
\end{lemma}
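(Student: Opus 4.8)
The statement has two parts: the set of zero-entropy actions is (i) a $G_\delta$ and (ii) dense. For the $G_\delta$ part, the plan is to express the zero-entropy condition as a countable intersection of open sets, using the continuity of $H_\tau$ with respect to the trace norm recorded in the excerpt. For the density part, the plan is to produce an explicit dense family of zero-entropy actions and invoke a suitable approximation/conjugation argument, drawing on the same machinery (Bernoulli shifts and tensor products) used in Lemma~\ref{L-free dense}.

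\textbf{The $G_\delta$ part.} Since $h_\tau(\alpha) = \sup_\gamma h_\tau(\gamma,\alpha)$ and each $h_\tau(\gamma,\alpha)$ is a limit of Følner averages of the quantities $\tfrac{1}{|F|}H_\tau((\alpha_s\circ\gamma)_{s\in F})$, the condition $h_\tau(\alpha)=0$ is equivalent to the vanishing of every $h_\tau(\gamma,\alpha)$. The first thing I would do is reduce the supremum over all channels to a \emph{countable} supremum: fix a countable dense sequence of channels $\gamma_j : B_j \to R$ (dense in the trace norm $\sup_{\|a\|\le 1}\|\cdot\|_2$, which is possible since $R$ has separable predual and there are countably many finite-dimensional $B_j$ up to isomorphism), and use the continuity estimate $|H_\tau(\gamma_1',\dots)-H_\tau(\gamma_1,\dots)|<n\varepsilon$ from the excerpt to show $h_\tau(\gamma,\alpha)$ depends on $\gamma$ continuously, so the sup over all channels equals the sup over the $\gamma_j$. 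Then $\{h_\tau(\alpha)=0\}=\bigcap_j \{h_\tau(\gamma_j,\alpha)=0\}$. For each fixed $\gamma_j$, I would write $\{h_\tau(\gamma_j,\alpha)=0\}=\bigcap_{m}\{h_\tau(\gamma_j,\alpha)<1/m\}$ and argue each such set is $G_\delta$: since $h_\tau(\gamma_j,\alpha)$ is an infimum (by subadditivity the Følner limit equals the infimum of $\tfrac{1}{|F|}H_\tau((\alpha_s\circ\gamma_j)_{s\in F})$ over finite $F$), and each map $\alpha\mapsto \tfrac{1}{|F|}H_\tau((\alpha_s\circ\gamma_j)_{s\in F})$ is continuous in $\alpha$ by the same trace-norm continuity of $H_\tau$ (the maps $\alpha\mapsto\alpha_s\circ\gamma_j$ are continuous into channels), the condition ``$h_\tau(\gamma_j,\alpha)<1/m$'' is ``\emph{some} $F$ gives average $<1/m$'', an open condition. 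Hence the whole set is a countable intersection of open sets.

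\textbf{The density part.} Here the plan follows Lemma~\ref{L-free dense} almost verbatim. I would first note that zero-entropy actions exist and are plentiful: a finite tensor product construction, or a single action, won't obviously be dense, so the trick is tensoring a fixed zero-entropy action with a dense sequence. Specifically, take a dense sequence $\{\alpha_i\}$ in $\Act(G,R)$ and a fixed zero-entropy action $\beta_0$ (for instance one may use that CNT entropy vanishes for suitable actions; an isometric or compact-type action, or the action built from a zero-entropy dynamical system, has $h_\tau=0$). More efficiently, I would seek a single action $\alpha$ conjugate to $\beta_0 \otimes \bigotimes_i \alpha_i'$ whose orbit is dense by the argument cited from Lemma~3.5 of \cite{KP}, while arranging that the \emph{whole} tensor product still has zero entropy. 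The main obstacle is exactly this: CNT entropy is \emph{not} zero for a tensor product unless the factors are controlled, since $h_\tau(\alpha\otimes\beta)\ge h_\tau(\alpha)$ in general and a Bernoulli factor carries positive entropy. So the clean route is the opposite of Lemma~\ref{L-free dense}: rather than tensoring, I would approximate. The key point is that one can perturb any action to one with zero entropy while staying trace-norm close — concretely, I expect the authors to use that the zero-entropy actions contain a dense family obtained by a cut-down/finite-dimensional approximation (actions factoring through a "local" structure on which the Følner averages are forced small), exploiting that entropy is determined by the asymptotic Følner behaviour and can be killed by building in enough near-periodicity. Establishing this density — producing genuinely zero-entropy actions arbitrarily close to an arbitrary given action — is the hard part; the $G_\delta$ assertion is a routine consequence of the continuity and subadditivity already in hand.
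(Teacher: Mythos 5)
Your $G_\delta$ argument is essentially the paper's: the paper also expresses the zero-entropy set as a countable intersection, using the inclusions $\gamma_n : N_n \hookrightarrow R$ of a nested sequence of finite-dimensional subfactors with dense union in place of your dense sequence of channels, and using the existence of the F{\o}lner limit (``the average is small for some $k\geq m$'', intersected over $m$) in place of your ``limit equals infimum over all finite $F$'' claim. Both rest on the same continuity and subadditivity facts and either formulation works.

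The density part, however, is a genuine gap: you correctly diagnose why the tensor-product trick of Lemma~\ref{L-free dense} fails (entropy only goes up under tensoring with a dense sequence containing positive-entropy actions), but you then leave the density unproved, offering only a speculative ``perturb any action to a zero-entropy one'' route that is not carried out and would in fact be the hardest possible way to proceed. The paper avoids having to exhibit zero-entropy actions close to an arbitrary action at all. It writes the zero-entropy set as $\bigcap_{n} S_{1/n}$ where $S_\varepsilon$ is the set of actions with entropy at most $\varepsilon$; each $S_\varepsilon$ is a $G_\delta$ by the argument above, so by Baire category it suffices to show each $S_\varepsilon$ is \emph{dense} separately. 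For that one only needs a single \emph{free} action with entropy in $(0,\varepsilon]$ --- supplied by Kawahigashi's realization of all nonzero entropy values by free actions on $R$ --- because by Ocneanu's theorem (Lemma~\ref{L-free dense orbit}) every free action has dense orbit in $\Act(G,R)$, and entropy is a conjugacy invariant, so the entire (dense) orbit lies in $S_\varepsilon$. The lesson is that the meager/comeager calculus lets you trade ``zero entropy is dense'' for ``small entropy is dense for every $\varepsilon$'', which is a much weaker statement with a one-line proof given the freeness machinery already in Section~5. Without this reduction, or some concrete substitute for your ``near-periodicity'' construction, your proof of density is incomplete.
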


\begin{proof}
It suffices to show that, given an $\varepsilon > 0$, the set $S_\varepsilon$ of actions
in $\Act (G,R)$ with entropy at most $\varepsilon$ is a dense $G_\delta$, since
the collection of zero entropy actions is equal to $\bigcap_{n=1}^\infty S_{1/n}$.
Take a F{\o}lner sequence $\{ F_k \}_{k=1}^\infty$ in $G$ and finite-dimensional subfactors
$N_1 \subseteq N_2 \subseteq\dots$ of $R$ with trace norm dense union. Let
$\gamma_n : N_n \hookrightarrow R$ be the inclusion map for each $n\in\Nb$. Then
for all $n,m,l\in\Nb$ the set $S_\varepsilon (n,m,l)$ of all $\alpha\in\Act (G,R)$ such that
\[ H_\tau ((\alpha_s \circ\gamma_n )_{s\in F} ) < \Big( \varepsilon + \frac{1}{l} \Big)|F_k| \]
for some $k\geq m$ is open, and by the continuity properties of entropy $S_\varepsilon$ is equal to
$\bigcap_{l=1}^\infty \bigcap_{n=1}^\infty \bigcap_{m=1}^\infty S_\varepsilon (n,m,l)$, which is a $G_\delta$.
Kawahigashi showed in \cite{Kaw} that there exist free
actions realizing all possible nonzero values of entropy in the sense of Connes and St{\o}rmer, to which
CNT entropy specializes in this case \cite{NS}. Since the orbit of every free action is
dense in $\Act (G,R)$ by Lemma~\ref{L-free dense orbit}, we conclude that
$S_\varepsilon$ is dense.
\end{proof}

\begin{remark}
For any von Neumann algebra $M$ with separable predual and faithful
normal state $\tau$, the set of $\alpha\in\Act (G,M,\tau )$ with $h_\tau (\alpha ) = 0$ is a $G_\delta$.
To see this one can argue as in the proof of Lemma~\ref{L-zero entropy} by taking the sequence
$\{ \gamma_n \}_{n=1}^\infty$ to consist of a union of countable point-$\tau$-norm dense
sets of channels $B\to M$ over a collection of representatives $B$
of the countably many isomorphism classes of finite-dimensional $C^*$-algebras.
\end{remark}

The notions of joinings and disjointness \cite{ETJ} can be extended to noncommutative
actions as follows. A joining is a certain type of correspondence \cite[App.\ V.B]{NG}\cite{Cor}
and as such we need to consider opposite algebras in order to formulate the definition
as in the commutative case.
For a $C^*$-algebra $A$ we write $A^\op$ for the opposite $C^*$-algebra,
which has the same $^*$-linear structure as $A$ but with the multiplication reversed.
If $A$ is a von Neumann algebra then so is $A^\op$. For an element $b\in A$ we write
$\tilde{b}$ for the corresponding element of $A^\op$.
An action $\beta$ on $A$ gives rise to an action $\beta^\op$ on $A^\op$ defined
by $\beta^\op_s (\tilde{a} ) = \widetilde{\beta_s (a)}$ for all $s\in G$ and $a\in N$.

\begin{definition}\label{D-disjoint}
Let $M$ and $N$ be von Neumann algebras with faithful normal tracial states $\tau$ and
$\sigma$, respectively. Let $G$ a locally compact group. Let $\alpha\in\Act (M,G,\tau )$ and
$\beta\in\Act (N,G,\sigma )$. A {\em joining} of $\alpha$ and $\beta$ is
an $(\alpha\otimes\beta^\op )$-invariant state on the maximal $C^*$-tensor product
$M\otimes_\mx N^\op$ whose marginals are $\tau$ and $\sigma$.
We say that $\alpha$ and $\beta$ are {\em disjoint} if
$\tau\otimes\sigma$ is the only joining of $\alpha$ and $\beta$.
\end{definition}

Note that the definition of joining is symmetric in the sense that an $(\alpha\otimes\beta^\op )$-invariant
state on $M\otimes_\mx N^\op$ corresponds to an $(\beta\otimes\alpha^\op )$-invariant
state on $N\otimes_\mx M^\op$ via the canonical isomorphism of the latter with the opposite
$C^*$-algebra of $M\otimes_\mx N^\op$.

Specializing the picture for general correspondences between von Neumann algebras
\cite[App.\ V.B]{NG}\cite[{\S}1.2]{Cor},
we see that there is a bijective correspondence between the joinings of two actions
$\alpha\in\Act (M,G,\tau )$ and $\beta\in\Act (N,G,\sigma )$
and the $G$-equivariant unital complete positive maps $\varphi : M\to N$ such that $\tau = \sigma\circ\varphi$.
We associate to a joining $\omega$ of $\alpha$ with $\beta$
such a unital completely positive map $\varphi : M\to N$ as follows.
We define the bounded operator $S : L^2 (N,\sigma ) \to L^2 (M\otimes_\mx N^\op ,\omega )$
by setting $Sa = \unit_M \otimes \tilde{a}$ for $a\in N$, viewing all of these elements
as vectors in the appropriate GNS Hilbert space. We also define
the representation $\pi : M \to \cB (L^2 (M\otimes_\mx N^\op ,\omega ))$ by
$\pi (a) \zeta = (a\otimes\unit_{N^\op} )\zeta$, viewing $M\otimes\unit_{N^\op}$
as acting on $L^2 (M\otimes_\mx N^\op ,\omega )$. For $a\in M$
set $\varphi (a) = S^* \pi (a) S$. Then $S^* \pi (a) S$
commutes with right multiplication by elements of $N$ on $L^2 (N,\sigma )$, so that
$\varphi (a) \in (N')' = N$.

In the reverse direction, given a $G$-equivariant unital complete positive map $\varphi : M\to N$
such that $\tau = \sigma\circ\varphi$, we define on $\cH_0 = M\otimes N$ the sesquilinear form
\[ \langle a_1 \otimes b_1 , a_2 \otimes b_2 \rangle_\varphi =
\sigma (\varphi (a_2^* a_1 )b_1 b_2^* ) , \]
take the completion $\cH$ of $\cH_0$ modulo the null space of $\langle\cdot , \cdot\rangle_\varphi$,
and observe that the left and right actions of $M$ and $N$, respectively, pass to commuting
actions on $\cH$. This gives a representation of $M\otimes_\mx N^\op$, with $G$-invariant vector state
$\omega$ arising from the class of $\unit_M \otimes\unit_N$ in $\cH$ and having
$\tau$ and $\sigma$ as marginals. Note in particular that the identity map on $M$ gives
a joining of $\alpha$ with itself (the diagonal joining) which, as long as $M\neq\Cb$, is different
from the product joining. Thus every action in $\Act (G,M,\tau )$ for $M\neq\Cb$ is not disjoint from
any of its conjugates.

It is easily checked that, under the above correspondence,
the image of $\varphi$ is the scalars precisely
when it corresponds to the product state $\tau\otimes\sigma$
by the assumption on the marginals in the definition of joining.
This observation was used in \cite{MInd} to a give a linear-geometric proof of the
disjointness of zero entropy and completely positive entropy actions of discrete amenable groups
on a probability space. We will also apply this perspective in the proof of the following lemma,
only now using CNT entropy instead of $\ell_1$ geometry.

\begin{lemma}\label{L-entropy disjoint}
Suppose that $G$ is countably infinite and amenable. Let $\alpha$ be an action
in $\Act (G,R)$ with $h_\tau (\alpha ) = 0$. Then $\alpha$ is disjoint from every
action in $\Act (G,R)$ which is conjugate to the Bernoulli shift on the weak
operator closure of $M_2^{\otimes G}$.
\end{lemma}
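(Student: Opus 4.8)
The plan is to deduce disjointness from the entropy hypothesis by passing through the joining/completely-positive-map dictionary recorded just above the lemma. Write $N$ for the weak operator closure of $M_2^{\otimes G}$ with its tracial state $\sigma$, let $\beta$ denote the Bernoulli shift on $N$, and let $\alpha$ be our zero entropy action on $R$ with trace $\tau$. Since disjointness is unaffected by replacing a factor with a conjugate action (an isomorphism transports joinings to joinings), it suffices to prove that $\alpha$ is disjoint from $\beta$ itself. Let $\omega$ be an arbitrary joining of $\alpha$ and $\beta$; by the discussion preceding the lemma it corresponds to a $G$-equivariant unital completely positive map $\varphi : R\to N$ with $\sigma\circ\varphi=\tau$, and $\omega=\tau\otimes\sigma$ precisely when $\varphi(R)=\Cb\unit$. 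Thus the whole problem reduces to showing that any such $\varphi$ has image in the scalars.

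The main computation is to transport the vanishing of entropy across $\varphi$. I would fix a channel $\gamma:B\to R$ and consider the channel $\varphi\circ\gamma:B\to N$. Equivariance gives $\beta_s\circ(\varphi\circ\gamma)=\varphi\circ(\alpha_s\circ\gamma)$ for every $s\in G$, so for each finite $F\subseteq G$ we have $H_\sigma((\beta_s\circ\varphi\circ\gamma)_{s\in F})=H_\sigma((\varphi\circ\alpha_s\circ\gamma)_{s\in F})$. Because $\varphi$ is unital completely positive with $\sigma\circ\varphi=\tau$, the monotonicity of the CNT functional under trace-preserving completely positive maps yields $H_\sigma((\varphi\circ\delta_s)_{s\in F})\le H_\tau((\delta_s)_{s\in F})$ for arbitrary channels $\delta_s$; applying this with $\delta_s=\alpha_s\circ\gamma$, dividing by $|F|$, and passing to the F{\o}lner limit gives $h_\sigma(\varphi\circ\gamma,\beta)\le h_\tau(\gamma,\alpha)=0$. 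Hence $\varphi\circ\gamma$ has zero entropy rate under $\beta$ for every channel $\gamma$ into $R$. (The correct direction of the monotonicity is clearest from the decomposition definition of $H$: every decomposition of $\sigma$ pulls back through $\varphi$ to a decomposition of $\tau$ realizing the same value, while not every decomposition of $\tau$ so arises.)

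To finish I would invoke that $\beta$ has completely positive entropy, that is, that any channel $\eta:B\to N$ with $h_\sigma(\eta,\beta)=0$ satisfies $\eta(B)\subseteq\Cb\unit$. Given any self-adjoint $a\in R$ with $0\le a\le\unit$, apply the previous paragraph to the channel $\gamma:\Cb^2\to R$ sending the two minimal projections to $a$ and $\unit-a$; then $h_\sigma(\varphi\circ\gamma,\beta)=0$ forces $\varphi(a)\in\Cb\unit$. As such elements span $R$, this gives $\varphi(R)=\Cb\unit$, whence $\omega=\tau\otimes\sigma$ and $\alpha$ is disjoint from $\beta$. The genuinely dynamical input, and the step I expect to be the main obstacle, is the completely positive entropy of the noncommutative Bernoulli shift over a general countably infinite amenable $G$ — the statement that every nontrivial channel has strictly positive CNT entropy rate — which must be cited or proved separately (ultimately resting on the positivity of entropy detected on the generating coordinate together with a K-type argument, to which CNT entropy specializes as Connes--St{\o}rmer entropy here); by contrast the monotonicity of CNT entropy under trace-preserving completely positive maps and the F{\o}lner-limit bookkeeping are comparatively routine.
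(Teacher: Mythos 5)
Your reduction and bookkeeping match the paper's proof exactly: pass from a joining to a $G$-equivariant trace-preserving u.c.p.\ map $\varphi$ out of the zero-entropy system, use equivariance together with the monotonicity of the CNT functional under state-preserving u.c.p.\ maps (your parenthetical gets the direction right) to conclude that every channel $\varphi\circ\gamma$ has zero entropy rate under the Bernoulli shift $\beta$, and then conclude that $\varphi\circ\gamma$, hence $\varphi$, maps into the scalars. The paper runs the two halves as a single chain of inequalities rather than first establishing $h_\tau(\varphi\circ\gamma,\beta)=0$ and then invoking completely positive entropy, but the content is identical up to that point.

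The one step you defer --- ``any channel $\eta$ with $h_\tau(\eta,\beta)=0$ has scalar range'' --- is, however, where the paper does essentially all of its work, and it is not handled by citation, nor by anything resembling a K-system argument; it is a short localization-plus-independence estimate that you should supply. Given a channel $\eta : B\to R=(M_2,\tr)^{\otimes G}$, Proposition~3.1.11 of \cite{NS} yields a channel $\eta'$ with range in $M_2^{\otimes K}$ for some finite $K\subseteq G$, close enough to $\eta$ that $|H_\tau(\eta')-H_\tau(\eta)|\leq H_\tau(\eta)/4$ and that the trace-norm continuity of $H_\tau(\cdot)$ (Theorem~IV.3 of \cite{CNT}) applies with error $H_\tau(\eta)/4$ per channel. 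For a finite $F\subseteq G$ choose a maximal $F'\subseteq F$ whose elements give pairwise disjoint translates of $K$; then $|F'|\geq |F|/|KK^{-1}|$, the channels $\beta_s\circ\eta'$ for $s\in F'$ land in independent tensor factors so that $H_\tau((\beta_s\circ\eta')_{s\in F'})=|F'|\,H_\tau(\eta')$, and combining with the continuity estimate gives $H_\tau((\beta_s\circ\eta)_{s\in F'})\geq \tfrac12 |F'|\,H_\tau(\eta)$, whence $h_\tau(\eta,\beta)\geq \frac{1}{2|KK^{-1}|}H_\tau(\eta)$. Thus $h_\tau(\eta,\beta)=0$ forces $H_\tau(\eta)=0$, and $\eta$ has scalar range by Lemma~3.1.4 of \cite{NS}. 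With this inserted (taking $\eta=\varphi\circ\gamma$), your argument is complete; without it, the proposal reduces the lemma to an unproved statement that is genuinely the crux.
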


\begin{proof}
We view $R$ as $(M_2 , \tr )^{\otimes G}$ and write $\beta$ for the Bernoulli shift on the
latter. Let $\gamma : B\to R$ be a channel. By Proposition~3.1.11 of \cite{NS}
we can find a channel $\gamma' : B\to R$
such that $|H_\tau (\gamma' ) - H_\tau (\gamma )| \leq  H_\tau (\gamma )/4$. and
$\gamma' (B) \subseteq M_2^{\otimes K}$ for some finite set $K\subseteq G$.
Given a nonempty finite set $F\subseteq G$, take a subset $F' \subseteq F$ which is maximal
with respect to the property that $s\notin KK^{-1} t$ for all distinct $s,t\in F'$.
Then $|F' | \geq |F|/|KK^{-1} |$ and
$H_\tau ((\beta_s \circ\gamma' )_{s\in F'} ) = |F' | H_\tau (\gamma' )$, and hence
\begin{align*}
H_\tau ((\beta_s \circ\gamma )_{s\in F'} )
&\geq H_\tau ((\beta_s \circ\gamma' )_{s\in F'} ) - \frac14 |F' | H_\tau (\gamma ) \\
&= |F' | \bigg( H_\tau (\gamma' ) - \frac14 H_\tau (\gamma )\bigg) \\
&\geq \frac12 |F' | H_\tau (\gamma )
\end{align*}
Now suppose we are given a $\tau$-preserving u.c.p.\ map $\varphi : R\to R$
such that $\varphi\circ\alpha_s = \beta_s \circ\varphi$ for all $s\in G$.
Then applying the above inequality to $\varphi\circ\gamma$ and
using the monotonicity properties of $H_\tau (\cdot )$ we have
\begin{align*}
\frac12 |F' | H_\tau (\varphi\circ\gamma ) &\leq H_\tau ((\beta_s \circ\varphi\circ\gamma )_{s\in F'} ) \\
&= H_\tau ((\varphi\circ\alpha_s \circ\gamma )_{s\in F'} ) \\
&\leq H_\tau ((\alpha_s \circ\gamma )_{s\in F'} )
\end{align*}
and so
\begin{align*}
\frac{1}{|F|} H_\tau ((\alpha_s \circ\gamma )_{s\in F} )
&\geq \frac{1}{|F' ||KK^{-1} |} H_\tau ((\alpha_s \circ\gamma )_{s\in F'} ) \\
&\geq \frac{1}{2|KK^{-1} |} H_\tau (\varphi\circ\gamma ) .
\end{align*}
But $\frac{1}{|F|} H_\tau ((\alpha_s \circ\gamma )_{s\in F} ) \to 0$ as $F$ becomes
more and more invariant since $\alpha$ has zero entropy. Therefore $H_\tau (\varphi\circ\gamma ) = 0$,
and so $\varphi\circ\gamma$ maps into the scalars \cite[Lemma~3.1.4]{NS}.
It follows that $\varphi$ is the map $a\mapsto \tau (a)\unit_R$, yielding the lemma.
\end{proof}

The following generalizes a result of del Junco from the commutative case \cite{dJ}.

\begin{lemma}\label{L-disjoint}
Let $M$ be a von Neumann algebra with separable predual and faithful normal tracial
state $\tau$. Let $\alpha\in\Act (G,M,\tau )$. Then the set $\alpha^\perp$ of
actions in $\Act (G,M,\tau )$ that are disjoint from $\alpha$ is a $G_\delta$.
\end{lemma}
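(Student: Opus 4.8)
We must show that, for a von Neumann algebra $M$ with separable predual and faithful normal tracial state $\tau$, and for a fixed $\alpha\in\Act(G,M,\tau)$, the set $\alpha^\perp$ of actions disjoint from $\alpha$ is a $G_\delta$ in $\Act(G,M,\tau)$. Disjointness means that the only joining of $\alpha$ with $\beta$ is the product state $\tau\otimes\tau$, and by the correspondence recalled before the lemma, joinings of $\alpha$ with $\beta$ are in bijection with $G$-equivariant $\tau$-preserving u.c.p.\ maps $\varphi\colon M\to M$, with the product joining corresponding precisely to the map $a\mapsto\tau(a)\unit_M$. So $\beta\in\alpha^\perp$ iff the \emph{only} such equivariant u.c.p.\ map is the trivial one.

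\begin{proof}
Fix a countable $\| \!\cdot \!\|_2$-dense sequence $\{ a_i \}_{i=1}^\infty$ in the unit ball of $M$ and a countable dense subset $\{ s_j \}_{j=1}^\infty$ of $G$. By the correspondence between joinings and $G$-equivariant $\tau$-preserving u.c.p.\ maps described above, $\beta\in\alpha^\perp$ fails exactly when there exists a $\tau$-preserving u.c.p.\ map $\varphi : M\to M$, intertwining $\alpha$ and $\beta$ in the sense that $\varphi\circ\alpha_s = \beta_s \circ\varphi$ for all $s\in G$, which is not the map $a\mapsto\tau (a)\unit_M$. The plan is to realize the set $\Act (G,M,\tau )\setminus\alpha^\perp$ of nondisjoint actions as an $F_\sigma$, by stratifying such nontrivial intertwiners according to a quantitative lower bound on how far they are from trivial and an upper bound on their ``size,'' each stratum being compact and each defining condition being closed.

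First I would make the intertwiner data live in a compact space. The unit ball of the space of $\tau$-preserving u.c.p.\ maps $M\to M$, topologized by pointwise $\|\!\cdot\!\|_2$-convergence on the dense sequence $\{ a_i \}$, is a compact metrizable space $\Phi$; this is the noncommutative analogue of the compactness used in the commutative joining picture, and follows from the fact that $\varphi$ is determined by its values on $\{ a_i \}$ together with the uniform bound $\|\varphi (a)\|_2 \leq \|a\|_2$ coming from $\tau$-preservation and the Kadison--Schwarz inequality. For fixed $n\in\Nb$, let $C_n$ be the set of pairs $(\beta ,\varphi )\in\Act (G,M,\tau )\times\Phi$ satisfying the intertwining relations $\varphi (\alpha_{s_j}(a_i )) = \beta_{s_j}(\varphi (a_i ))$ for all $i,j$ together with the nontriviality condition $\sum_{i=1}^n 2^{-i}\| \varphi (a_i ) - \tau (a_i )\unit_M \|_2 \geq 1/n$. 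Each defining condition is closed in the product topology (the intertwining equations are closed because both sides depend continuously on $(\beta ,\varphi )$ in the pointwise $\|\!\cdot\!\|_2$-topology, using continuity of the $\Act$-topology in the $G$-variable along $\{ s_j \}$, and the nontriviality inequality is closed by continuity of $\varphi\mapsto\varphi (a_i )$), so $C_n$ is closed in $\Act (G,M,\tau )\times\Phi$. Its projection $\pi_n (C_n )\subseteq\Act (G,M,\tau )$ is then the continuous image of a set that is closed in a $\sigma$-compact space; writing $\Phi = \bigcup_k \Phi_k$ as a countable union of compacta and intersecting, each $\pi_n (C_n \cap (\Act (G,M,\tau )\times\Phi_k ))$ is the projection of a compact set and hence closed, so $\pi_n (C_n )$ is $F_\sigma$.

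Finally, a map $\varphi\in\Phi$ is the trivial map $a\mapsto\tau (a)\unit_M$ precisely when $\varphi (a_i ) = \tau (a_i )\unit_M$ for every $i$, i.e.\ when $\| \varphi (a_i ) - \tau (a_i )\unit_M \|_2 = 0$ for all $i$; so a nontrivial intertwiner for $\beta$ exists exactly when $(\beta ,\varphi )\in C_n$ for some $n$ and some $\varphi$. Hence $\Act (G,M,\tau )\setminus\alpha^\perp = \bigcup_{n=1}^\infty \pi_n (C_n )$ is a countable union of $F_\sigma$ sets and therefore $F_\sigma$, whence $\alpha^\perp$ is $G_\delta$, as claimed.
\end{proof}

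The main obstacle I anticipate is the compactness input: one must verify that the relevant space of $\tau$-preserving u.c.p.\ maps is genuinely compact metrizable in the pointwise $\|\!\cdot\!\|_2$-topology and that the intertwining relations and the nontriviality inequality are jointly closed, so that the projection argument produces an honest $F_\sigma$. This is exactly the point at which the opposite-algebra correspondence between joinings and equivariant u.c.p.\ maps, recalled in the text preceding the lemma, does the essential work of converting a statement about invariant states into a statement about a compact parameter space of maps; the commutative prototype is del Junco's argument \cite{dJ}, and the noncommutative adaptation requires only that the joining-correspondence be continuous in the action variable.
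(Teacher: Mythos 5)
There is a genuine gap, and it sits exactly where you flagged it: the compactness claim. The set of $\tau$-preserving u.c.p.\ maps $M\to M$ is \emph{not} compact in the topology of pointwise $\|\cdot\|_2$-convergence when $M$ is infinite-dimensional. Your justification (determination by values on $\{a_i\}$ plus the bound $\|\varphi(a)\|_2\le\|a\|_2$) only embeds $\Phi$ into a product of $\|\cdot\|_2$-balls of $M$, and those balls are not compact: the $\|\cdot\|_2$-metric on the operator-norm unit ball is complete and bounded but not totally bounded (e.g.\ an infinite family of Rademacher-type unitaries or orthogonal trace-$\tfrac12$ projections is uniformly separated in $\|\cdot\|_2$). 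Concretely, for $M=L^\infty(\Tb)$ the trace-preserving endomorphisms $\varphi_n(f)(x)=f(2^nx)$ satisfy $\|\varphi_n(e_1)-\varphi_m(e_1)\|_2=\sqrt2$ for $n\ne m$, so they have no point-$\|\cdot\|_2$-convergent subsequence; they do converge point-weak$^*$ (to $f\mapsto\int f$), which shows the two topologies genuinely differ here. Since your entire $F_\sigma$ argument rests on writing $\Phi$ as a countable union of compacta in this topology and projecting closed sets along them, the proof does not go through as written. A second, independent problem appears even if you pass to the correct compact topology (point-weak$^*$ on the unit ball, which \emph{is} compact metrizable): your nontriviality condition $\sum_i 2^{-i}\|\varphi(a_i)-\tau(a_i)\unit\|_2\ge 1/n$ is not closed there, because $\|\cdot\|_2$ is only weak$^*$-lower semicontinuous and superlevel sets of l.s.c.\ functions need not be closed; you would have to replace it by conditions of the form $|\omega(\varphi(a_i))-\tau(a_i)\omega(\unit)|\ge 1/n$ for $\omega$ ranging over a countable dense subset of $M_*$.

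For comparison, the paper's proof is organized precisely to avoid both issues. It defines open sets $S(K,\Omega,\Theta,\Upsilon,\varepsilon)$ by an \emph{approximate} intertwining hypothesis with an existential $\delta$ (``there is $\delta>0$ such that every $\tau$-preserving u.c.p.\ $\varphi$ with $\|\alpha_s(\varphi(a))-\varphi(\beta_s(a))\|_2<\delta$ is $\varepsilon$-close to trivial when tested against finitely many $a\in\Theta$ and $\omega\in\Upsilon\subseteq M_*$''); openness is then immediate by halving $\delta$, with no projection of closed sets needed. The compactness enters only once, at the very end, and in the correct topology: from a sequence of approximate intertwiners that are uniformly nontrivial one extracts a point-weak$^*$ limit point, which is an exact nontrivial equivariant u.c.p.\ map and hence a nonproduct joining. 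If you rework your argument with $\Phi$ carrying the point-weak$^*$ topology and with nontriviality measured against normal functionals rather than in $\|\cdot\|_2$, the joint closedness of the intertwining relations can be checked (pair against functionals $\tau(\cdot\,b)$ and use that $\|\beta^{(k)}_s(b)-\beta_s(b)\|_2\to0$), and your projection scheme becomes a viable alternative route — but at that point it is essentially a reshuffling of the paper's argument.
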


\begin{proof}
For a compact set $K\subseteq G$, finite sets $\Omega ,\Theta\subseteq M$ and $\Upsilon\subseteq M_*$,
and $\varepsilon > 0$ we write $S(K,\Omega ,\Theta ,\Upsilon , \varepsilon )$ for the
set of all $\beta\in\Act (G,M,\tau )$ for which there exists a $\delta > 0$ such that
if $\varphi : M\to M$ is a $\tau$-preserving u.c.p.\ map satisfying
$\| \alpha_s (\varphi (a)) - \varphi (\beta_s (a)) \|_2 < \delta$ for all $s\in K$ and
$a\in\Omega$ then $| \omega (\varphi (a)) - \tau (a)\omega (\unit ) | < \varepsilon$ for all
$a\in\Theta$ and $\omega\in\Upsilon$. It is readily checked that
$S(K,\Omega ,\Theta ,\Upsilon , \varepsilon )$ is open. Take an increasing
sequence $K_1 \subseteq K_2 \subseteq\cdots$ of compact subsets of $G$ such that
$\bigcup_{n=1}^\infty K_n$ is dense in $G$, and increasing sequences
$\Omega_1 \subseteq\Omega_2 \subseteq\cdots$ and
$\Upsilon_1 \subseteq\Upsilon_2 \subseteq\cdots$ of finite subsets of $M$ and $M_*$,
respectively, such that $\bigcup_{n=1}^\infty \Omega_n$
is $\tau$-norm dense in $M$ and $\bigcup_{n=1}^\infty \Upsilon_n$ is norm dense in $M_*$.
Then $S := \bigcap_{n=1}^\infty \bigcup_{m=1}^\infty S(K_m , \Omega_m , \Omega_n ,\Upsilon_n , 1/n)$
is a $G_\delta$, and to complete the proof we will show that it is equal to $\alpha^\perp$.

Clearly $S\subseteq\alpha^\perp$. So let $\beta\in\Act(G,M,\tau ) \setminus S$ and let us show that
$\beta\notin\alpha^\perp$. For some $n\in\Nb$ we have
$\beta\notin \bigcup_{m=1}^\infty S(K_m , \Omega_m , \Omega_n , \Upsilon_n , 1/n)$. Then for every
$m\in\Nb$ there is a $\tau$-preserving u.c.p.\ map $\varphi_m : M\to M$ such that
$\| \alpha_s (\varphi_m (a)) - \varphi_m (\beta_s (a)) \|_2 < 1/m$ for all $s\in K_m$ and
$a\in\Omega_m$ and
$\sup_{a\in\Omega_n ,\omega\in\Upsilon_n} | \omega (\varphi_m (a)) - \tau (a)\omega (\unit ) | \geq 1/n$.
We can then find
a subsequence $\{ \varphi_{m_k} \}_{k=1}^\infty$, an $a_0 \in\Omega_n$, and an $\omega_0 \in\Upsilon_n$
such that $| \omega_0 (\varphi_{m_k} (a_0 )) - \tau (a_0 )\omega_0 (\unit ) | \geq 1/n$ for all $k$.
Now take a point-weak$^*$ limit point $\varphi$ of $\{ \varphi_{m_k} \}_{k=1}^\infty$.
Then $\varphi$ is $\tau$-preserving, $\alpha_s \circ\varphi = \varphi\circ\beta_s$
for all $s\in G$, and $| \omega_0 (\varphi (a_0 )) - \tau (a_0 )\omega_0 (\unit ) | \geq 1/n$,
so that $\varphi$ defines a joining different from the product one. Thus $\beta\notin\alpha^\perp$, and so
$S = \alpha^\perp$, as desired.
\end{proof}

\begin{lemma}\label{L-disjoint zero entropy}
Suppose that $G$ is countably infinite and amenable. Let $\alpha$ be an action
in $\Aut (G,R)$ with $h_\tau (\alpha ) = 0$. Then $\alpha^\perp$ is a dense $G_\delta$ subset of $\Aut (G,R)$.
\end{lemma}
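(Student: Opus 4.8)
The plan is to combine Lemma~\ref{L-disjoint}, which gives that $\alpha^\perp$ is a $G_\delta$ for any $\alpha\in\Act(G,R)$, with a density argument driven by the entropy and disjointness results established above. Since Lemma~\ref{L-disjoint} already yields that $\alpha^\perp$ is a $G_\delta$, the only thing left to prove is that $\alpha^\perp$ is dense in $\Act(G,R)$. By Lemma~\ref{L-free dense orbit}, every free action in $\Act(G,R)$ has dense orbit, so it suffices to exhibit a single free action lying in $\alpha^\perp$: its orbit will then be dense, and since $\alpha^\perp$ is $\Aut(R)$-invariant (disjointness is a conjugacy-invariant notion), density of $\alpha^\perp$ follows immediately.

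First I would take $\beta$ to be an action conjugate to the Bernoulli shift on the weak operator closure of $M_2^{\otimes G}$. This $\beta$ is free, as recorded in Subsection~\ref{SS-freeness}. Since $\alpha$ has zero CNT entropy by hypothesis, Lemma~\ref{L-entropy disjoint} applies directly and tells us that $\alpha$ is disjoint from $\beta$, i.e.\ $\beta\in\alpha^\perp$. Thus $\alpha^\perp$ contains a free action.

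Now I would invoke Lemma~\ref{L-free dense orbit}: because $G$ is countably infinite and amenable, the free action $\beta$ has dense orbit in $\Act(G,R)$. Disjointness is preserved under conjugating $\beta$ by any $\theta\in\Aut(R)$ (conjugation acts on both sides of a joining and carries the unique joining $\tau\otimes\tau$ to itself), so the entire orbit of $\beta$ lies in $\alpha^\perp$. A dense subset of $\Act(G,R)$ thus sits inside $\alpha^\perp$, giving density. Combined with the $G_\delta$ conclusion of Lemma~\ref{L-disjoint}, this shows $\alpha^\perp$ is a dense $G_\delta$, as desired.

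I do not expect any serious obstacle here, since all the real work is already encapsulated in the preceding lemmas; the only point requiring a line of justification is the $\Aut(R)$-invariance of $\alpha^\perp$, which reduces to checking that if $\beta\in\alpha^\perp$ then $\theta\cdot\beta\in\alpha^\perp$ for every $\theta\in\Aut(R)$. This follows because a joining of $\alpha$ with $\theta\cdot\beta$ corresponds, via the bijection with $G$-equivariant trace-preserving u.c.p.\ maps described before Definition~\ref{D-disjoint}, to a joining of $\alpha$ with $\beta$ after composing with $\theta^{-1}$, so uniqueness of the joining transfers. This verification is routine given the correspondence already set up in the text.
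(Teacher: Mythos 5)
Your proposal is correct and follows essentially the same route as the paper: reduce to density via Lemma~\ref{L-disjoint}, then use the Bernoulli shift (free, hence with dense orbit by Lemma~\ref{L-free dense orbit}) together with Lemma~\ref{L-entropy disjoint}. The only cosmetic difference is that you derive the $\Aut(R)$-invariance of $\alpha^\perp$ separately, whereas Lemma~\ref{L-entropy disjoint} as stated already gives disjointness from the entire conjugacy class of the Bernoulli shift, making that step unnecessary.
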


\begin{proof}
By Lemma~\ref{L-disjoint} it suffices to show that $\alpha^\perp$ is dense. As observed
in Subsection~\ref{SS-freeness}, the Bernoulli shift on the tracial weak operator closure of
$M_2^{\otimes G}$ is free, and so any action in $\Act (G,R)$ that is conjugate to it has
dense orbit by Lemma~\ref{L-free dense orbit}. Thus $\alpha^\perp$ is dense
by Lemma~\ref{L-entropy disjoint}.
\end{proof}

\begin{lemma}\label{L-meager}
Suppose that $G$ is countably infinite and amenable. Then every orbit in $\Act (G,R)$ is meager.
\end{lemma}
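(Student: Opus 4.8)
The plan is to deduce meagerness from the two preceding lemmas by splitting on the value of the CNT entropy. The two facts I will lean on are that $h_\tau$ is a conjugacy invariant---so that every action in a single orbit has the same entropy---and the observation recorded in Subsection~\ref{SS-freeness} that, since $R\neq\Cb$, no action is disjoint from any of its conjugates (the diagonal joining of an action with itself is never the product joining). Conjugacy-invariance of $h_\tau$ is routine: if $\alpha' = \gamma\circ\alpha\circ\gamma^{-1}$ for $\gamma\in\Aut(R)$, then $\alpha'_s\circ(\gamma\circ\delta) = \gamma\circ(\alpha_s\circ\delta)$ for every channel $\delta$, and precomposing all the maps with the $\tau$-preserving automorphism $\gamma$ leaves the abelian-model entropies $H_\tau$ unchanged, so $h_\tau(\gamma\circ\delta,\alpha') = h_\tau(\delta,\alpha)$; taking the supremum over channels gives $h_\tau(\alpha') = h_\tau(\alpha)$.

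First I would fix $\beta\in\Act(G,R)$ and treat the case $h_\tau(\beta)>0$. By conjugacy-invariance every action in the orbit of $\beta$ has entropy $h_\tau(\beta)>0$, so the orbit is contained in the set $\{\alpha : h_\tau(\alpha)\neq 0\}$. By Lemma~\ref{L-zero entropy} the zero-entropy actions form a dense $G_\delta$, hence a comeager set, so its complement is meager and therefore so is the orbit of $\beta$.

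In the remaining case $h_\tau(\beta)=0$, I would instead use disjointness. By Lemma~\ref{L-disjoint zero entropy}, $\beta^\perp$ is a dense $G_\delta$, hence comeager. Every point of the orbit of $\beta$ is a conjugate $\gamma\cdot\beta$, and by the observation above together with the symmetry of disjointness, $\gamma\cdot\beta$ is not disjoint from $\beta$; thus the orbit is disjoint from $\beta^\perp$ and so is contained in the meager complement of $\beta^\perp$. Since the two cases are exhaustive, this gives the lemma.

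Essentially all of the difficulty has already been absorbed into Lemmas~\ref{L-zero entropy} and~\ref{L-disjoint zero entropy}, which in turn rest on Kawahigashi's realization of arbitrary entropy values, Ocneanu's density of orbits of free actions, and the entropy/disjointness estimate of Lemma~\ref{L-entropy disjoint}. The only points that require care at this stage are the conjugacy-invariance of $h_\tau$ and the verification that the two cases genuinely cover every orbit; I do not expect any serious obstacle beyond bookkeeping.
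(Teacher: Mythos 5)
Your proof is correct and takes essentially the same route as the paper: the paper combines the same two ingredients (Lemma~\ref{L-zero entropy} and Lemma~\ref{L-disjoint zero entropy}) in a proof by contradiction --- a non-meager orbit must meet the comeager zero-entropy set, forcing $h_\tau(\alpha)=0$ and then a contradiction with disjointness --- rather than your explicit case split on $h_\tau(\beta)$. The only difference is that you spell out the conjugacy-invariance of $h_\tau$ and the non-self-disjointness observation, both of which the paper leaves implicit.
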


\begin{proof}
Let $\alpha\in\Act (G,R)$ and suppose that the orbit is not meager. Then $h_\tau (\alpha ) = 0$
by Lemma~\ref{L-zero entropy}. But then $\alpha^\perp$ is a dense $G_\delta$ by
\ref{L-disjoint zero entropy} and every action in $\alpha^\perp$ is disjoint from every action in the orbit of
$\alpha$, yielding a contradiction.
\end{proof}

\subsection{Turbulence}\label{SS-turb}

All infinite tensor products in what follows are with respect to the trace.

\begin{lemma}\label{L-turbulent point}
Let $\{ \alpha_i \}_{i=1}^\infty$ be a dense sequence in
$\Act (G,R)$, and let $\alpha$ be an element of $\Act (G,R)$ which is conjugate to
$\bigotimes_{i=1}^\infty (\alpha_i \otimes\id_R ) \in
\Act (G,(R\overline{\otimes} R)^{\overline{\otimes}\Nb} )$.
Then $\alpha$ is a turbulent point and has dense orbit for the
action of $\Aut (R)$ on $\Act (G,R)$.
\end{lemma}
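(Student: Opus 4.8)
The plan is to handle the two conclusions separately, keeping fixed throughout an isomorphism identifying $R$ with $\mathcal{M}:=\bigotimes_{i=1}^\infty(R_i\overline{\otimes}R_i')$, where each $R_i\cong R_i'\cong R$, through which $\alpha$ acts as $\bigotimes_{i=1}^\infty(\alpha_i\otimes\id_{R_i'})$. The density of the orbit I would obtain exactly as in the density part of Lemma~\ref{L-free dense}, that is, by the argument of Lemma~3.5 of \cite{KP}: given $\beta\in\Act(G,R)$ together with data $K\subseteq G$ compact, $\Omega\subseteq R$ finite, and $\varepsilon>0$, use the density of $\{\alpha_i\}$ to choose $i_0$ with $\|\alpha_{i_0,s}(a)-\beta_s(a)\|_2<\varepsilon$ for all $s\in K$ and $a\in\Omega$; since the factor $R_{i_0}\subseteq\mathcal{M}$ carries $\alpha_{i_0}$ and $R$ is McDuff, one produces a $\gamma\in\Aut(R)$ for which $\gamma\cdot\alpha$ agrees with $\alpha_{i_0}$, and hence with $\beta$, on $(K,\Omega)$ to within $\varepsilon$. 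The trivial factors $R_i'$ play no role in this part.

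For turbulence at $\alpha$ the plan is to imitate the rotation argument of Lemma~\ref{L-turbpoint}, with inner automorphisms of $R$ replacing the unitaries on $\cH$ and the density of $\{\alpha_i\}$ replacing Glimm's lemma. Shrinking, I take $Y=Y_{\alpha,K,\Omega,\varepsilon}$ and $Z=Z_{\id,\Omega,\varepsilon}$, fix an arbitrary $\sigma\in Y$ and finite data $L,\Upsilon,\delta$ (with $\Omega\subseteq\Upsilon$ and $\delta$ small enough that $\|\sigma_s(a)-\alpha_s(a)\|_2+\delta<\varepsilon$ for $s\in K$, $a\in\Omega$), and aim to build a norm-continuous path of inner automorphisms $\theta_t=\Ad u_t$, $t\in[0,\pi/2]$, with $u_0=\unit$, $\theta_t\cdot\alpha\in Y$ for all $t$, and $\theta_{\pi/2}\cdot\alpha\in Y_{\sigma,L,\Upsilon,\delta}$. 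As in Lemma~\ref{L-turbpoint}, continuity then yields $0=t_0<\cdots<t_m=\pi/2$ with $\theta_{t_i}\theta_{t_{i-1}}^{-1}\in Z$, so that $\sigma\in\overline{\orb(\alpha,Y,Z)}$; since $\sigma\in Y$ is arbitrary this gives $Y\subseteq\overline{\orb(\alpha,Y,Z)}$ and hence turbulence of $\alpha$.

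To construct the path I would use the trivial factors to supply the orthogonal room played by $VE$ in Lemma~\ref{L-turbpoint}, and the dense sequence to supply the approximate implementation of $\sigma$ played there by $V^*\pi(a)V\approx P_E\sigma(a)|_E$. Choosing an $\alpha$-fixed projection $e$ inside one of the factors $R_j'$ splits $R$ into the two $\alpha$-invariant corners $eRe$ and $(1-e)R(1-e)$, each isomorphic to $R$; picking by density an index $i_1$ with $\|\alpha_{i_1,s}(a)-\sigma_s(a)\|_2$ tiny for $s\in K\cup L$, $a\in\Upsilon$, one transports the finite set $\Upsilon\cup\alpha(K\cup L)\Upsilon\cup\sigma(K\cup L)\Upsilon$, approximated in $\|\cdot\|_2$ by a finite-dimensional $F$ in one corner, into a matched copy $\hat{F}$ in the other corner carrying the $\alpha_{i_1}\approx\sigma$ dynamics (the orthogonality of the corners and the factorwise action of $\alpha$ giving the analog of $VE\perp\spn(E\cup\pi(K\cup L)^*E)$). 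Letting $V\in R$ be a partial isometry exchanging these two corners and setting $u_t=\exp(t(V^*-V))$ produces the desired rotation: at $t=\pi/2$ it carries the $\Upsilon$-data into the $\hat{F}$-corner, where $\alpha$ acts as $\alpha_{i_1}\approx\sigma$, yielding $\theta_{\pi/2}\cdot\alpha\in Y_{\sigma,L,\Upsilon,\delta}$, while for intermediate $t$ a $\cos^2/\sin^2$ interpolation as in Lemma~\ref{L-turbpoint} shows $\|(\theta_t\cdot\alpha)_s(a)-\alpha_s(a)\|_2\le\sin^2(t)\,\|\sigma_s(a)-\alpha_s(a)\|_2+\delta<\varepsilon$ for $a\in\Omega$, $s\in K$, keeping the path in $Y$.

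The main obstacle, and what makes the action case genuinely harder than the representation case, is exactly this reconciliation: the rotating family must consist of honest automorphisms at every $t$ rather than arbitrary unitaries on $L^2(R)$, which forces the rotation to be inner and therefore to be realized between matched finite-dimensional subalgebras that only approximate the prescribed data. Verifying that the operator-level interpolation obtained by conjugating $\alpha$ by $u_t$ stays within $\varepsilon$ on $\Omega$—the substitute for the clean vector-level estimate of Lemma~\ref{L-turbpoint}, where every unitary is available—is where the real care is required, and arranging the orthogonal copy and its matching partial isometry to sit compatibly with the factorwise dynamics of $\alpha$ is the technical heart of the argument.
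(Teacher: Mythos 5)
Your density argument is fine (it is essentially the argument of Lemma~\ref{L-free dense}, and the paper also extracts density from the inclusion $Y\subseteq\overline{\cO(\alpha,Y,Z)}$ by letting $\varepsilon$ be large). The turbulence argument, however, has a genuine gap, and it sits exactly where you say ``the real care is required'': the continuous rotation path $u_t=\exp(t(V^*-V))$ cannot be shown to keep $\Ad u_t\cdot\alpha$ inside $Y$ for intermediate $t$. The rotation trick works in Lemma~\ref{L-turbpoint} because the estimate there is made in a weak-operator sense: the cross terms of order $\cos t\sin t$, and the term $\sin t\, P^\perp_{E\oplus VE}\pi(a)V\xi$ whose norm is of order $\|a\|$, are annihilated by the projection $P_E$ before anything is measured. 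In $\Act(G,R)$ the relevant quantity is the full $\|\cdot\|_2$-norm of $(\Ad u_t\cdot\alpha)_s(a)-\alpha_s(a)$ and nothing kills the cross terms. Writing elements as $2\times 2$ matrices over $eRe$ via $V$, one finds that this difference contains terms of order $\cos t\sin t$ governed not by $\|\sigma_s(a)-\alpha_s(a)\|_2$ but by $\|\alpha_s(V)-V\|_2$, i.e.\ by how far $V$ is from being fixed by the dynamics. And $V$ cannot be fixed: if $\alpha_s(V)=V$ then $\Ad V$ intertwines the two corners exactly, so they carry the same dynamics and the construction accomplishes nothing; if the second corner is to carry the $\alpha_{i_1}\approx\sigma$ dynamics on $\hat{F}$ then $\alpha_s(V)\neq V$ and the cocycle $\alpha_s(V)V^*$ is not controlled by the $\|\cdot\|_2$-closeness of $\alpha_{i_1}$ to $\sigma$ on finite data. (Note also that cutting by a projection in a trivial factor $R_j'$ produces two corners carrying the \emph{same} global dynamics, so your matched copy $\hat{F}$ must really sit inside the tensor factor $R_{i_1}$, which forces the mismatch between $V$ and $\alpha$.)

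The paper's proof uses a different interpolation mechanism in place of the unitary rotation. It realizes $\alpha$ as $\sigma\otimes\alpha_l\otimes\id_N\otimes\rho$ on $R\overline{\otimes}R\overline{\otimes}N\overline{\otimes}R$ with $N$ a I$_n$ factor, fixes minimal projections $e_1,\dots,e_n$ of a diagonal subalgebra $C\subseteq N$, and for $E\subseteq\{1,\dots,n\}$ applies the flip of the first two tensor factors only over $p=\sum_{i\in E}e_i$. The chain from the identity to the full flip is the discrete sequence $\hat{\gamma}_{\{1\}},\hat{\gamma}_{\{1,2\}},\dots,\hat{\gamma}_{\{1,\dots,n\}}$, each $\gamma_E$ being approximated by an inner automorphism using approximate innerness in $\Aut(R)$. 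Each single step moves $a$ by at most about $2\|a\|/\sqrt{n}$ in $\|\cdot\|_2$ --- smallness comes from the trace of $e_i$, not from a $\sin/\cos$ coefficient --- and the intermediate actions send $a'$ to $\sigma_s(a')\otimes(1-p)+\alpha_{l,s}(a')\otimes p$, which is within $\|\sigma_s(a')-\alpha_{l,s}(a')\|_2$ of $\sigma_s(a')\otimes\unit$ with no cross terms, because the interpolation is an orthogonal direct sum over $p$ and $1-p$. This ``measure-theoretic'' interpolation over projections of small trace is the missing idea; without it, or some substitute controlling the cocycle $\alpha_s(V)V^*$, your path cannot be shown to stay in $Y$.
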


\begin{proof}
To show that $\alpha$ is a turbulent point,
let $Y$ be a neighbourhood of $\alpha$ in $\Act (G,R)$ and $Z$ a neighbourhood of
$\id_R$ in $\Aut (R)$ and let us demonstrate that the closure of the local orbit $\cO (\alpha ,Y,Z)$
has nonempty interior. By shrinking $Y$ and $Z$ if necessary we may suppose that
$Y = Y_{\alpha , K,\Omega ,\varepsilon}$ and $Z = Z_{\id ,\Omega ,\varepsilon}$ for some
compact set $K\subseteq G$, finite set $\Omega\subseteq R$, and $\varepsilon > 0$.

Let $\theta\in Y$. We will argue that $\theta$ is contained in
$\overline{\cO (\alpha ,Y,Z)}$. Let $L$ be a compact subset of $G$ containing $K$ and
$\Upsilon$ a finite subset of $R$ containing $\Omega$. Let $\delta > 0$
be such that $\| \theta_s (a) - \alpha_s (a) \|_2 < \varepsilon - \delta$
for all $a\in\Omega$ and $s\in K$.
For simplicity we will view $R$ as $(R\overline{\otimes} R)^{\overline{\otimes}\Nb}$ with
$\alpha$ acting as $\bigotimes_{i=1}^\infty (\alpha_i \otimes\id_R )$.
Take a $k\in\Nb$ large enough so that there is a finite-dimensional subfactor
$N\subseteq (R\overline{\otimes} R)^{\overline{\otimes} [1,k]}$ such that for every
$a\in\bigcup_{s\in L\cup\{ e\}} \theta_s (\Upsilon )$
we have $\| E(a) - a \|_2 < \delta /12$ where $E$ is the trace-preserving conditional
expectation from $(R\overline{\otimes} R)^{\overline{\otimes}\Nb}$ onto
$N\otimes\unit$, with $\unit$ denoting here and for the remainder of the paragraph
the unit in $(R\overline{\otimes} R)^{\overline{\otimes} [k+1,\infty ]}$.
For $a\in\Upsilon$ we write $a'$ for the element of $N$ such that $E(a) = a' \otimes\unit$.
Extend the embedding
$N\hookrightarrow (R\overline{\otimes} R)^{\overline{\otimes}\Nb}$ given by $a\mapsto a\otimes\unit$
to an isomorphism
$\Phi : (R\overline{\otimes} R)^{\overline{\otimes} [1,k]} \to (R\overline{\otimes} R)^{\overline{\otimes}\Nb}$
and define the action $\tilde{\theta}$ of $G$ on $(R\overline{\otimes} R)^{\overline{\otimes} [1,k]}$ by
$s\mapsto \Phi^{-1} \circ\theta_s \circ\Phi$.
For $a\in\Upsilon$ and $s\in L$ we have, noting that $\Phi (a' ) = E(a)$ and
$\Phi^{-1} (E(\theta_s (a)))\otimes\unit = E(\theta_s (a))$,
\begin{align*}
\| \theta_s (a) - \tilde{\theta}_s (a' )\otimes\unit \|_2
&= \| \theta_s (a) - \Phi^{-1} (\theta_s (E(a)))\otimes\unit \|_2 \\
&\leq \| \theta_s (a) - E(\theta_s (a)) \|_2 +
\| \Phi^{-1} (E(\theta_s (a)) - \theta_s (a))\otimes\unit \|_2 \\
&\hspace*{20mm} \ + \| \Phi^{-1} (\theta_s (a - E(a)))\otimes\unit \|_2 \\
&< \frac{\delta}{12} + \frac{\delta}{12} + \frac{\delta}{12} = \frac{\delta}{4} .
\end{align*}
Abbreviate $\bigotimes_{i=1}^k (\alpha_i \otimes\id_R )$
to $\sigma$ and fix an identification of $(R\overline{\otimes} R)^{\overline{\otimes} [1,k]}$ with $R$, so that
$\sigma$ and $\tilde{\theta}$ are regarded as actions on $R$.
By the density of the sequence $\{ \alpha_i \}_{i=1}^\infty$ we can find an integer
$l>k$ such that $\| \tilde{\theta}_s (a' ) - \alpha_{l,s} (a' ) \|_2 < \delta /4$ for
all $s\in L$ and $a\in\Upsilon$.
Note that, for all $s\in K$ and $a\in\Omega$,
\begin{align*}
\| \sigma_s (a' ) - \alpha_{l,s} (a' ) \|_2
&\leq \| \sigma_s (a' ) \otimes\unit - \alpha_s (a) \|_2 + \| \alpha_s (a) - \theta_s (a) \|_2 \\
&\hspace*{10mm} \ + \| \theta_s (a) - \tilde{\theta}_s (a' )\otimes\unit \|_2
+ \| \tilde{\theta}_s (a' ) - \alpha_{l,s} (a' ) \|_2 \\
&< \frac{\delta}{12} + \varepsilon - \delta + \frac{\delta}{4} + \frac{\delta}{4} \\
&= \varepsilon - \frac{5\delta}{12} .
\end{align*}

Take an integer $n > 64\varepsilon^{-2} \max \{ \| a \| : a\in\Omega \}$.
Since $R\cong M_n \otimes R$ and $\alpha$ contains infinitely many tensor product factors equal to $\id_R$,
we can view $\alpha$ as $\sigma\otimes\alpha_l \otimes\id_N \otimes\rho$
acting on $R\overline{\otimes} R\overline{\otimes} N\overline{\otimes} R$
where $N$ is a I$_n$ factor and $\rho$ is some automorphism of $R$.
Let $C$ be a commutative $n$-dimensional $^*$-subalgebra of $N$.
Let $e_1 , \dots , e_n$ be the minimal projections of $C$.
For a set $E \subseteq \{ 1,\dots , n \}$ we write $\gamma_E$ for the automorphism of
$R\overline{\otimes} R\otimes C$ which sends $\sum_{i=1}^n a_i \otimes e_i \in (R\overline{\otimes} R)\otimes C$ to
\[ \sum_{i\in E} \beta (a_i ) \otimes e_i + \sum_{i\in \{ 1,\dots ,n \} \setminus E} a_i \otimes e_i \]
where $\beta$ is the tensor product flip automorphism of $R\overline{\otimes} R$.
Then for every $a\in\Upsilon$ and $s\in L$ the image $a''$ of $a' \otimes\unit_{R \otimes C}$ under
$\gamma_E \cdot (\sigma_s \otimes\alpha_{l,s} \otimes\id_C )$ is equal to
\[ \sigma_s (a' ) \otimes\unit_R \otimes (1-p) + \alpha_{l,s} (a' ) \otimes\unit_R \otimes p \]
where $p$ is the projection $\sum_{i\in E} e_i$, and hence, for $a\in\Omega$ and $s\in K$,
\[ \| a'' - \sigma_s (a' )\otimes\unit_{R \otimes C} \|_2 \leq
\| \sigma_s (a' ) - \alpha_{l,s} (a' ) \|_2 \| p \|_2 < \varepsilon - \frac{5\delta}{12} . \]
Since every automorphism of the hyperfinite II$_1$ factor is approximately inner \cite[Thm.\ 2.16]{Tak3}
and $\gamma_E$ fixes the elements of $\unit_{R\overline{\otimes} R} \otimes C$,
we can find a unitary $u \in R\overline{\otimes} R\otimes C$ such that
$\| u b u^* - \gamma_E (b) \|_2 < \delta /8$
for all elements $b$ equal to
$((\sigma_s \otimes\alpha_{l,s} \otimes\id_C )\circ\gamma_E^{-1} )(a' \otimes\unit_{R \otimes C} )$
for some $s\in L\cup \{ e \}$ and $a\in\Upsilon$ or to $a' \otimes\unit_{R\otimes C}$ for some $a\in\Upsilon$.
Regarding $R\overline{\otimes} R\otimes C$ henceforth as a subalgebra of
$R\overline{\otimes} R \overline{\otimes} N \overline{\otimes} R$ under the embedding given
on elementary tensors by $a\otimes a\otimes c \mapsto a\otimes a\otimes c\otimes\unit_R$,
we set $\hat{\gamma}_E = \Ad u \in\Aut (R\overline{\otimes} R \overline{\otimes} N \overline{\otimes} R)$.
With an untagged $\unit$ denoting henceforth the unit of $R\overline{\otimes} N\overline{\otimes} R$,
for all $a\in\Upsilon$ we have, noting that $\gamma_E^{-1} (a' \otimes\unit )$ makes sense
as $a' \otimes\unit$ lies in the subalgebra of which $\gamma_E$ is an automorphism,
\[ \| \hat{\gamma}_E^{-1} (a' \otimes\unit ) - \gamma_E^{-1} (a' \otimes\unit ) \|_2
= \| \gamma_E (\gamma_E^{-1} (a' \otimes\unit )) - \hat{\gamma}_E (\gamma_E^{-1} (a' \otimes\unit )) \|_2
< \frac{\delta}{8} \]
and hence, writing $\tilde{\alpha}$ for the restriction $s\mapsto \sigma_s \otimes\alpha_{l,s} \otimes\id_C$
of $\alpha$ to $R\overline{\otimes} R\otimes C$, for $s\in L$ we have
\begin{align*}
\lefteqn{\| (\hat{\gamma}_E \cdot\alpha )_s (a' \otimes\unit ) -
(\gamma_E \cdot\tilde{\alpha} )_s (a' \otimes\unit ) \|_2} \hspace*{25mm} \\
&\leq \| \hat{\gamma}_E \circ\alpha_s (\hat{\gamma}_E^{-1} (a' \otimes\unit )
- \gamma_E^{-1} (a' \otimes\unit )) \|_2 \\
&\hspace*{10mm} \ + \| \hat{\gamma}_E (\alpha_s \circ\gamma_E^{-1} (a' \otimes\unit )) -
\gamma_E (\alpha_s \circ\gamma_E^{-1} (a' \otimes\unit ) \|_2 \\
&< \frac{\delta}{4} .
\end{align*}
Thus for $a\in\Omega$ and $s\in K$ we have
\begin{align*}
(\hat{\gamma}_E \cdot\alpha )_s (a) &\approx_{\delta /12} (\hat{\gamma}_E \cdot\alpha )_s (a' \otimes\unit ) \\
&\approx_{\delta /4} (\gamma_E \cdot\tilde{\alpha} )_s (a' \otimes\unit ) \\
&\approx_{\varepsilon - 5\delta /12} \sigma_s (a' ) \otimes\unit \\
&= \alpha_s (a' \otimes\unit ) \\
&\approx_{\delta /12} \alpha_s (a)
\end{align*}
so that $\hat{\gamma}_E \cdot\alpha \in Y$,
while in the case that $E = \{ 1,\dots , n\}$ we have, for $a\in\Upsilon$ and $s\in L$,
\begin{align*}
(\hat{\gamma}_E \cdot\alpha )_s (a)
&\approx_{\delta /12} (\hat{\gamma}_E \cdot\alpha )_s (a' \otimes\unit ) \\
&\approx_{\delta /4} (\gamma_{\{ 1,\dots ,n\}} \cdot\tilde{\alpha} )_s (a' \otimes\unit ) \\
&= \alpha_{l,s} (a' ) \otimes\unit \\
&\approx_{\delta /4} \tilde{\theta}_s (a' ) \otimes\unit  \\
&\approx_{\delta /4} \theta_s (a)
\end{align*}
so that $\hat{\gamma}_E \cdot\alpha \in Y_{\theta , L,\Upsilon ,\delta}$.

Note that, for all $i=1, \dots ,n$ and $a\in\Omega$,
\[ \| \gamma_{\{ i \}} (a' \otimes\unit ) - a' \otimes\unit \|_2
\leq \| \beta (a' ) - a' \|_2 \| e_i \|_2 < \frac{2\| a \|}{\sqrt{n}} \]
and thus
\begin{align*}
\| \hat{\gamma}_{\{ i \}} (a) - a \|_2 &\leq \| \hat{\gamma}_{\{ i \}} (a - a' \otimes\unit ) \|_2
+ \| \hat{\gamma}_{\{ i \}} (a' \otimes\unit ) - \gamma_{\{ i \}} (a' \otimes\unit ) \|_2 \\
&\hspace*{15mm} \ + \| \gamma_{\{ i \}} (a' \otimes\unit ) - a' \otimes\unit \|_2 +
\| a' \otimes\unit - a \|_2 \\
&< \frac{\varepsilon}{4} + \frac{\varepsilon}{4} + \frac{2\| a \|}{\sqrt{n}} + \frac{\varepsilon}{4} \\
&< \varepsilon ,
\end{align*}
so that $\hat{\gamma}_{\{ i \}} \in Z$.
From the previous paragraph, the action $(\hat{\gamma}_{\{ j \}} \circ\hat{\gamma}_{\{ j-1 \}}
\circ\cdots\circ\hat{\gamma}_{\{ 1 \}} )\cdot\alpha = \hat{\gamma}_{\{ 1,\dots , j \}} \cdot\alpha$
is contained in $Y$ for every $j = 1, \dots , n$, while
$(\hat{\gamma}_{\{ n \}} \circ\hat{\gamma}_{\{ n-1 \}} \circ\cdots\circ\hat{\gamma}_{\{ 1 \}} )\cdot\alpha
= \hat{\gamma}_{\{ 1,\dots ,n\}} \cdot\alpha$ is contained in $Y_{\theta , L,\Upsilon ,\delta}$.
We conclude that $\theta\in \overline{\cO (\alpha ,Y,Z)}$
and hence that $Y\subseteq \overline{\cO (\alpha ,Y,Z)}$, so that
$\overline{\cO (\alpha ,Y,Z)}$ has nonempty interior, as desired.

The density of the orbit of $\alpha$ now also follows, since
we showed that $Y\subseteq \overline{\cO (\alpha ,Y,Z)}$ for any $Y$ of the form
$Y_{\alpha , K,\Omega ,\varepsilon}$.
\end{proof}

The following is the analogue of Theorem~13.3 in \cite{Kechris}.

\begin{theorem}\label{T-meager gen turb}
The following conditions are equivalent:
\begin{enumerate}
\item every $\Aut (R)$-orbit in $\Act (G,R)$ is meager,

\item the action of $\Aut (R)$ on $\Act (G,R)$ is generically turbulent.
\end{enumerate}
Moreover if $G$ does not have property T then (1) and (2) are equivalent
to each of the following:
\begin{enumerate}
\item[(3)] every $\Aut (R)$-orbit in $\WM (G,R)$ is meager,

\item[(4)] the action of $\Aut (R)$ on $\WM (G,R)$ is generically turbulent.
\end{enumerate}
\end{theorem}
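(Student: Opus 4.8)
The plan is to prove the equivalence (1)$\Leftrightarrow$(2) with no restriction on $G$, directly from Lemma~\ref{L-turbulent point}, and then, when $G$ lacks property~(T), to transfer this dichotomy to the invariant set $\WM(G,R)$ by exploiting that it is comeager. Recall that the action is generically turbulent precisely when some orbit is dense, every orbit is meager, and some point is turbulent (Definition~3.20 of \cite{COER}). Lemma~\ref{L-turbulent point} already produces, for arbitrary $G$, a single element of $\Act(G,R)$ that is simultaneously a turbulent point and has dense orbit, so the clauses ``some orbit is dense'' and ``some point is turbulent'' hold with no further hypothesis. Generic turbulence is therefore equivalent to the sole remaining clause that every orbit be meager, which is exactly~(1); the reverse implication (2)$\Rightarrow$(1) is immediate from the definition.

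Assume now that $G$ does not have property~(T). The decisive input is that $\WM(G,R)$ is then a dense $G_\delta$: it is a $G_\delta$ as already noted, and its density is the action counterpart of the genericity of weak mixing in the absence of property~(T) (cf.\ Section~\ref{S-groups} and \cite{KP,BerRos}). The set $\WM(G,R)$ is $\Aut(R)$-invariant, since for $\gamma\in\Aut(R)$ the representation $\kappa_{\gamma\cdot\alpha,0}$ is unitarily equivalent to $\kappa_{\alpha,0}$ and hence weak mixing is preserved under conjugation; being a dense $G_\delta$, it is comeager. I would close the cycle $(1)\Rightarrow(2)\Rightarrow(4)\Rightarrow(3)\Rightarrow(1)$. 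Here $(4)\Rightarrow(3)$ is part of the definition of generic turbulence. For $(3)\Rightarrow(1)$, invariance forces each $\Aut(R)$-orbit to be either contained in $\WM(G,R)$ or contained in its complement; orbits of the former type are meager by~(3), while those of the latter type lie in the meager set $\Act(G,R)\setminus\WM(G,R)$ and so are meager as well, yielding~(1).

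The substance lies in $(2)\Rightarrow(4)$. By Theorem~3.21 of \cite{COER}, generic turbulence of the action on $\Act(G,R)$ furnishes an $\Aut(R)$-invariant dense $G_\delta$ set $X_0$ on which the action is turbulent. I would put $Y=X_0\cap\WM(G,R)$, again an invariant dense $G_\delta$, and check that turbulence restricts from $X_0$ to $Y$: for $x\in Y$ every point of the local orbit $\cO(x,U,V)$ lies on the orbit of $x$ and hence already in the invariant set $Y$, so the local orbit is unchanged upon passage to $Y$, and since $Y$ is dense in $X_0$ the closure of this local orbit relative to $Y$ still has nonempty relative interior. Density and meagerness of orbits pass to $Y$ because $Y$ is comeager in $X_0$, so the action on $Y$, and a fortiori on the comeager set $\WM(G,R)$, is generically turbulent. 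I expect the main obstacle to be exactly this restriction principle together with the density of $\WM(G,R)$: the former is a careful but routine verification that local orbits are insensitive to passing to an invariant dense $G_\delta$, whereas the latter is the genuinely nontrivial ingredient and is the only place the property~(T) hypothesis is used.
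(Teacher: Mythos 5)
Your proof is correct and follows the same route as the paper: Lemma~\ref{L-turbpoint}'s analogue for actions, namely Lemma~\ref{L-turbulent point}, supplies a turbulent point with dense orbit for arbitrary $G$, so generic turbulence reduces to the meagerness of all orbits and (1)$\Leftrightarrow$(2) follows exactly as you argue. The paper's one-sentence treatment of (3) and (4) leaves the transfer to the invariant comeager set $\WM (G,R)$ implicit, and your cycle $(2)\Rightarrow(4)\Rightarrow(3)\Rightarrow(1)$ --- using Theorem~3.21 of \cite{COER} together with the observation that local orbits are unchanged upon restriction to an invariant dense $G_\delta$ --- is a correct and welcome way of filling in exactly that gap.
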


\begin{proof}
By \cite[Thm.\ 3.8]{KP}, the $G_\delta$ subset $\WM (G,R)$ of $\Act (G,R)$ is dense
precisely when $G$ does not have property T. Thus, since
meagerness of orbits is part of the definition of generic turbulence, it suffices to show
(1)$\Rightarrow$(2), and this follows by Lemma~\ref{L-turbulent point}.
\end{proof}

\begin{remark}
By Lemma~\ref{L-free dense}, when $G$ is countable and does not have property T
we can replace $\WM (G,R)$ by the set of free weakly mixing actions in $\Act (G,R)$
in the above theorem.
\end{remark}

\begin{lemma}\label{L-free turbulent}
Suppose that $G$ is countable and amenable. Let $\alpha$ be a free action of $G$ on $R$. Then
$\alpha$ is a turbulent point for the action of $\Aut(R)$ on $\Act(G, R)$.
\end{lemma}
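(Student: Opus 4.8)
The plan is to transfer turbulence from the concrete action of Lemma~\ref{L-turbulent point} to the given free $\alpha$ by means of Ocneanu's cocycle conjugacy theorem in the quantitative form already used in Lemma~\ref{L-free dense orbit}. First I would fix a dense sequence $\{\alpha_i\}_{i=1}^\infty$ in $\Act(G,R)$ that includes a Bernoulli shift, and let $\alpha_0\in\Act(G,R)$ be conjugate to $\bigotimes_{i=1}^\infty(\alpha_i\otimes\id_R)$. By Lemma~\ref{L-turbulent point} this $\alpha_0$ is a turbulent point with dense orbit, and since one of its tensor factors is free it is itself free by Corollary~1.12 of \cite{Kallman}. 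The elementary but essential observation is that \emph{being a turbulent point is invariant under the $\Aut(R)$-action}: for fixed $\gamma\in\Aut(R)$ the map $\beta\mapsto\gamma\cdot\beta$ is a homeomorphism of $\Act(G,R)$ that intertwines the $\Aut(R)$-action with its conjugation $g\mapsto\gamma g\gamma^{-1}$, so it carries local orbits to local orbits and preserves turbulence of points. In particular every action in $\cO(\alpha_0)$ is turbulent, and these are dense.

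Next I would use Ocneanu's theorem to replace $\alpha$, up to conjugacy, by something close to $\alpha_0$. Given neighbourhoods $Y=Y_{\alpha,K,\Omega,\varepsilon}$ of $\alpha$ and $Z=Z_{\id,\Omega,\varepsilon}$ of $\id$, apply the bounded cocycle conjugacy of Lemma~\ref{L-free dense orbit} to the free actions $\alpha$ and $\alpha_0$ with a large finite set $K'\supseteq K$ and a tiny $\varepsilon'$: there are $\theta\in\Aut(R)$ and unitaries $u_s\in R$ for $s\in K'$ with $\|u_s-\unit\|_2<\varepsilon'$ and $\theta^{-1}\alpha_s\theta=(\Ad u_s)\circ(\alpha_0)_s$ for all $s\in K'$. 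By the conjugation-invariance of the previous paragraph, turbulence of $\alpha$ is equivalent to turbulence of $\alpha':=\theta^{-1}\cdot\alpha$; and $\alpha'$ agrees on $K'$ with the cocycle perturbation $(\Ad u_\bullet)\circ\alpha_0$, hence is $\varepsilon'$-close to $\alpha_0$ on the finite data governing the neighbourhoods involved. So it suffices to verify turbulence at this small cocycle perturbation of the structured action $\alpha_0$.

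To produce the required local orbit I would show, for suitable neighbourhoods $Y'$ of $\alpha'$ and $Z'$ of $\id$, that $\overline{\cO(\alpha',Y',Z')}$ contains the dense subset $\cO(\alpha_0)\cap Y'$ of $Y'$ (dense because $\alpha_0$ has dense orbit), which forces $Y'\subseteq\overline{\cO(\alpha',Y',Z')}$ and thus nonempty interior. For a target in $\cO(\alpha_0)\cap Y'$ I would re-run the flip/rotation construction from the proof of Lemma~\ref{L-turbulent point}: there $\alpha_0$ was moved through its neighbourhood by the incremental flips $\hat\gamma_{\{i\}}$, each implemented by an approximately inner automorphism (using that every automorphism of $R$ is approximately inner) and decomposed into a norm-continuous path of small unitaries as in the path $W_t$ of Lemma~\ref{L-turbpoint}. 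Because $\alpha'$ coincides on $K'$ with $(\Ad u_\bullet)\circ\alpha_0$ with $u_s$ within $\varepsilon'$ of $\unit$, the same flips performed relative to $\alpha'$ differ from those for $\alpha_0$ only by the cocycle, which can be absorbed into the error budget; concatenating a short leg that undoes the cocycle through small approximately inner steps with the flip path of $\alpha_0$ yields a path from $\alpha'$ to within $\delta$ of the target, staying in $Y'$ with consecutive automorphisms in $Z'$.

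The main obstacle is precisely the passage from \emph{cocycle} conjugacy to a genuine local-orbit path: Ocneanu's theorem supplies a family $(\Ad u_s)_{s}$, not a single conjugating automorphism, and a cocycle perturbation moves one off the $\Aut(R)$-orbit of $\alpha_0$, so the turbulence of $\alpha_0$ cannot be transported by conjugation alone. The crux is therefore to show that the small cocycle can be implemented by approximately inner automorphisms in such a way that every intermediate action stays inside $Y'$ while the successive automorphisms stay inside $Z'$; this is exactly where the bound $\|u_s-\unit\|_2<\varepsilon'$ is indispensable, and where the careful tensor/flip bookkeeping of Lemma~\ref{L-turbulent point} must be reproduced with slightly degraded constants. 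Together with Lemma~\ref{L-meager} and Theorem~\ref{T-meager gen turb} this will give turbulence of the action on the free actions of a countably infinite amenable $G$.
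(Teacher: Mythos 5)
Your overall strategy matches the paper's: take the structured action of Lemma~\ref{L-turbulent point} with a free tensor factor as a free turbulent point $\beta$ with dense orbit, note that the turbulence property (in the strong form $Y_{\beta,K,\Omega,\varepsilon}\subseteq\overline{\cO(\beta,Y,Z)}$) is preserved under conjugation by any $\gamma\in\Aut(R)$, and then use Ocneanu's quantitative cocycle conjugacy to compare the given free $\alpha$ with a conjugate of $\beta$. You also correctly identify the crux: Ocneanu gives only a cocycle conjugacy $(\gamma\cdot\beta)_s=\Ad u_s\circ\alpha_s$ with $\|u_s-\unit\|_2$ small on a finite set, which moves you off the $\Aut(R)$-orbit, so turbulence cannot be transported by conjugation alone. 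However, your proposed resolution of this crux is where the gap lies. You suggest ``undoing the cocycle through small approximately inner steps'' and re-running the flip construction of Lemma~\ref{L-turbulent point} at the perturbed action. A cocycle perturbation $s\mapsto\Ad u_s\circ\alpha_s$ is not of the form $\rho\cdot\alpha$ for any single automorphism $\rho$, so there is no leg of the local orbit that ``undoes'' it; and re-running the flip construction at $\alpha'$ presupposes exactly the uniform control along the whole path that is at issue. As written, the crucial step is named but not actually carried out.

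The paper's mechanism is simpler and avoids implementing the cocycle altogether: the relation $(\gamma\cdot\beta)_s=\Ad u_s\circ\alpha_s$ is \emph{equivariant} under the $\Aut(R)$-action with preserved bounds, since for every $\rho\in\Aut(R)$ one has $(\rho\cdot(\gamma\cdot\beta))_s=\Ad\rho(u_s)\circ(\rho\cdot\alpha)_s$ and $\|\rho(u_s)-\unit\|_2=\|u_s-\unit\|_2$, whence
\[
\|(\rho\cdot(\gamma\cdot\beta))_s(a)-(\rho\cdot\alpha)_s(a)\|_2\leq 2\|u_s-\unit\|_2
\]
uniformly over all $\rho$ and all $a$ in the relevant finite set. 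Consequently, whatever finite chain of small automorphisms witnesses that a target $\theta$ lies in $\overline{\cO(\gamma\cdot\beta,Y_\delta,Z)}$ (which is available because $\gamma\cdot\beta$ inherits the turbulence property of $\beta$), the \emph{same} chain applied to $\alpha$ produces a point of $\cO(\alpha,Y,Z)$ within $\delta$ of $\theta$, after a routine adjustment of the neighbourhood parameters ($\varepsilon'<\varepsilon$, $\delta<(\varepsilon-\varepsilon')/2$). In other words, the two local orbits shadow each other termwise under identical group elements; no approximate innerness, no new path construction, and no absorption of the cocycle into the flip bookkeeping is needed. If you replace your third and fourth paragraphs with this shadowing argument, the proof closes.
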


\begin{proof}
By Lemma~\ref{L-turbulent point} there exists an action $\beta$ of $G$ on $R$ such
that for any finite set $K\subseteq G$ and $\Omega\subseteq R$ and
$\varepsilon>0$, the neighbourhood $Y = Y_{\beta, K, \Omega, \varepsilon}$ of $\beta$ is
contained in $\overline{\cO(\beta, Y, Z)}$, where $Z=Z_{\id, \Omega, \varepsilon}$.
Note that $G$ has a free action on $R$ by Lemma~\ref{L-free dense},
and so we may assume that $\beta$ is free by taking $\alpha_1$ in Lemma~\ref{L-turbulent point}
to be free, since the tensor product of a free action
with any other action is free \cite[Cor.\ 1.12]{Kallman}.
For a $\gamma \in \Aut(R)$ we have
$\gamma Y_{\beta, K, \Omega, \varepsilon}=Y_{\gamma\cdot\beta, K, \gamma\Omega , \varepsilon}$
and
$\gamma Z_{\id, \Omega, \varepsilon} \gamma^{-1}=Z_{\id, \gamma\Omega , \varepsilon}$,
and hence $\gamma \cO(\beta, Y_{\beta, K, \Omega, \varepsilon}, Z_{\id, \Omega, \varepsilon})=
\cO(\gamma\cdot\beta, Y_{\gamma\cdot\beta, K, \gamma \Omega, \varepsilon}, Z_{\id, \gamma \Omega, \varepsilon})$.
Thus $\gamma\cdot\beta$ also has the property according to which $\beta$ was chosen.

Now let $Y$ and $Z$ be neighbourhoods of $\alpha$ and $\id_R$ in $\Act(G, R)$ and $\Aut(R)$, respectively.
Shrinking $Y$ and $Z$ if necessary, we may assume that $Y=Y_{\alpha, K, \Omega, \varepsilon}$
and $Z=Z_{\id, \Omega, \varepsilon}$ for some $K$, $\Omega$, and $\varepsilon$ as above.
We claim that $Y\subseteq \overline{\cO(\alpha, Y, Z)}$. Let
$\theta\in Y$. Then $\theta \in Y_{\alpha, K, \Omega, \varepsilon'}$ for
some $0<\varepsilon'<\varepsilon$. Let $L$ be a finite subset of $G$ containing $K$, $\Upsilon$
a finite subset of $R$ containing $\Omega$, and $\delta\in (0, (\varepsilon-\varepsilon')/2)$.
By the theorem of Ocneanu in Section~1.4 of \cite{Ocneanu}, we can find unitaries $u_s\in R$ for $s\in L$
and $\gamma \in \Aut(R)$ such that $\| u_s-1\|_2<\delta/4$ and $(\gamma\cdot\beta)_s=\Ad u_s \circ \alpha_s$
for all $s\in L$. Note that
\begin{align*}
\| (\rho\cdot (\gamma\cdot \beta))_s(a)-(\rho\cdot \alpha)_s(a)\|_2 &=
\| \rho (u_s ) ((\rho\cdot\alpha)_s(a)) \rho(u_s)^*-(\rho\cdot \alpha)_s(a)\|_2 \\
&\leq 2\| \rho(u_s)-1\|_2
=2\| u_s-1\|_2 \\
&< \delta/2
\end{align*}
for all $s\in L$, $a\in \Upsilon$ and $\rho\in \Aut(R)$.
Set $Y_{\delta}=Y_{\gamma\cdot\beta, K, \Omega, \varepsilon'+\delta/2}$.
Then $\theta \in Y_{\delta}\subseteq \overline{\cO(\gamma\cdot\beta, Y_{\delta}, Z)}$.
Thus there exists an $\eta \in  \cO(\gamma\cdot\beta, Y_{\delta}, Z)\cap Y_{\theta, L, \Upsilon, \delta/2}$.
The above inequality shows that we can find a $\zeta\in \cO(\alpha, Y, Z)\cap Y_{\eta, L, \Upsilon, \delta/2}$.
Then $\zeta \in \cO(\alpha, Y, Z)\cap Y_{\theta, L, \Upsilon, \delta}$.
This proves the claim. Therefore $\alpha$ is a turbulent point.
\end{proof}

\begin{theorem}\label{T-amenable gen turb}
Suppose that $G$ is countably infinite and amenable. Then the action of $\Aut (R)$ on $\Fr (G,R)$ is turbulent.
\end{theorem}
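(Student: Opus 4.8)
The plan is to verify directly the three defining conditions of a turbulent action on the invariant subspace $\Fr (G,R)$---every orbit dense, every orbit meager, and every point turbulent---by assembling the lemmas already in hand, since each ingredient has essentially been isolated above. First I would record that $\Fr (G,R)$ is itself a Polish space, being a dense $G_\delta$ in $\Act (G,R)$ by Lemma~\ref{L-free dense}, and that it is $\Aut (R)$-invariant: proper outerness is preserved under conjugation, so $\gamma\cdot\alpha$ is free whenever $\alpha$ is, and hence an orbit of a free action stays inside $\Fr (G,R)$. Density of every orbit is then immediate from Lemma~\ref{L-free dense orbit}, which already asserts that every free action has dense orbit in $\Fr (G,R)$.

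For meagerness, Lemma~\ref{L-meager} gives that every $\Aut (R)$-orbit is meager in the ambient space $\Act (G,R)$; since $\Fr (G,R)$ is comeager there, I would invoke the standard fact that a set meager in a Baire space remains meager upon intersection with a dense subspace (the density of $\Fr (G,R)$ in $\Act (G,R)$ is exactly what makes nowhere-dense sets restrict to nowhere-dense ones), concluding that each orbit is meager in $\Fr (G,R)$ as well. For the turbulence of points, Lemma~\ref{L-free turbulent} supplies that every free $\alpha$ is a turbulent point for the action on the full space $\Act (G,R)$, and the remaining task is to transfer this to the restricted action.

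The only genuinely technical point---the one I would treat most carefully---is this passage from $\Act (G,R)$ to $\Fr (G,R)$ for the turbulent-point condition. Given a basic neighbourhood $U = Y\cap\Fr (G,R)$ of $\alpha$ with $Y$ open in $\Act (G,R)$, and a neighbourhood $V\subseteq\Aut (R)$ of $\id_R$, I would observe that because $\Fr (G,R)$ is invariant and contains $\alpha$, every intermediate point $g_k\cdots g_1\cdot\alpha$ occurring in the local-orbit condition automatically lies in $\Fr (G,R)$; hence $\cO (\alpha ,U,V)$ computed inside $\Fr (G,R)$ coincides with $\cO (\alpha ,Y,V)$ computed in $\Act (G,R)$. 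Lemma~\ref{L-free turbulent} then yields $\overline{\cO (\alpha ,Y,V)}\supseteq Y$, so the closure of $\cO (\alpha ,U,V)$ taken within $\Fr (G,R)$ contains $U$ and in particular has nonempty interior. Combining the three verifications gives turbulence of the action of $\Aut (R)$ on $\Fr (G,R)$.

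I do not expect a serious obstacle here, precisely because the hard analytic inputs have already been carried out earlier: Ocneanu's cocycle-conjugacy theorem underlies both the dense-orbit statement (Lemma~\ref{L-free dense orbit}) and the turbulent-point construction (Lemma~\ref{L-free turbulent}), while the CNT-entropy and disjointness machinery of Subsection~\ref{SS-ed} yields meagerness (Lemma~\ref{L-meager}). The residual work is thus the bookkeeping of descending to the invariant subspace $\Fr (G,R)$, for which density and invariance are the only facts actually needed.
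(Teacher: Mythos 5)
Your proof is correct and follows exactly the route the paper takes: its entire proof of this theorem is ``Combine Lemmas~\ref{L-free dense}, \ref{L-free dense orbit}, \ref{L-meager}, and \ref{L-free turbulent}.'' The extra bookkeeping you supply---invariance of $\Fr (G,R)$, meagerness passing to a dense subspace, and the local orbit in $\Fr (G,R)$ agreeing with the one in $\Act (G,R)$---is exactly the routine content the paper leaves implicit, and all of it checks out.
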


\begin{proof}
Combine Lemmas~\ref{L-free dense}, \ref{L-free dense orbit}, \ref{L-meager}, and \ref{L-free turbulent}.
\end{proof}

As a consequence of Theorem~3.21 and Corollary~3.19 in \cite{COER} and Theorem~\ref{T-amenable gen turb},
we obtain the following.

\begin{theorem}
Suppose that $G$ is countably infinite and amenable.
Then no $\Aut (R)$-invariant dense $G_\delta$ subset of $\Fr (G,R)$ admits classification by
countable structures.
\end{theorem}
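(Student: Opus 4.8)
The plan is to fix an arbitrary $\Aut(R)$-invariant dense $G_\delta$ subset $Y\subseteq\Fr(G,R)$ and show that the restriction of the conjugacy action to $Y$ is again turbulent, whence Corollary~3.19 of \cite{COER} will yield that conjugacy on $Y$ does not admit classification by countable structures. Since $\Fr(G,R)$ is Polish and $Y$ is a dense $G_\delta$ in it, $Y$ is itself a Polish $\Aut(R)$-space, so turbulence makes sense for the restricted action. By Theorem~\ref{T-amenable gen turb} the action on $\Fr(G,R)$ is turbulent, so every orbit there is dense and meager and every point is a turbulent point; the task is to transfer each of these three properties to $Y$, for which I would repeatedly use the subspace identity $\overline{S}^{\,Y}=\overline{S}^{\,\Fr(G,R)}\cap Y$ valid for $S\subseteq Y$.

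The density and meagerness of orbits transfer routinely. Any orbit $O\subseteq Y$ is dense in $\Fr(G,R)$, so $\overline{O}^{\,Y}=\Fr(G,R)\cap Y=Y$. For meagerness, a closed nowhere dense subset $F$ of $\Fr(G,R)$ meets the dense set $Y$ in a relatively closed set with empty interior in $Y$, so that $F\cap Y$ is nowhere dense in $Y$; writing a meager orbit of $\Fr(G,R)$ contained in $Y$ as a countable union of such sets shows it is meager in $Y$.

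The step I expect to be the main obstacle is checking that each $x\in Y$ remains a turbulent point for the restricted action. Here I would use the $\Aut(R)$-invariance of $Y$ crucially: given a basic relative neighbourhood $U\cap Y$ of $x$ (with $U$ open in $\Fr(G,R)$) and a neighbourhood $V$ of $\id_R$, every element of the local orbit $\cO(x,U,V)$ computed in $\Fr(G,R)$, together with all of its intermediate points $g_k\cdots g_1\cdot x$, lies in the orbit of $x$ and hence in $Y$; consequently $\cO^{Y}(x,U\cap Y,V)=\cO(x,U,V)$. Turbulence of $x$ in $\Fr(G,R)$ gives a nonempty open $W\subseteq\overline{\cO(x,U,V)}^{\,\Fr(G,R)}$, and then $W\cap Y$ is nonempty and open in $Y$ with $W\cap Y\subseteq\overline{\cO(x,U,V)}^{\,\Fr(G,R)}\cap Y=\overline{\cO^{Y}(x,U\cap Y,V)}^{\,Y}$, so the latter closure has nonempty interior in $Y$. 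Thus $x$ is a turbulent point of the restricted action.

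Having verified that the action of $\Aut(R)$ on $Y$ is turbulent, hence generically turbulent, I would conclude by Theorem~3.21 and Corollary~3.19 of \cite{COER} that the conjugacy relation on $Y$ is not classifiable by countable structures. As $Y$ was an arbitrary $\Aut(R)$-invariant dense $G_\delta$ subset of $\Fr(G,R)$, this establishes the theorem.
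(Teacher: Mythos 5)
Your proposal is correct and follows essentially the same route as the paper, which deduces the statement from Theorem~\ref{T-amenable gen turb} together with Theorem~3.21 and Corollary~3.19 of \cite{COER}. The only difference is that you verify directly (and correctly, using the invariance of $Y$ to identify the local orbits and the density of $Y$ to transfer nonempty interiors and nowhere density) the fact that turbulence passes to an invariant dense $G_\delta$ subset, rather than citing Hjorth's Theorem~3.21 for this transfer.
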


Note that when $G$ does not have property T the set of weakly mixing actions in $\Act (G,R)$
is a dense $G_\delta$ \cite{KP}, and so when $G$ is countably infinite and amenable
the free weakly mixing actions form an $\Aut (R)$-invariant dense $G_\delta$ subset of $\Fr (G,R)$
seeing that the latter is a dense $G_\delta$ in $\Act (G,R)$ by Lemma~\ref{L-free dense}.


\section{Flows and generic turbulence}

Let $(X, \mu )$ be a standard atomless probability space. The Polish space $\Act (G, X,\mu )$
of continuous $\mu$-preserving actions of a second countable locally compact group $G$ on $(X, \mu )$
with the weak topology can be identified with $\Act (G ,L^\infty (X,\mu ), \mu )$ as defined in Section~\ref{S-nonclassifiability}. The Polish group $\Aut (X,\mu )$ of $\mu$-preserving
transformations of $(X, \mu )$ can be viewed as $\Act (\Zb , X,\mu )$.
We write $\Erg (G, X,\mu )$ for the $G_\delta$ subset of ergodic actions in $\Act (G, X,\mu )$.
We say that an action $\alpha\in\Act (G , X,\mu )$ is {\it totally ergodic}
(resp.\ {\it totally weakly mixing}) if the single automorphism $\alpha_s$ is ergodic
(resp.\ weakly mixing) for every $s\in G\setminus\{ e \}$.

By \cite{RG} the set of weakly mixing actions in $\Act (\Rb, X,\mu )$
is a dense $G_\delta$. We have moreover the following.

\begin{lemma}\label{L-TWM}
The set of totally weakly mixing actions in $\Act (\Rb , X,\mu )$ is comeager.
\end{lemma}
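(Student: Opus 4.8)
The plan is to reduce total weak mixing of a flow to weak mixing of the flow itself, after which the statement becomes a restatement of the cited fact that the weakly mixing actions in $\Act(\Rb,X,\mu)$ form a dense $G_\delta$. Fix $\alpha\in\Act(\Rb,X,\mu)$ and let $\kappa_{\alpha,0}$ denote the Koopman representation of $\Rb$ on $L^2(X,\mu)\ominus\Cb\unit$. By Stone's theorem this one-parameter unitary group can be written as $\kappa_{\alpha,0}(t)=\int_\Rb e^{i\lambda t}\,dE(\lambda)$ for a unique projection-valued spectral measure $E$ on $\Rb$, and $\alpha$ is weakly mixing (in the sense of $\kappa_{\alpha,0}$ having no nonzero finite-dimensional, equivalently no nonzero one-dimensional, subrepresentation) precisely when $E$ has no atoms.

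The key step is a spectral characterization of weak mixing of the individual transformations $\alpha_s$. For $s\neq 0$ the operator $U_s=\kappa_{\alpha,0}(s)=\int_\Rb e^{i\lambda s}\,dE(\lambda)$ has a nonzero eigenvector with eigenvalue $\mu$ if and only if $E$ is nonzero on the level set $\{\lambda\in\Rb:e^{i\lambda s}=\mu\}$. I would observe that this set is the arithmetic progression $\{(\arg\mu+2\pi k)/s:k\in\Zb\}$ and hence countable, so that an atomless $E$ assigns it the zero projection; conversely, if $E$ has an atom at some $\lambda_0$ then $U_s$ has the eigenvalue $e^{i\lambda_0 s}$ with eigenvector any vector in the range of $E(\{\lambda_0\})$. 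Thus $\alpha_s$ fails to be weakly mixing for some (equivalently every) $s\neq 0$ exactly when $E$ has an atom, so that the totally weakly mixing actions coincide precisely with the weakly mixing flows.

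Having identified the two sets, I would conclude by invoking the result of \cite{RG} quoted above: the weakly mixing actions in $\Act(\Rb,X,\mu)$ form a dense $G_\delta$, and every dense $G_\delta$ is comeager. The main obstacle is the spectral bookkeeping of the middle paragraph, namely the verification that a non-atomic projection-valued measure vanishes on the countable level sets $\{\lambda:e^{i\lambda s}=\mu\}$ while a single atom produces eigenvectors for every time-$s$ map; this is what collapses the a priori uncountable intersection $\bigcap_{s\neq 0}\{\alpha:\alpha_s\text{ weakly mixing}\}$ onto a single comeager set. Everything else is either definitional or a direct appeal to the cited density statement, so once this equivalence is in place the comeagerness follows at once.
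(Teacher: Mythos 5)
Your proof is correct, but it takes a genuinely different route from the paper. The paper never identifies the totally weakly mixing flows with the weakly mixing flows; instead it builds explicit open sets $W_n$ of flows satisfying a correlation-decay condition $|\mu(\alpha_t(A)\cap B)-\mu(A)\mu(B)|<\varepsilon$ for all $t$ in a long window $[r,r+n]$, proves each $W_n$ is dense by perturbing an arbitrary flow toward a mixing one (following the proof of Theorem~4.2 in \cite{KP}), and checks that every flow in $\bigcap_n W_n$ is totally weakly mixing via the characterization of weak mixing in \cite{BerRos}. Your argument instead observes, via Stone's theorem and the spectral mapping for $U_s=\int e^{i\lambda s}\,dE(\lambda)$, that an atomless spectral measure kills the countable level sets $\{\lambda: e^{i\lambda s}=\mu\}$ while an atom at $\lambda_0$ gives every time-$s$ map the eigenvalue $e^{i\lambda_0 s}$; hence the totally weakly mixing flows coincide \emph{exactly} with the weakly mixing flows, and the lemma reduces to the Rokhlin--Gurevi\v{c} density statement already quoted just before it. Your spectral reduction is shorter and shows the lemma is not actually a strengthening of the \cite{RG} fact (despite the paper's ``moreover''), whereas the paper's construction is self-contained modulo \cite{KP}, avoids the spectral theorem, and runs parallel to the remark at the end of the section about substituting ergodicity at nonzero integer times. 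Both proofs are sound.
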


\begin{proof}
Take an increasing sequence $\cP_1 \leq \cP_2 \leq\dots$ of finite measurable partitions
of $X$ whose union generates a dense subalgebra of the measure algebra. For each $n\in\Nb$ write
$W_n$ for the set of all actions $\alpha\in\Act (\Rb , X,\mu )$ such that there exists a
real number $r\geq n$ for which
$| \mu (\alpha_t (A)\cap B) - \mu(A)\mu(B) | < \varepsilon$
for all $A,B\in\cP$ and $t\in [r,r+n]$.
If we take $\beta$ in the proof of Theorem~4.2 in \cite{KP} to be a mixing
action (such as the Gaussian action associated to the left regular representation of $\Rb$),
then the argument there shows that we can approximate any action in $\Act (\Rb , X,\mu )$
with one in $W_n$. Thus $\bigcap_{n=1}^\infty W_n$ is a dense $G_\delta$, and it
consists entirely of totally weakly mixing actions, as is easy to see using the
characterization of weak mixing given by Corollary~1.6 of \cite{BerRos}.
\end{proof}

\begin{theorem}
The action of $\Aut (X,\mu )$ on $\Erg (\Rb , X,\mu )$ by conjugation is generically turbulent.
\end{theorem}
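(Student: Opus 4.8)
The plan is to verify the three defining conditions of generic turbulence for the conjugation action of $\Aut(X,\mu)$ on $\Erg(\Rb,X,\mu)$—existence of a dense orbit, meagerness of every orbit, and existence of a turbulent point—by first establishing generic turbulence on the whole space $\Act(\Rb,X,\mu)$ and then descending to the invariant subspace of ergodic flows. The starting observation is that, by \cite{RG} and Lemma~\ref{L-TWM}, the totally weakly mixing flows form a comeager subset of $\Act(\Rb,X,\mu)$; since such flows are ergodic, $\Erg(\Rb,X,\mu)$ is a conjugation-invariant dense $G_\delta$, hence a Polish space in its own right. The technical engine is a commutative analogue of Lemma~\ref{L-turbulent point}: if $\{\alpha_i\}_{i=1}^\infty$ is dense in $\Act(\Rb,X,\mu)$ and $\alpha$ is conjugate to the product flow $\prod_{i=1}^\infty(\alpha_i\times\iota)$, where $\iota$ is the trivial $\Rb$-flow on an auxiliary atomless factor, then $\alpha$ is a turbulent point with dense orbit. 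The argument would run exactly as in the proof of Lemma~\ref{L-turbulent point}, with the rotation-and-flip construction transported to the measure algebra: the flip of two coordinates of $X\times X$ replaces the tensor flip on $R\overline{\otimes}R$, the trivial factors supply the room needed to interpolate toward a target $\theta$, and the approximate innerness of automorphisms of $R$ is replaced by weak approximation of measure-preserving transformations. Note that, because of the trivial factors, this $\alpha$ need \emph{not} be ergodic.

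Next I would establish that every orbit in $\Act(\Rb,X,\mu)$ is meager, following the entropy-and-disjointness scheme of Section~5 (and of \cite{FW}) in its commutative, nondiscrete form. The ingredients are: the set of zero-entropy flows is a dense $G_\delta$ (the $G_\delta$ property from upper semicontinuity of the Kolmogorov--Sinai entropy of the time-one map, the density from the fact that the conjugacy class of a free flow is dense); a zero-entropy flow is disjoint from every completely positive entropy flow, which is the flow version of del Junco's result \cite{dJ} proved as in Lemma~\ref{L-entropy disjoint}; and, by the commutative analogue of Lemma~\ref{L-disjoint}, the set of flows disjoint from a fixed $\alpha$ is a $G_\delta$. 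As in Lemma~\ref{L-meager}, a non-meager orbit would force $\alpha$ to have zero entropy, whence the dense $G_\delta$ set of flows disjoint from $\alpha$ would be disjoint from the entire conjugation-invariant orbit of $\alpha$, contradicting comeagerness. Combined with the previous paragraph, this shows that the action on $\Act(\Rb,X,\mu)$ is generically turbulent.

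Finally I would descend to $\Erg(\Rb,X,\mu)$. By Theorem~3.21 of \cite{COER}, generic turbulence on $\Act(\Rb,X,\mu)$ produces an invariant dense $G_\delta$ on which the action is turbulent, so the turbulent points and the points with dense orbit are each comeager. Intersecting these comeager sets with the comeager set of totally weakly mixing flows furnished by Lemma~\ref{L-TWM} yields an ergodic flow that is simultaneously a turbulent point and has dense orbit. Since $\Erg(\Rb,X,\mu)$ is an invariant dense $G_\delta$, meagerness of orbits is inherited; an orbit contained in $\Erg(\Rb,X,\mu)$ that is dense in $\Act(\Rb,X,\mu)$ is dense in $\Erg(\Rb,X,\mu)$; and turbulence of a point of $\Erg(\Rb,X,\mu)$ transfers, because by invariance the local orbits of such a point already lie in $\Erg(\Rb,X,\mu)$ and the relevant closures and interiors change only across the comeager set $\Erg(\Rb,X,\mu)$. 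This yields the three conditions and hence generic turbulence on $\Erg(\Rb,X,\mu)$.

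The main obstacle I anticipate is reconciling the turbulent point with ergodicity: the natural construction $\prod_i(\alpha_i\times\iota)$ needed for density of the orbit has trivial tensor factors and is therefore non-ergodic, so one cannot simply place a single turbulent point inside $\Erg(\Rb,X,\mu)$. The crux is thus the descent argument of the last paragraph—upgrading the comeager supply of turbulent and dense-orbit points to an ergodic witness via Lemma~\ref{L-TWM}, and checking that turbulence, density, and meagerness all pass to the invariant dense $G_\delta$ of ergodic flows. The secondary technical point, mostly routine given Section~5, is carrying out the entropy–disjointness meagerness argument for the nondiscrete group $\Rb$, where one works with the time-one map and the density of conjugacy classes of free flows rather than with the discrete Rokhlin-type tools of \cite{FW}.
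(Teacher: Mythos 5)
Your overall architecture---construct a turbulent point with dense orbit in $\Act(\Rb,X,\mu)$ via the commutative analogue of Lemma~\ref{L-turbulent point}, show every orbit in $\Act(\Rb,X,\mu)$ is meager, then descend to the invariant dense $G_\delta$ set $\Erg(\Rb,X,\mu)$ via Theorem~3.21 of \cite{COER}---matches the paper on the first and third steps, but your meagerness argument takes a genuinely different route. The paper uses no entropy for flows at all: by Lemma~\ref{L-TWM} one may assume $\alpha$ is totally ergodic (the non-totally-ergodic flows form an invariant meager set); periodic flows are disjoint from totally ergodic flows, since a joining yields an equivariant u.c.p.\ map $\varphi$ with $\alpha_p\varphi(f)=\varphi(\beta_p f)=\varphi(f)$, which forces $\varphi$ to be scalar-valued by ergodicity of $\alpha_p$; periodic flows are dense by the Rokhlin lemma for flows \cite[Lemma 11.1]{ETRDS}; and $\alpha^\perp$ is a $G_\delta$ by Lemma~\ref{L-disjoint}, hence a dense $G_\delta$ avoiding the orbit of $\alpha$. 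Your Foreman--Weiss-style scheme should also work, but it is heavier: you must establish the $G_\delta$-ness of zero entropy for flows (note it is $h(\cdot,P)$ for a fixed partition $P$, not $h$ itself, that is upper semicontinuous), the disjointness of zero-entropy flows from K-flows (this is Furstenberg's, not del Junco's, whose contribution is the $G_\delta$-ness of $\alpha^\perp$), and, crucially, a conjugacy lemma for free flows giving density of the conjugacy class of a Bernoulli flow. This last ingredient plays the role that Ocneanu's theorem plays for $R$ and is precisely what the periodic-versus-totally-ergodic trick is designed to avoid: the periodic flows are dense directly from the Rokhlin lemma, but they cannot serve in your scheme since they themselves have zero entropy. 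Your resolution of the ergodicity issue for the turbulent point---intersecting the comeager sets of turbulent points, dense-orbit points, and totally weakly mixing flows---is a spelled-out version of the paper's one-line appeal to Theorem~3.21 of \cite{COER} and is correct.
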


\begin{proof}
By \cite{RG} $\Erg (\Rb , X,\mu )$ is a dense $G_\delta$ in $\Act (\Rb , X,\mu )$,
and so by \cite[Thm.\ 3.21]{COER} it suffices to prove that the action of $\Aut (X,\mu )$
on $\Act (\Rb , X,\mu )$ is generically turbulent. Notice that the proof of
Lemma~\ref{L-turbulent point} works mutatis mutandis with $R$ replaced by $L^\infty (X,\mu )$,
the main difference being that the approximation of $\gamma_E$ by an inner automorphism
should be replaced by taking a tensor product of $\gamma_E$ with the identity automorphism.
Consequently there exists a turbulent point in $\Act (\Rb , X,\mu )$ with dense orbit. It thus
remains to verify that every orbit in $\Act (\Rb , X,\mu )$ is meager. So let $\alpha\in\Act (\Rb , X,\mu )$
and let us show that the orbit of $\alpha$ is meager. By Lemma~\ref{L-TWM} we may assume that
$\alpha$ is totally ergodic. Note that periodic flows are disjoint from totally ergodic
flows, as is easy to see by viewing joinings as unital completely positive maps according
to the discussion after Definition~\ref{D-disjoint}. Since the periodic flows are dense
in $\Act (\Rb , X,\mu )$ by the Rokhlin lemma for flows \cite[Lemma 11.1]{ETRDS},
we deduce in view of Lemma~\ref{L-disjoint} that the set
of actions in $\Act (\Rb , X,\mu )$ which are disjoint from $\alpha$ is a dense $G_\delta$,
so that the orbit of $\alpha$ is meager, as desired.
\end{proof}

In the above argument one can also use as a substitute for Lemma~\ref{L-TWM} the fact
that the set of flows which are ergodic at nonzero integral times is a dense $G_\delta$, which
can be deduced from the Rokhlin lemma for flows.

\end{document}